\newcounter{mythm}[section]
\newtheorem{theorem}{Theorem}[section]
\newtheorem{lemma}[theorem]{Lemma}
\newtheorem{proposition}[theorem]{Proposition}
\newtheorem{corollary}[theorem]{Corollary}
\newtheorem{lf}{Lyapunov Function}
\newtheorem{claim}{Claim}
\theoremstyle{definition}
\newtheorem{definition}[mythm]{Definition}
\newtheorem{patch}{Patch}
\newtheorem{remark}{Remark}
\numberwithin{equation}{section}
\begin{document} 
\title{Transition from ergodic to explosive behavior in a family of stochastic differential equations}

\author{Jeremiah Birrell$^{a, 1}$, David P. Herzog$^{b, 2}$, Jan Wehr$^{b, 3}$ \\
\vspace{-.1in}
\scriptsize{$^{a}$Program in Applied Mathematics, The University of Arizona}\\ 
\scriptsize{617 N. Santa Rita Ave., P.O. Box 210089, Tucson, AZ 85721-0089, USA}\\
\vspace{-.1in}
\scriptsize{$^{b}$Department of Mathematics, The University of Arizona}\\
\scriptsize{617 N. Santa Rita Ave., P.O. Box 210089, Tucson, AZ 85721-0089, USA}\\
\vspace{-.1in}
\scriptsize{$^{1}$Corresponding author, (email) jbirrell@math.arizona.edu, (tel) 520-621-1963, (fax) 520-626-5048}\\ 
\scriptsize{$^{2}$(email) dherzog@math.arizona.edu, $^{3}$(email) wehr@math.arizona.edu} }

\maketitle

\begin{abstract}
We study a family of quadratic stochastic differential equations in the plane, motivated by applications to turbulent transport of heavy particles.  Using Lyapunov functions, we find a critical parameter value $\alpha_{1}=\alpha_{2}$ such that when $\alpha_{2}>\alpha_{1}$ the system is ergodic and when $\alpha_{2}<\alpha_{1}$ solutions are not defined for all times.  H\"{o}rmander's hypoellipticity theorem and geometric control theory are also utilized. 
\end{abstract}

\noindent \footnotesize{\textbf{Key Words:}  Ergodic Property, Stochastic Differential Equations, Degenerate Noise, Invariant (Probability) Measures, Geometric Control Theory, Lyapunov Functions.  }

\section{Introduction}

\normalsize
Understanding the ergodic behavior of both deterministic and random dynamical systems is of paramount importance in applications.  This is because many natural phenomena are either in steady state or close to it; thus the long-time dynamics often reflects most accurately the situation at hand.  Proving or disproving ergodicity in general, however, remains a challenge.  In this work
we discuss useful methods, as applied to a specific family of stochastic differential equations. 


As in \cite{HAS, MTIII, RB}, our main methodology is the use of Lyapunov functions.  These are widely utilized in the literature, e.g. \cite{EM, MSH, SM}, to study the behavior of diffusion processes.  It is well known that finding a Lyapunov function to verify global stability can be highly nontrivial.  We aim at a systematic approach to this problem by introducing the notion of a \emph{Lyapunov covering}.  This method consists of identifying the regions which the process must exit prior to leaving the state space.                          





Moreover, we hope that our use of geometric control theory \cite{AK, IW, JK, JK1, K2, SV}   
will elucidate the study of stochastic differential equations with degenerate noise.  One class of such systems arises by writing a second-order equation in first-order form \cite{GHW, MSH, RB}.  In such equations, noise is effectively transferred through the average dynamics so that the support of the diffusion for all positive times is the whole space.  In view of the support theorem \cite{SV}, this is proven by explicitly solving the associated control problem.  In the cases treated here, it is more convenient to use geometric methods to uncover the accessibility sets of the control system.  We find that, due to the even (quadratic) order of the coefficients of the average dynamics, noise only spreads to certain strict subsets of the state space.


In what follows, let $a_{1},\, a_{2}\in \mathbb{R}$, $\alpha_{1},\, \alpha_{2} > 0$ and $\kappa_{1}\geq 0$, $\kappa_{2}>0$.  We study the stochastic differential equation in $\mathbb{R}^{2}$:      
\begin{eqnarray}\label{initial_system}
dX_t&=&(a_1X_t-\alpha_1 X_t^2+Y_t^2)\, dt+\sqrt{2\kappa_1}\, dB^{(1)}_t \\
dY_t&=&(a_2Y_t-\alpha_2 X_tY_t) \, dt +\sqrt{2\kappa_2} \, dB^{(2)}_t, \nonumber
\end{eqnarray}
where $B^{(1)}_{t}$ and $B^{(2)}_{t}$ are independent standard Brownian motions.
Consider a two-dimensional spatially smooth turbulent flow.  It was noted in \cite{BCH} that in our system with $a_{1}=a_{2}=-1$ and $\alpha_{1}=1$, $\alpha_{2}=2$, $X_{t}$ and $Y_{t}$ model the transverse and longitudinal components of the velocity difference of two heavy particles transported by the flow.  Ergodicity of this equation was assumed in \cite{BCH} to extract information on particle clustering as expressed through the top Lyapunov exponent.  Subsequently, ergodicity was proven in \cite{GHW}.     Here, among other results, we provide an alternate proof of this fact. 

It was also noted in \cite{BCH, GHW} that, in this special case, equation \eqref{initial_system} can be written succinctly in the complex variable $Z_{t}=X_{t}+iY_{t}$ as:
\begin{eqnarray}\label{complex_system}
dZ_{t} &=& (-Z_{t} -Z_{t}^{2}) \, dt + \sqrt{2\kappa_{1}} \, dB^{(1)}_{t} + i \sqrt{2\kappa_{2}} \, dB^{(2)}_{t}.
\end{eqnarray}  
Here we show that ergodicity of equation \eqref{initial_system} holds whenever $\alpha_{2}>\alpha_{1}$; thus stability in \eqref{complex_system} above is not due to its holomorphic structure.  We shall see that this result is optimal in the sense that when $\alpha_{1}>\alpha_{2}$, there are solutions which reach infinity in finite time with positive probability.  It should be noted that the choice $\kappa_{1} \geq 0$, $\kappa_{2}>0$ is optimal as well.  For fixed $a_{1},\, a_{2}\in \mathbb{R}$ and $\alpha_{1}, \alpha_{2} > 0$, if $\kappa_{1} \geq 0,$ $\kappa_{2} =0$, with probability one equation \eqref{initial_system} has solutions which explode to infinity in finite time.  This is a simple consequence of Feller's test \cite{DR} applied to solutions starting along the negative real axis, hence we omit further discussion.

By working in this generality, our goal is to make progress in understanding turbulent transport of heavy particles in spatially rough flows as well.  In a two-dimensional flow with spatial H\"{o}lder exponent $h\in(0,1)$, the components, $X_t$ and $Y_t$, of the velocity difference now obey:   
\begin{eqnarray}\label{turbdyn}
dX_t &=&(-X_t - R_t^{-1}(h X_t^{2}-Y_t^{2}) )\, dt + \sqrt{2\kappa_1}\, dB^{(1)}_t\\
dY_t&=& (-Y_t -(1+h) R_t^{-1} X_t Y_t)\, dt + \sqrt{2\kappa_{2}}\, dB^{(2)}_t \nonumber\\
dR_t&=& (1-h) R_t \, dt \nonumber,
\end{eqnarray}  
where $R_t$ is proportional to $|S_t|^{1-h}$ where $S_t$ is the particle separation at time $t\geq 0$ \cite{BCH1}.  For simplicity, to gain insight into the dynamics \eqref{turbdyn} we assume $R_t$ is a positive constant; in which case, we see that equation \eqref{turbdyn} falls within the class \eqref{initial_system}.


The organization of the paper is as follows.  In Section \ref{results}, we fix notation, state the main results, and outline our methods of proof.  In Section \ref{lyapunov}, we show that when $\alpha_{2}>\alpha_{1}$, the process $(X_{t}, Y_{t})$ is nonexplosive and that equation \eqref{initial_system} has (at least one) invariant probability measure.  In Section \ref{positivity}, we prove uniqueness of the invariant probability measure using a combination of H\"{o}rmander's theorem and geometric control theory arguments.  In Section \ref{explosion}, we prove that when $\alpha_{1}>\alpha_{2}$, the process $(X_{t}, Y_{t})$ with initial conditions in a certain region goes to infinity in finite time with positive probability.


\section{Main Results}\label{results}


Let us first fix notation and terminology.  Throughout the paper $\mathcal{B}$ denotes the Borel $\sigma$-field of subsets on $\mathbb{R}^{2}$.  We assume the standard Brownian motions, $B^{(1)}_{t}$ and $B^{(2)}_{t}$, are defined on a common probability triple $(\Omega, \mathcal{F}, P)$.  $Z_{t}$ denotes the two-dimensional process $(X_{t}, Y_{t})$ defined by equation \eqref{initial_system}. $Z_{t}$ is defined on $(\Omega, \mathcal{F}, P)$ until the random time at which it leaves $\mathbb{R}^{2}$.  More formally, for $n\in \mathbb{N}$, if we define
\begin{eqnarray*}
\tau_{n} := \inf_{t>0} \{|Z_{t}| \geq n \},
\end{eqnarray*}
and let $\tau$ be the finite or infinite limit of $\tau_{n}$ as $n\rightarrow \infty$, $Z_{t}$ is defined on $(\Omega, \mathcal{F}, P)$ for all times $t \wedge \tau$, $t\geq 0$ (where here and in what follows $a \wedge b$ denotes the minimum of $a$ and $b$).  To emphasize that both $Z_{t}$ and $\tau$ depend on $Z_{0}=z\in \mathbb{R}^{2}$, the superscript notation $Z_{t}^{z}$ and $\tau^{z}$ is sometimes used \cite{HAS}.  We opt instead to account for the dependence on $z$ in the measure via $P_{z}$ and $E_{z}$.  

Using the notation above, $Z_{t\wedge \tau}$ is a Markov process with transition kernel
\begin{eqnarray*}
P(t, z, A):=P_{z}\left\{Z_{t\wedge \tau}\in A \right\}, 
\end{eqnarray*}
for $t\geq 0$, $z\in \mathbb{R}^{2}$, and $A\in \mathcal{B}$.  The measures $P(t, z, \, \cdot\, )$ give rise to a semigroup $\{P_{t}\}_{t\geq 0}$ which acts on real-valued bounded $\mathcal{B}$-measurable functions $\Phi$ via
\begin{eqnarray*}
P_{t} \Phi(z) = \int_{\mathbb{R}^{2}} P(t, z, dw)\Phi(w)
\end{eqnarray*}  
and on finite $\mathcal{B}$-measures $\mu$ via
\begin{eqnarray*}
\mu P_{t} (A) = \int_{\mathbb{R}^{2}} \mu(dw) P(t, w, A),
\end{eqnarray*}
for $A\in \mathcal{B}$.  The $\mathcal{B}$-measure $\mu$ is called an \emph{invariant measure} if for all $A\in \mathcal{B}$ and $t\geq 0$ 
\begin{eqnarray*}\label{invm}
\mu P_{t}(A)= \mu(A).  
\end{eqnarray*}
A positive invariant measure $\mu$ can be normalized to have total mass one; in which case, the resulting measure $\nu$ is a probability measure that satisfies $\nu P_{t}(A)=\nu(A)$ for all $A\in \mathcal{B}$, $t\geq 0$.  We call $\nu$ an \emph{invariant probability measure}.  It is well known that invariant probability measures represent the long-time behavior of the process $Z_{t\wedge \tau}$.  For a nice discussion of this, see \cite{RB}.       

Let 
\begin{eqnarray}\label{generator}
L = (a_{1}x-\alpha_{1}x^{2} + y^{2}) \frac{\partial}{\partial x} + (a_{2}y-\alpha_{2} xy) \frac{\partial}{\partial y}+ \kappa_{1} \frac{\partial^{2}}{\partial x^{2}} + \kappa_{2} \frac{\partial^{2}}{\partial y^{2}}.
\end{eqnarray}
For a suitable class of functions, e.g. $C^{2}$-functions $\Phi:\mathbb{R}^{2}\rightarrow \mathbb{R}$, for $n\in \mathbb{N}$ we have Dynkin's formula:
\begin{eqnarray*}
E_{z}\left[ \Phi(Z_{t\wedge \tau_{n}})\right]- \Phi(z)= E_{z} \left[ \int_{0}^{t\wedge \tau_{n}} L\Phi(Z_{s}) \, ds \right].   
\end{eqnarray*}
We call $L$ the \emph{generator} of the Markov process $Z_{t\wedge \tau}$.

We now state our main results.  The first says, provided $\alpha_{2}>\alpha_{1}$, $\tau = \infty$ almost surely regardless of the initial condition $z\in \mathbb{R}^{2}$.  Moreover, the Markov process $Z_{t}$, which is now defined for all finite times $t\geq 0$ and for all $Z_{0}=z\in \mathbb{R}^{2}$, has a unique invariant probability measure.  The second result shows that, in sharp contrast to the above, when $\alpha_{2}<\alpha_{1}$, there exists a non-empty set $A$ of initial conditions for which the process $Z_{t}$ with $Z_{0}=z\in A$ explodes with positive probability in finite time, i.e., $P_{z}\left\{\tau < \infty \right\}>0$ for $z\in A$.  The set $A$ will be explicitly described in Section \ref{explosion}.      


\begin{theorem}\label{theorem1}
If $\alpha_{2}>\alpha_{1}$, then:
\begin{description}
\item [(1)] For all $z\in \mathbb{R}^{2}$, 
\begin{eqnarray*}
P_{z}\left\{ \tau < \infty \right\}=0.  
\end{eqnarray*}  \noindent Thus for all $z\in \mathbb{R}^{2}$, $P_{z}\left\{Z_{t}=Z_{t\wedge \tau} \text{ for all } t\geq 0\right\}=1$.   
\item [(2)] There exists a unique invariant probability measure $\nu$. 
\end{description}
\end{theorem}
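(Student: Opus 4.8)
The plan is to prove the two parts separately, following the standard Hasminskii-type program for ergodicity of diffusions.

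For part \textbf{(1)}, the non-explosion claim, I would construct an explicit Lyapunov function $V:\mathbb{R}^{2}\to[1,\infty)$ that is $C^{2}$, proper (i.e. $V(z)\to\infty$ as $|z|\to\infty$), and satisfies $LV \le c V$ for some constant $c>0$, with $L$ the generator in \eqref{generator}. Given the quadratic structure and the sign condition $\alpha_{2}>\alpha_{1}$, a natural first guess is something like $V(x,y) = (1 + \gamma x^{2} + y^{2})^{p}$ or a suitable polynomial/exponential combination; the exponents and the coefficient $\gamma$ must be tuned so that the dangerous cross term $y^{2}\,\partial_x$ coming from the $Y_t^2$ forcing in the $X$-equation is dominated. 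The point is that along directions where $Y_t$ is large, $X_t$ is pushed in the $+x$ direction, but then the $-\alpha_{1}X_t^2$ drift in $X$ and, crucially, the $-\alpha_{2}X_tY_t$ drift in $Y$ pull the system back precisely when $\alpha_{2}>\alpha_{1}$; the Lyapunov function must encode this balance. Once $LV\le cV$ holds, the standard argument (apply Dynkin's formula to $e^{-ct}V(Z_{t\wedge\tau_n})$, take expectations, let $n\to\infty$ and use Fatou/Gronwall) gives $E_z[V(Z_{t\wedge\tau})]<\infty$ and forces $P_z\{\tau<\infty\}=0$. I expect the design of $V$ — choosing the form and verifying the differential inequality by completing squares in the quadratic terms — to be the main obstacle, and this is presumably where the ``Lyapunov covering'' idea mentioned in the introduction is used to stitch together estimates valid in different regions.

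For the existence half of part \textbf{(2)}, non-explosion together with a Lyapunov function satisfying a stronger drift condition — ideally $LV \le -\beta V + b$ or at least $LV \le -c$ outside a compact set — yields tightness of the Cesàro averages $\frac{1}{T}\int_0^T P(t,z,\cdot)\,dt$ via the Krylov--Bogolyubov argument, and any weak-* limit point is an invariant probability measure. If the first Lyapunov function only gives $LV\le cV$, I would refine it (or take a different power) to obtain the needed coercivity; again the quadratic algebra is the work, but it is routine once the right ansatz is in hand. One then invokes the Feller property of the semigroup $P_t$ (which follows from smoothness of coefficients and non-explosion) to ensure the limit measure is genuinely invariant.

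For uniqueness in part \textbf{(2)}, I would show the process has a unique invariant measure by establishing that the transition kernels $P(t,z,\cdot)$ are mutually equivalent, or at least that any two invariant measures cannot be mutually singular. The route advertised in the introduction is: first apply H\"{o}rmander's bracket condition to the vector fields $X_0 = (a_1x-\alpha_1x^2+y^2)\partial_x + (a_2y-\alpha_2xy)\partial_y$, $X_1=\sqrt{2\kappa_1}\,\partial_x$ (if $\kappa_1>0$; and handle the $\kappa_1=0$ case by computing iterated Lie brackets of $X_0$ with $X_2=\sqrt{2\kappa_2}\,\partial_y$), checking that the Lie algebra they generate spans $\mathbb{R}^{2}$ at every point, so that $P(t,z,\cdot)$ has a smooth density. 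Then use geometric control theory / the support theorem to identify the accessibility set of the associated deterministic control system and show it has nonempty interior (indeed, that from any point one can steer into a common open set); combined with the smooth positive density this gives a reachable point $z_\ast$ with an open neighborhood charged by every $P(t,z,\cdot)$, which is the classical Doob/Khasminskii criterion forcing uniqueness (and, incidentally, ergodicity). The main subtlety here will be the control-theoretic step: because the drift is quadratic (even order), as the authors warn, the accessibility set is only a strict subset of the plane, so one must carefully verify that this subset is still large enough — has nonempty interior and is reached from all initial points — for the uniqueness argument to close.
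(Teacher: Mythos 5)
Your overall program---a Lyapunov function satisfying $L\Phi\le -C\Phi+D$ for non-explosion and existence, then H\"ormander hypoellipticity plus geometric control theory for uniqueness---is exactly the route the paper takes, via Lemma~\ref{lem1} and Lemma~\ref{lem2}.

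The genuine gap is in the Lyapunov construction itself. Your candidate $V(x,y)=(1+\gamma x^{2}+y^{2})^{p}$ (and indeed any simple radial, polynomial, or exponential ansatz) fails along the negative $x$-axis, and notice that the condition $\alpha_{2}>\alpha_{1}$ never even enters your estimate. For $x<0$, $y=0$, and $|x|$ large one has $\partial_{x}V\sim 2p\gamma^{p}x|x|^{2p-2}<0$, so the drift contributes $(-\alpha_{1}x^{2})\partial_{x}V\sim 2p\alpha_{1}\gamma^{p}|x|^{2p+1}>0$, while the only helpful diffusive term $\kappa_{2}\partial_{yy}V\sim 2p\kappa_{2}\gamma^{p-1}|x|^{2p-2}$ is three powers of $|x|$ smaller; hence $LV/V\to+\infty$ there for every $p>0$, and no tuning of $\gamma$ or $p$ closes the gap. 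This is precisely the obstruction the paper identifies at the start of Section~\ref{lyapunov}: the negative $x$-axis is an unstable trajectory of the deterministic flow, so the Lyapunov function must be engineered to borrow stability from the noise term $\kappa_{2}\partial_{yy}$ near that axis. The paper's key piece is $\phi_{1}(x,y)=C_{1}\bigl(5|x|^{\beta}-y^{2}|x|^{\beta+1}\bigr)$ on the thin parabolic tube $U_{1}=\{x<-2,\ |y|<2|x|^{-1/2}\}$, with $\beta\propto\kappa_{2}/\alpha_{1}$ chosen small so that the drift contribution $+5\alpha_{1}\beta C_{1}|x|^{\beta+1}$ is beaten by the diffusive contribution $-2\kappa_{2}C_{1}|x|^{\beta+1}$ produced by the deliberately placed $-y^{2}|x|^{\beta+1}$ term---a same-power competition that a radial ansatz cannot arrange. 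The condition $\alpha_{2}>\alpha_{1}$ only enters in the adjacent wedge piece $\phi_{2}$ through the sign of $\alpha_{1}-\sigma\alpha_{2}$, and three further local functions plus a nontrivial smooth patching (Stage~2) are needed to assemble a global $\Phi$. You correctly guess that a covering/patching construction is the intended device, but without the specific form of $\phi_{1}$ and its companions your plan does not get off the ground, and constructing them is essentially all the work of Section~\ref{lyapunov}. The H\"ormander bracket computation and the accessibility-set argument you sketch for uniqueness track Section~\ref{positivity} closely and are fine.
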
    

\begin{theorem}\label{theorem2}
If $\alpha_{1}>\alpha_{2}$, then there exists $\emptyset \neq A \subset \mathbb{R}^{2}$ such that for all $z\in A$, 
\begin{eqnarray*}
P_{z}\left\{\tau <\infty \right\}>0.
\end{eqnarray*}

\end{theorem}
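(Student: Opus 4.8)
The plan is to find an explicit region $A$ in which the deterministic drift pushes solutions to infinity so strongly that the (bounded-variance) Brownian perturbations cannot prevent a finite-time blow-up with positive probability. The natural place to look is along the negative $x$-axis and the region near it where $Y_t$ does not vanish too quickly: when $x<0$ the $Y$-equation has drift $(a_2 - \alpha_2 x)y$ with $-\alpha_2 x>0$, so $|Y_t|$ tends to grow, and then the term $+Y_t^2$ in the $X$-equation, combined with $-\alpha_1 x^2$ (which is also positive and driving $X_t$ more negative when $x<0$), feeds an explosion of $X_t$ to $-\infty$. The key structural point is that since $\alpha_1 > \alpha_2$, one can choose a wedge-shaped or parabola-type region, e.g. $A = \{(x,y): x < -M,\ y^2 > c\, x^2\}$ for suitable constants $M$ large and $c>0$, which the drift vector field leaves pointing ``outward'' toward infinity, and on which a comparison with a one-dimensional Riccati-type ODE $\dot u = \beta u^2$ (whose solutions blow up in finite time) can be set up.

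The steps I would carry out, in order, are as follows. First, introduce a comparison function — a natural candidate is something like $\Phi(x,y) = -x$ or a combination such as $\Phi = -x + \lambda \log|y|$ or $\Phi = y^2/(-x)$ chosen so that on $A$ one has $L\Phi \geq \beta \Phi^2$ (or $\geq \beta \Phi^{1+\epsilon}$) for a positive constant $\beta$, where $L$ is the generator \eqref{generator}; the quadratic lower bound is what forces finite-time explosion. Second, localize: because these differential inequalities only hold on $A$, I would run the argument up to the exit time $\sigma$ of a slightly smaller region $A' \subset\subset A$, and show that with positive probability the process stays in $A'$ long enough. Third, use a stochastic comparison / Dynkin argument: on the event that the process remains in $A'$, the process $\Phi(Z_t)$ dominates the solution of an SDE whose drift is $\beta u^2$ and whose diffusion coefficient is bounded, and such a process reaches $+\infty$ in finite time with probability one (or at least with positive probability), by an explicit supermartingale / Feller-test computation. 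Fourth, combine: starting from $z$ deep inside $A'$, the probability that the diffusion exits $A'$ through its ``non-explosive'' boundary before $\Phi$ blows up can be made less than $1$ by taking $z$ far enough out, so $P_z\{\tau<\infty\}>0$.

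The main obstacle I anticipate is the interplay between the localization and the blow-up: the explosion estimate wants the process to spend a long (in fact, arbitrarily long in $\Phi$-scale) time in the region $A$, but a priori Brownian fluctuations of order $\sqrt{t}$ could carry it out of $A$ through the ``bad'' part of the boundary (the side where $y^2$ is comparable to or smaller than $x^2$, or where $x$ is not yet very negative). Resolving this requires choosing the region $A$ and the comparison function together so that (i) the drift is strictly inward-pointing on the lateral boundary with a margin that dominates the bounded diffusion, and (ii) the explosion in $\Phi$ happens fast enough (finite time, with the blow-up time controlled) that there is a genuinely positive probability the Brownian motion has not yet left. Getting these two competing requirements to hold simultaneously — which is exactly where the hypothesis $\alpha_1 > \alpha_2$ is used, since it guarantees the $-\alpha_1 x^2$ term wins over what the $Y$-dynamics can do — is the crux of the argument; the remaining estimates are routine Dynkin-formula and stopping-time manipulations.
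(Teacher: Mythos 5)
Your overall strategy -- a Lyapunov--Dynkin argument showing that the drift blows up a test function faster than the noise can dampen it -- is the right one and is the one the paper uses, but there are two significant issues with the proposal, one a genuine error and one a non-optimal and more delicate route than the paper takes.

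The error is in the geometry of the explosive region $A$. You propose $A = \{x < -M,\ y^2 > c x^2\}$, i.e.\ a wedge \emph{away} from the negative $x$-axis, and you justify this by saying that the $+Y_t^2$ term ``feeds an explosion of $X_t$ to $-\infty$.'' This has the sign backwards: on $\{x<0\}$ the term $+y^2$ in $\dot X = a_1 X - \alpha_1 X^2 + Y^2$ pushes $X$ \emph{up} toward $0$, opposing the explosive term $-\alpha_1 X^2$. For $X_t$ to run off to $-\infty$, one needs $Y_t^2$ to stay \emph{small} relative to $\alpha_1 X_t^2$, so the correct region is a wedge \emph{around} the negative $x$-axis, of the form $U_{\xi,M} = \{x<-M,\ |y| < |x|/\xi\}$ with $\xi^{-2} < \alpha_1 - \alpha_2$. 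The content of the hypothesis $\alpha_1 > \alpha_2$ is exactly that this wedge is forward-invariant for the deterministic flow: near the negative axis, $\tfrac{d}{dt}\log|x| \approx \alpha_1|x|$ while $\tfrac{d}{dt}\log|y| \approx \alpha_2|x|$, so $|x|$ outruns $|y|$ and the ratio $|y|/|x|$ decreases; under $\alpha_1<\alpha_2$ the opposite happens and the process escapes the wedge, which is why the phenomenon is sharp. With your region the drift actually points inward toward the origin rather than out to infinity, and the comparison argument would not close.

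The second point is structural. You use an unbounded comparison function $\Phi\approx -x$ with a \emph{quadratic} lower bound $L\Phi \ge \beta\Phi^2$ and then try to combine a Riccati comparison with an exit-time estimate for a slightly smaller region $A'\subset\subset A$. You correctly identify the competition between blow-up time and exit time through the lateral boundary as the main obstacle, but you do not resolve it. The paper sidesteps it entirely. Instead of an unbounded function and a quadratic bound, the paper constructs a \emph{bounded}, smooth, nonnegative bump function $g=h\rho$ supported in $\overline{U_{\xi,M}}$ (with $h$ an exponential cutoff in $x$ and $\rho$ a bump in the slope variable $q=\xi y/x$), strictly positive on the interior, and shows the \emph{linear} inequality $Lg \ge Cg$ holds on all of $\mathbb{R}^2$. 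Outside the wedge this is trivial ($0\ge 0$ since $g$ and all its derivatives vanish there), so the estimate only needs to be established inside, where the invariance of the wedge makes it possible. Then a single Has'minskii-type argument (Lemma \ref{psiexp}: $e^{-Ct}g(Z_{t\wedge\tau_n})$ is a bounded submartingale) gives $P_z\{\tau<\infty\}>0$ directly for any $z$ with $g(z)>0$, with no localization, no exit-time control, and no stochastic Riccati comparison. Your ``obstacle'' paragraph correctly flagged the difficulty but did not find this resolution; the compactly-supported bump with a merely linear exponential bound is the key idea that makes the argument clean.
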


Theorem \ref{theorem1} will be established by proving Lemma \ref{lem1} and Lemma \ref{lem2} below.  Lemma \ref{lem1} implies that $Z_{t}$ is nonexplosive and has at least one invariant probability measure (see \cite{HAS, MTIII, RB}).  Lemma \ref{lem2}, which is only necessary when $\kappa_{1}=0$, shows that the supports of any two \emph{extremal}\footnote{An invariant probability measure $\nu$ is \emph{extremal} if whenever $\nu = (1-\lambda) \nu_{1} + \lambda \nu_{2}$ for $\nu_{1}, \nu_{2}$ invariant probability measures and $\lambda \in (0,1)$, then $\nu_{1}=\nu_{2}=\nu.$} invariant probability measures contain a non-empty open subset in common.  This, coupled with regularity of the transition measures of the process $Z_t$, proves uniqueness.  For more details and further information, see \cite{AK, HAS, HM, MTI, MTII, MTIII}.  

\begin{lemma}\label{lem1}
If $\alpha_{2}>\alpha_{1}$, there exists a $C^{\infty}$-function $\Phi :\mathbb{R}^{2}\rightarrow \mathbb{R}$ such that 
\begin{description}
\item[(1)] $\Phi(z) \rightarrow \infty$ as $|z| \rightarrow \infty$.
\item[(2)]  There exist positive constants $C, D$ such that 
\begin{eqnarray}\label{boundL}
L\Phi(z) & \leq &-C\Phi(z)+ D,
\end{eqnarray}
for all $z \in \mathbb{R}^{2}$.  
\end{description}
\end{lemma}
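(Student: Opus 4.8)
The plan is to reduce the statement to a drift estimate near infinity and then build $\Phi$ by a \emph{Lyapunov covering}: cover a neighbourhood of infinity in $\mathbb{R}^{2}$ by finitely many regions, construct on each a local function with the desired estimate, and glue the pieces together. For the reduction, it suffices to produce a $C^{\infty}$ function $\Phi$ with $\Phi(z)\to\infty$ as $|z|\to\infty$ together with constants $R,C>0$ such that $L\Phi(z)\le -C\Phi(z)$ for $|z|\ge R$; since $\Phi$ and $L\Phi$ are then bounded on the compact set $\{|z|\le R\}$, inequality \eqref{boundL} follows with $D:=\sup_{|z|\le R}\bigl(L\Phi(z)+C\Phi(z)\bigr)$. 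So one only needs to understand the generator in each direction to infinity.

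Examining the drift $b(x,y)=(a_{1}x-\alpha_{1}x^{2}+y^{2},\,a_{2}y-\alpha_{2}xy)$, one sees that on $\{x\ge 0\}$ the terms $-\alpha_{1}x^{2}$ and $-\alpha_{2}xy$ drive the motion inward, and near the $y$-axis $\dot X\approx Y^{2}>0$ pushes $X$ into $\{x>0\}$; on these regions suitable combinations of $x^{2}$, $y^{2}$ and $e^{-\beta x}y^{2}$ serve as local Lyapunov functions, for which the second-order part of $L$ contributes only lower-order terms. The genuinely hard region is $\{x\to-\infty\}$: along the negative $x$-axis the deterministic system $\dot x=a_{1}x-\alpha_{1}x^{2}$, $\dot y=(a_{2}-\alpha_{2}x)y$ explodes in finite time, so no estimate using the drift alone can succeed there---this is precisely the phenomenon behind the remark that $\kappa_{2}=0$ forces explosion---and the local $\Phi$ must be strictly concave in $y$ near the axis, so that $\kappa_{2}\,\partial_{y}^{2}\Phi<0$ supplies the missing negativity. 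The mechanism preventing explosion when $\alpha_{2}>\alpha_{1}$ is that, off the axis, $Y$ is amplified at rate $-\alpha_{2}X$, so a trajectory heading to $x=-\infty$ satisfies $Y^{2}\sim|X|^{2\alpha_{2}/\alpha_{1}}$; since $\alpha_{2}>\alpha_{1}$ the exponent $2\alpha_{2}/\alpha_{1}$ exceeds $2$, so $Y^{2}$ eventually overwhelms $\alpha_{1}X^{2}$ in the $X$-equation and reverses the motion. Accordingly, on this region I would work in the self-similar variable $w=y^{2}/|x|^{2\alpha_{2}/\alpha_{1}}$ (which, by a short computation using $a_{1}=2\alpha_{2}/\alpha_{1}$, is monotone along the deterministic flow), seek a local function reflecting this scaling, modified near the axis to be concave in $y$, and choose its profile by solving the differential inequality in $w$ that results from substituting the ansatz into $L$; this is the step where $\alpha_{2}>\alpha_{1}$ is used.

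Finally I would patch the local pieces $\Phi_{i}$ on regions $U_{i}$ together (via a subordinate smooth partition of unity $\{\chi_{i}\}$, or by successively modifying a single function region by region), adding a large constant and a compactly supported correction to make $\Phi$ globally $C^{\infty}$ and bounded below. The crux of the argument---both the delicate design of the local function on $\{x\to-\infty\}$ and the gluing---is controlling the error terms $\Phi_{i}\,L\chi_{i}+2\kappa_{1}\,\partial_{x}\chi_{i}\,\partial_{x}\Phi_{i}+2\kappa_{2}\,\partial_{y}\chi_{i}\,\partial_{y}\Phi_{i}$ produced when $L$ differentiates the cutoffs; these must be dominated by the main negative term $\sum_{i}\chi_{i}\,L\Phi_{i}$. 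I would arrange this by choosing the cover so that on each overlap at least one $\Phi_{i}$ has $L\Phi_{i}$ strictly negative of higher order than the error, and by normalising the $\Phi_{i}$ to be comparable on overlaps so that $\Phi_{i}\,L\chi_{i}$ is lower order there. Making the region $\{x\to-\infty\}$ and its overlaps with the neighbouring regions cooperate---simultaneously achieving properness, concavity in $y$ near the axis, and a clean drift estimate---is where essentially all the difficulty lies; the remaining regions are routine.
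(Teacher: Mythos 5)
Your proposal follows essentially the same route as the paper: a Lyapunov covering of a neighbourhood of infinity, a function concave in $y$ near the negative $x$-axis so that the $\kappa_{2}\,\partial_{yy}$ term supplies the missing negativity, a scaling-type function in the intermediate wedge where $\alpha_{2}>\alpha_{1}$ enters, and a gluing step in which the cutoff error terms are dominated by the main negative contributions, with the pieces normalised to be comparable on overlaps. The only notable difference of detail is that the paper does not work with the exact self-similar exponent $\alpha_{2}/\alpha_{1}$ but with a detuned quantity $|x|^{2\delta}|y|^{-2\sigma\delta}$, $\sigma\in(\alpha_{1}/\alpha_{2},1)$, which produces a strictly negative drift coefficient $(\alpha_{1}-\sigma\alpha_{2})$ robust to the It\^o correction, together with a preliminary scaling reduction that lets $\kappa_{2}$ be taken small when tuning the exponents and the patching constants.
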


\begin{lemma}\label{lem2}
~~~

\begin{description}
\item[(1)] For $t>0$, $z\in \mathbb{R}^{2}$:
\begin{equation*}
P(t, z, dw)= p(t, z, w) \,dw,
\end{equation*}
where $p(t, z, w)$ is jointly $C^{\infty}$ in $(t, z,w)\in (0, \infty) \times \mathbb{R}^{2}\times \mathbb{R}^{2}$ and $dw$ is Lebesgue measure on $\mathbb{R}^{2}$.  Moreover, any invariant probability measure $\nu$ (such measures exist by Lemma \ref{lem1}) satisfies
\begin{equation*}
\nu(dw) = \rho_{\nu}(w) \, dw,
\end{equation*}   
where $\rho_{\nu}\in C^{\infty}(\mathbb{R}^{2})$.  
\item[(2)]  Let $\mu, \tilde{\mu}$ be extremal invariant probability measures.  Then there exists a non-empty open $U$ such that 
\begin{eqnarray*}
\text{supp }\mu\cap \text{supp } \tilde{\mu} \supset U.
\end{eqnarray*}

\end{description} 
\end{lemma}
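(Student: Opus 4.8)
I would organize the proof around the two parts separately, treating $\kappa_1=0$ as the substantive case: for $\kappa_1>0$ the operator $L$ is uniformly elliptic on compacts and both assertions are classical (the density $p$ is smooth and strictly positive, every invariant probability measure has a smooth positive density, and (2) holds with $U=\mathbb{R}^2$). So assume $\kappa_1=0$. For (1) I would set $V_0=(a_1x-\alpha_1x^2+y^2)\partial_x+(a_2y-\alpha_2xy)\partial_y$ and $V_1=\sqrt{2\kappa_2}\,\partial_y$ (constant, so \eqref{initial_system} is the same in It\^{o} and Stratonovich form) and compute
\[
[V_1,V_0]=\sqrt{2\kappa_2}\,\big(2y\,\partial_x+(a_2-\alpha_2x)\,\partial_y\big),\qquad [V_1,[V_1,V_0]]=4\kappa_2\,\partial_x,
\]
so that $V_1$ and $[V_1,[V_1,V_0]]$ span $\mathbb{R}^2$ at every point. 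This gives H\"{o}rmander's condition for $\partial_t-L$ --- hence a transition density $p(t,z,w)$ jointly $C^\infty$ on $(0,\infty)\times\mathbb{R}^2\times\mathbb{R}^2$ by H\"{o}rmander's theorem --- and also for $L$, so $L$ and its formal adjoint $L^\ast$ are hypoelliptic; since Dynkin's formula and invariance give $L^\ast\nu=0$ in $\mathcal{D}'(\mathbb{R}^2)$, the density $\rho_\nu$ of an invariant probability measure is $C^\infty$. (Alternatively one writes $\rho_\nu(w)=\int p(t,z,w)\,\nu(dz)$ from invariance and Fubini and differentiates under the integral using Chapman--Kolmogorov.)

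For (2) (still $\kappa_1=0$) I would pass to the control system $\dot x=a_1x-\alpha_1x^2+y^2$, $\dot y=a_2y-\alpha_2xy+u$ with locally integrable control $u$, write $\mathcal{A}(z)=\bigcup_{t\ge0}\mathcal{A}_t(z)$ for its forward accessible set from $z$, and use the Stroock--Varadhan support theorem, $\operatorname{supp}P(t,z,\cdot)=\overline{\mathcal{A}_t(z)}$. Two soft reductions bring the lemma down to a reachability claim: (i) for an invariant (in particular extremal) $\mu$ and any $z\in\operatorname{supp}\mu$ one has $\overline{\mathcal{A}(z)}\subseteq\operatorname{supp}\mu$ --- invariance forces $P(t,z,\operatorname{supp}\mu)=1$ for $\mu$-a.e.\ $z$, the set where this holds is closed since the nonexplosive diffusion of Lemma~\ref{lem1} is Feller (so $z\mapsto P(t,z,F)$ is upper semicontinuous for closed $F$), hence it contains $\operatorname{supp}\mu$, whence $\overline{\mathcal{A}_t(z)}=\operatorname{supp}P(t,z,\cdot)\subseteq\operatorname{supp}\mu$ for all $t$; (ii) $\operatorname{supp}\mu\neq\emptyset$. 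Therefore it suffices to exhibit a single non-empty open $U$ with $U\subseteq\overline{\mathcal{A}(z)}$ for every $z\in\mathbb{R}^2$; then $U\subseteq\operatorname{supp}\mu\cap\operatorname{supp}\tilde\mu$ for any extremal $\mu,\tilde\mu$, which is the lemma.

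To produce such a $U$ I would exploit that $y^2\ge0$: the half-plane $\{x>c\}$ is forward invariant for the control system whenever $c(a_1-\alpha_1c)\ge0$, so $H:=\{x>(a_1\wedge0)/\alpha_1\}$ is invariant, and $x_{\mathrm{eq}}:=(a_1\vee0)/\alpha_1$ is a root of $x(a_1-\alpha_1x)$ lying in $\overline{H}$ that attracts the scalar flow $\dot x=x(a_1-\alpha_1x)$ inside $H$. I would take $p_\ast=(x_{\mathrm{eq}},0)$, $U=\{x>x_{\mathrm{eq}}\}$, and prove $U\subseteq\overline{\mathcal{A}(z)}$ for all $z$ in two steps. (a) $p_\ast\in\overline{\mathcal{A}(z)}$ for every $z$: if the $x$-coordinate of $z$ does not exceed $(a_1\wedge0)/\alpha_1$, make $|y|$ momentarily large via $u$ so that $\dot x=y^2-(\alpha_1x^2-a_1x)>0$ drives $x$ into $H$ (the subtracted term being maximal at the left end of the $x$-interval traversed); then, with the state in $\overline{H}$, drive $y$ to $0$ and let $\dot x=x(a_1-\alpha_1x)$ carry $x$ to $x_{\mathrm{eq}}$ (perturbing slightly off the unstable equilibrium if the trajectory would land on it), reaching $p_\ast$. (b) $U\subseteq\overline{\mathcal{A}(p_\ast)}$: from $p_\ast$ one moves $y$ to any value with $x$ essentially unchanged (since $\dot x=y^2$ at $x=x_{\mathrm{eq}}$), and then holding $y$ at a suitable constant --- after an overshoot if $x$ must descend onto its target --- makes $\dot x=a_1x-\alpha_1x^2+y^2$ carry $x$ monotonically onto any point of $(x_{\mathrm{eq}},\infty)$, so every $(X,Y)$ with $X>x_{\mathrm{eq}}$ is reached. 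Since $p_\ast\in\overline{\mathcal{A}(z)}$ forces $\overline{\mathcal{A}(p_\ast)}\subseteq\overline{\mathcal{A}(z)}$ (concatenate trajectories, use continuous dependence on initial data), (a) and (b) give $U\subseteq\overline{\mathcal{A}(z)}$ for all $z$.

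The hard part will be step (2) --- specifically the control-theoretic facts (a) and (b): one must pin the accessible sets down precisely enough to find a common open subset, and this is exactly where the phenomenon flagged in the introduction matters. The sign-definiteness of $y^2$ both makes the half-planes $\{x>c\}$ forward invariant (so the support of the process is a proper subset of $\mathbb{R}^2$) and forces every $\overline{\mathcal{A}(z)}$ to be the closure of a right half-plane, while the bracket relation $[V_1,[V_1,V_0]]\propto\partial_x$ from (1) is what makes those sets have non-empty interior. Carrying out (a) and (b) rigorously reduces to elementary but somewhat delicate estimates for the scalar equation $\dot x=a_1x-\alpha_1x^2+y^2$ under the indicated controls; everything else --- the Hörmander and distributional-regularity arguments in (1), the support theorem, and the reductions (i)--(ii) --- is standard.
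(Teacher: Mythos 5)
Your proof of part (1) follows the paper's route in substance and detail: write the generator in H\"{o}rmander form, compute the bracket $[V_1,[V_1,V_0]]\propto\partial_x$ (the paper does the identical computation with $X_2=\sqrt{\kappa_2}\partial_y$ and $X_0$ the drift), invoke hypoellipticity of $\partial_t-L$ and of $L^\ast$, and deduce smoothness of the transition and invariant densities.

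Part (2) is where you genuinely diverge from the paper, and both routes are correct. The paper's argument also reduces to a controllability statement via the support theorem, but it proves the statement using the Jurdjevic--Kupka saturation machinery: starting from the polysystem $F=\{Z+uW_1+vW_2\}$, it uses normalizers and closed convex hulls (Lemmas~\ref{normlem} and~\ref{conelem}) to place $\pm\lambda W_2$ and the nonnegative multiples of $[W_2,[W_2,Z]]\propto\partial_x$ in $\mathrm{Sat}(F)$, concluding that $\overline{A(z_0)}$ contains the right half-plane $H(z_0)=\{x'\ge x_0\}$; any two such half-planes overlap in an open set $U(z_0,w_0)$ that depends on the pair, and the Arnold--Kliemann identity $\mathrm{supp}\,\mu=\overline{A(z_0)}$ closes the argument. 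You instead reduce---via invariance of $\mu$, the Feller property, and the support theorem---to a sharper but more demanding claim, namely that a \emph{single} fixed open set $U=\{x>x_{\mathrm{eq}}\}$ lies in $\overline{\mathcal{A}(z)}$ for \emph{every} $z$, and you establish it by explicit control maneuvers (make $|y|$ large to push $x$ into the forward-invariant half-plane, relax to the distinguished equilibrium $p_\ast$, then sweep out the half-plane from $p_\ast$). Your reduction uses only one inclusion of the Arnold--Kliemann characterization, which is a mild gain in elementarity; the cost is that the reachability steps (a) and (b) require genuine $\varepsilon$-management of trajectories, whereas the saturate argument sidesteps any trajectory-chasing by working at the level of vector fields. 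The paper's route is more systematic and would transfer more readily to other drift fields; yours is more hands-on and makes the geometric content---that every accessibility closure contains the same right half-plane $\{x>x_{\mathrm{eq}}\}$---completely explicit. No gaps, provided the epsilon-estimates in (a) and (b) are carried out as you indicate.
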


To establish Theorem \ref{theorem2}, we use the following lemma, the proof of which is a simple consequence of Dynkin's formula.  For more information, consult \cite{HAS}.    

\begin{lemma}\label{psiexp}
There exist a bounded $C^{\infty}$-function $\Psi: \mathbb{R}^{2}\rightarrow [0, \infty)$, strictly positive on $A\neq \emptyset \subset \mathbb{R}^{2}$, and a constant $E>0$ such that:
\begin{eqnarray*}
L \Psi(z) &\geq & E \Psi(z),
\end{eqnarray*}
for all $z\in \mathbb{R}^{2}.$
\end{lemma}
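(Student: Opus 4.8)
The plan is to construct $\Psi$ by hand, supported on a ``cone of explosion'' opening along the negative $x$-axis. Heuristically, when $x$ is very negative the drift of $X_t$ is dominated by $-\alpha_1 x^2$, which sends $X_t$ to $-\infty$ in finite time, while the competing $+Y_t^2$ term and the multiplicative drift of $Y_t$ could in principle prevent this; whether they do is governed by the sign of $\alpha_1-\alpha_2$. Concretely, set $u=-x$ and $\zeta=y/(-x)$; one checks that when $0<\delta<\sqrt{\alpha_1-\alpha_2}$ and $N$ is large the truncated cone $A=\{-x>N,\ |y|<\delta(-x)\}$ is (approximately) forward invariant for the deterministic flow --- on $\{|\zeta|=\delta\}$ one has $\frac{d}{dt}(y/u)\approx (y/u)\,u(\alpha_2-\alpha_1+\zeta^2)<0$, and on $\{u=N\}$ one has $\dot u\ge(\alpha_1-\delta^2)N^2-|a_1|N>0$ --- while inside $A$ one has $\dot x\le a_1x-(\alpha_1-\delta^2)x^2$, so the deterministic trajectory reaches infinity in finite time. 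The existence of such a $\delta>0$ is precisely where the hypothesis $\alpha_2<\alpha_1$ enters, and this $A$ will be the set announced in Theorem~\ref{theorem2}.

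With $\delta$ fixed, I would take $\Psi(x,y)=\phi(-x)\,h\bigl(y/(-x)\bigr)$, where $h\colon\mathbb{R}\to[0,\infty)$ is a fixed smooth unimodal bump supported on $[-\delta,\delta]$, vanishing to infinite order at $\pm\delta$, convex near $\pm\delta$, and with $\zeta h'(\zeta)\le0$ for all $\zeta$ (for instance $h(\zeta)=e^{-1/(\delta^2-\zeta^2)}$ on $(-\delta,\delta)$), and $\phi\colon\mathbb{R}\to[0,1]$ is smooth with $\phi(u)=e^{-c/(u-N)}$ for $u>N$ and $\phi\equiv0$ for $u\le N$, the constants $c>0$ and $N>0$ fixed at the end of the argument. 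Then $\Psi$ is $C^\infty$ (each factor vanishes to infinite order at the relevant endpoint, and $\phi(-x)\equiv0$ near $x=0$), bounded, nonnegative, and strictly positive exactly on $A\ne\emptyset$. At each point of $\{\Psi=0\}$ the function $\Psi$ attains a global minimum, so $\nabla\Psi=0$ and $D^2\Psi\succeq0$ there, whence $L\Psi=\kappa_1\partial_x^2\Psi+\kappa_2\partial_y^2\Psi\ge0=E\Psi$ for any $E$; thus everything reduces to estimating $L\Psi$ on $A$.

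On $A$, a direct computation in the coordinates $(u,\zeta)$ yields $L\Psi=D_1+D_2+\kappa_1\partial_x^2\Psi+\kappa_2\partial_y^2\Psi$, where $D_1=u\,\phi'(u)h(\zeta)\bigl(a_1+(\alpha_1-\zeta^2)u\bigr)$ and $D_2=\zeta\,\phi(u)h'(\zeta)\bigl((a_2-a_1)+(\alpha_2-\alpha_1+\zeta^2)u\bigr)$. On $A$ one has $\phi'>0$, $\alpha_1-\zeta^2\ge\alpha_1-\delta^2>0$, and $u^2\phi'(u)=cu^2(u-N)^{-2}\phi(u)\ge c\phi(u)$, so $D_1\ge\tfrac12 c(\alpha_1-\delta^2)\Psi$ once $N\ge 2|a_1|/(\alpha_1-\delta^2)$; and one has $\zeta h'(\zeta)\le0$ and $\alpha_2-\alpha_1+\zeta^2\le\alpha_2-\alpha_1+\delta^2<0$, so $D_2\ge0$ once $N\ge|a_2-a_1|/(\alpha_1-\alpha_2-\delta^2)$. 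For the second-order part, write it as $\Psi\bigl(\kappa_1(W_{xx}+W_x^2)+\kappa_2(W_{yy}+W_y^2)\bigr)$ with $W=\log\Psi$: the squared-gradient terms are nonnegative, and near $\{|\zeta|=\delta\}$ the convexity of $h$ makes $\partial_y^2\Psi$ and the dominant part of $\partial_x^2\Psi$ positive, so the second-order part is positive there; away from that boundary it is bounded below by $-C_1\kappa_1 c^{-2}-\varepsilon_N$, where $C_1$ is a universal constant and $\varepsilon_N\to0$ as $N\to\infty$ for fixed $c,\delta,h,\kappa_1,\kappa_2$. Choosing $c$ large so that $C_1\kappa_1 c^{-2}\le\tfrac14 c(\alpha_1-\delta^2)$, and then $N$ large, gives $L\Psi\ge E\Psi$ on $A$ with $E=\tfrac18 c(\alpha_1-\delta^2)>0$, completing the proof.

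The step I expect to be the real obstacle is controlling the second-order part near the transverse boundary $\{|y|=\delta(-x)\}$, where $\Psi$ and all its derivatives vanish but the ratios $\partial^2\Psi/\Psi$ blow up, so signs must be tracked carefully. It is resolved by the two facts that a smooth compactly supported cutoff is convex where it vanishes --- so $\kappa_2\partial_y^2\Psi\ge0$ there --- and that, simultaneously, the drift points strictly into the cone (this is exactly where $\delta^2<\alpha_1-\alpha_2$ is used), producing a large positive first-order contribution; together these dominate. The remaining work --- reconciling the thresholds on $c$ and $N$, and separately checking the corner $\{u\approx N,\ |\zeta|\approx\delta\}$ and the zone where $h''$ changes sign --- is routine bookkeeping once these signs are understood.
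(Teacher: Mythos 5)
Your construction is essentially the paper's: the same truncated cone $A$ about the negative $x$-axis with half-angle $\delta = \xi^{-1}$ satisfying $\delta^{2}<\alpha_{1}-\alpha_{2}$, the same product ansatz $\Psi = (\text{radial cutoff in } -x)\times(\text{angular bump in } y/x)$ with infinite-order vanishing at all boundaries, and the same mechanism — the drift terms give a uniformly positive contribution proportional to $\Psi$ inside the cone, while the diffusion terms are nonnegative where the bump is convex and decay like $|x|^{-2}$ elsewhere, hence are dominated once the cone is pushed far enough out. The only cosmetic difference is that you introduce an extra parameter $c$ in the radial factor $e^{-c/(u-N)}$ to absorb the $\kappa_{1}$-dependent second-order terms, whereas the paper fixes $c=1$ and instead enlarges $M$ to swallow the ($\kappa_1$-dependent) constant $D_3$; both tunings accomplish the same thing.
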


We note that in the case when $\kappa_{1}>0$ and $\alpha_{2}>\alpha_{1}$, we can prove that the rate of convergence to the invariant probability measure $\nu$ is exponential.  To see this, first note that we may choose a constant $F>0$ such that $\Phi_{F}=\Phi + F \geq 1$.  The rate can then be quantified by the norm $\| \, \cdot \,\|_{\Phi_{E}}$ defined  on $\mathcal{B}$-measures $\mu$ via:  
\begin{equation*}
\| \mu \|_{\Phi_{F}}= \sup_{0\leq f \leq \Psi_{F}}  \left|\int f(y) \, d\mu(y) \right|, 
\end{equation*} 
where the supremum is taken over nonnegative $\mathcal{B}$-measurable real-valued functions $f$ bounded above by $\Phi_{F}$ \cite{HM, MTIII}.

\begin{corollary}
If $\kappa_{1}>0$, then there exist $r, C>0$ such that
\begin{equation}\label{expconv}
\| P(t, z, \, \cdot \,) - \nu \|_{\Phi_{F}} \leq C \Phi_{F}(z)e^{-rt},
\end{equation}
for all $t\geq 0$, $z\in \mathbb{R}^{2}$.  Moreover, since $\Phi_{F}\geq 1$ we can replace the left-hand side of \eqref{expconv} by $\| P(t, z, \, \cdot \, )-\nu\|_{TV}$ where $\| \,\cdot \,\|_{TV}$ is the total variation norm on $\mathcal{B}$-measures.  \end{corollary}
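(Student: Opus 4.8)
The plan is to deduce \eqref{expconv} from the general theory of geometrically ergodic Markov chains (the weighted-norm ergodic theorem of \cite{MTIII}, or equivalently the version of Harris' theorem in \cite{HM}), applied to a discrete-time skeleton $Z_{nt_0}$, and then to pass back to continuous time. Two ingredients are required: a geometric (Foster--Lyapunov) drift condition and a minorization condition on the sublevel sets of the Lyapunov function. The first is essentially Lemma \ref{lem1}: since $\alpha_2 > \alpha_1$, Theorem \ref{theorem1}(1) gives nonexplosivity, so Dynkin's formula applied to $\Phi$ can be extended past the stopping times $\tau_n$ and, combined with \eqref{boundL} and a Gronwall estimate, yields $P_t \Phi(z) \leq e^{-Ct}\Phi(z) + D/C$. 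Absorbing the additive shift $F$, this becomes $P_t \Phi_F(z) \leq e^{-Ct}\Phi_F(z) + D'$ for a suitable $D' > 0$, so $\Phi_F$ is a geometric Lyapunov function; choosing $t_0$ with $e^{-Ct_0} < 1$ gives the drift condition for the $t_0$-skeleton.

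For the second ingredient I would use the hypothesis $\kappa_1 > 0$. With $\kappa_1, \kappa_2 > 0$ the generator $L$ is uniformly elliptic, so by classical parabolic regularity (a fortiori by the H\"{o}rmander-type statement of Lemma \ref{lem2}(1)) the transition density $p(t,z,w)$ exists and is smooth and strictly positive on $(0,\infty) \times \mathbb{R}^2 \times \mathbb{R}^2$. Fix $t_0 > 0$. For each $R > 0$ the sublevel set $K_R = \{z : \Phi_F(z) \leq R\}$ is compact by Lemma \ref{lem1}(1), and by continuity and positivity of $p(t_0,\cdot,\cdot)$ there is a nonnegative, not identically zero function $c_R$ with $\inf_{z \in K_R} p(t_0, z, w) \geq c_R(w)$; normalizing $c_R$ produces a probability measure $\mu_R$ and a constant $\eta_R > 0$ such that $P(t_0, z, \cdot) \geq \eta_R\,\mu_R(\cdot)$ for all $z \in K_R$. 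Thus every sublevel set of $\Phi_F$ is a small (petite) set for the skeleton chain.

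Having a geometric drift condition whose small set is a sublevel set of the Lyapunov function, the weighted-norm ergodic theorem of \cite{MTIII} (cf. \cite{HM}) applies to $Z_{nt_0}$ and furnishes constants $\tilde C, \tilde r > 0$ with $\|P(nt_0, z, \cdot) - \nu\|_{\Phi_F} \leq \tilde C\,\Phi_F(z)\,e^{-\tilde r n}$, where $\nu$ is the invariant measure from Theorem \ref{theorem1}(2). To obtain \eqref{expconv} for all real $t \geq 0$, write $t = nt_0 + s$ with $s \in [0,t_0)$, use the semigroup identity $P(t,z,\cdot) - \nu = \int (P(s,z,dw)) \big(P(nt_0,w,\cdot) - \nu\big)$ together with the skeleton bound and the drift estimate $P_s\Phi_F(z) \leq \Phi_F(z) + D'$, and adjust the constants; this yields \eqref{expconv} with $r = \tilde r / t_0$ and a new $C$. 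Finally, since $\Phi_F \geq 1$, every $f$ with $|f| \leq 1$ also satisfies $0 \leq |f| \leq \Phi_F$, so $\|\cdot\|_{TV} \leq 2\|\cdot\|_{\Phi_F}$ on zero-mean signed measures, giving the last assertion after enlarging $C$. The main obstacle is the minorization step: one must check that uniform ellipticity—available precisely because $\kappa_1 > 0$—really produces a lower bound on $p(t_0, z, \cdot)$ that is uniform over the (compact) sublevel sets $K_R$. Given strong-Feller smoothing and strict positivity of the density this is routine, but it is exactly where $\kappa_1 > 0$ is used; when $\kappa_1 = 0$ the process is merely hypoelliptic and such quantitative lower bounds are considerably more delicate.
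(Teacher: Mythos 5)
Your proposal is correct, but it takes a different route from the paper's one-line argument. The paper simply observes that $\kappa_1,\kappa_2>0$ forces $\mathrm{supp}\,P(t,z,\cdot)=\mathbb{R}^2$ (topological irreducibility), that Lemma \ref{lem2}(1) gives smoothness of the transition density, and that Lemma \ref{lem1} supplies the Lyapunov function, and then cites Theorem A.2 of \cite{EM}, which packages precisely these three hypotheses into the exponential-convergence bound \eqref{expconv}. Your proof unpacks what that black box does: you derive the discrete-time geometric drift $P_{t_0}\Phi_F \leq e^{-Ct_0}\Phi_F + D'$ from Dynkin and Gronwall, then establish a minorization $P(t_0,z,\cdot)\ge \eta_R\,\mu_R$ on the sublevel sets $K_R=\{\Phi_F\le R\}$ using strict positivity and joint continuity of the density on compacts (which is where uniform ellipticity, hence $\kappa_1>0$, enters), then invoke the Meyn--Tweedie/Harris weighted-norm ergodic theorem for the $t_0$-skeleton, and finally interpolate back to continuous time and pass to total variation via $\Phi_F\ge1$. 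This is a perfectly valid Harris-chain argument and is essentially the proof that underlies Theorem A.2 of \cite{EM}; what the paper's citation buys is brevity, while your version buys transparency about exactly where each hypothesis ($\kappa_1>0$ for the uniform minorization, Lemma \ref{lem1} for the drift) is used. One small caveat worth flagging: your minorization step needs strict positivity of $p(t_0,z,w)$, which is a bit more than the smoothness asserted in Lemma \ref{lem2}(1), but for the uniformly elliptic case with smooth coefficients this is classical and unproblematic; the paper's route instead uses full support of $P(t,z,\cdot)$ (irreducibility), which serves the same purpose inside Theorem A.2 of \cite{EM}.
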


\begin{proof}
The second conclusion easily follows from the first.  To see the first, since both $\kappa_{1}$ and $\kappa_{2}$ are strictly positive, it follows that 
\begin{eqnarray*}
\text{supp }P(t, z, \,\cdot\,) = \mathbb{R}^{2},
\end{eqnarray*}
for all $t>0$, and all $z\in \mathbb{R}^{2}$.  Moreover, Lemma \ref{lem2} \textbf{(1)} is satisfied.  This  together with Lemma \ref{lem1} proves the result by Theorem A.2 of \cite{EM}.  
\end{proof}




\section{Existence of a Lyapunov Function}\label{lyapunov}

In this section, we prove Lemma \ref{lem1}.  Unfortunately, it is difficult to write down a single formula which would define $\Phi$ for all $z$.  There are a number of reasons for this.  First, when $\kappa_{1}=\kappa_{2}=0$, there is an unstable trajectory in \eqref{initial_system} along the negative $X$-axis.  Thus some noise, e.g. $\kappa_{2}>0$, must be utilized so that solutions do not explode.  In terms of estimating $L\Phi$, this means second order terms must be used along this trajectory.  Second, it is clear that noise with arbitrarily small magnitude in the $Y$-direction will direct $Z_{t}$ with $Z_{0}=(X_{0}, Y_{0})=(x,0),\, x<0$ off the unstable trajectory.  Once the process leaves the axis, the same noise that initially helped could now steer the process back to the unstable trajectory.  This is reflected in the estimates for $L\Phi$ in a sufficiently large region which includes the negative $X$-axis.  And third, once the process exits this region we must assure stability in the remaining regions as well.  Because the nature of the unperturbed dynamics changes significantly from the previous region, this calls naturally for a function $\Phi$ of a different type.  We now introduce the following definitions to elucidate our procedure.

\begin{definition}\label{lyapunovfunction}
Let \(U\subset \mathbb{R}^2\) be an unbounded region such that $\partial U$ is a continuous, piecewise smooth curve.  We call a function $\phi: U \rightarrow \mathbb{R}$ a \emph{Lyapunov function (for the operator $L$) on }\(U\) if: 
\begin{enumerate}[(I)]
\item $\phi\in C^{\infty}(U)$.   
\item \(\phi(z)\rightarrow\infty\) as \(|z|\rightarrow\infty,\ z\in U\).
\item There exist $C, D>0$ such that \(L\phi(z)\leq -C\phi(z)+ D\) for all $z\in U$.  
\end{enumerate}  
\end{definition}

\begin{definition}
Let $\phi_{1}, \phi_{2}, \ldots, \phi_{N}$ be Lyapunov functions on $U_{1}$, $U_{2}$, $\ldots$, $U_{N}$ respectively and suppose there exists $R>0$ such that
\begin{eqnarray*}
\mathbb{R}^{2}=\bigcup_{j=1}^{N} U_{j}\cup B_{R},
\end{eqnarray*}
where $B_{R}$ is the open ball of radius $R$ centered at the origin in $\mathbb{R}^{2}$.  We call $\{ (\phi_{1}, U_{1}), (\phi_{2}, U_{2}), \ldots, (\phi_{N}, U_{N})\}$ a \emph{Lyapunov covering}.    
\end{definition}

To exhibit $\Phi$, we will first find a Lyapunov covering.  The existence of such a covering is of course not sufficient to prove Lemma \ref{lem1}.  As suggested by the following proposition, however, finding a Lyapunov covering is a step in the right direction.  

\begin{proposition}\label{exitprop}
Suppose $\phi$ is a Lyapunov function on $U$.  Let $V \subset U$ be another unbounded region, satisfying the same assumptions as $U$ and such that $\partial U \cap \partial V=\emptyset$.  If $\tau_{V}=\inf_{t>0}\{ Z_{t} \in V^{c}\}$, then for $z\in V$
\begin{eqnarray*}
P_{z} \left\{ \tau_{V} < \tau \right\}=1.  
\end{eqnarray*}          
\end{proposition}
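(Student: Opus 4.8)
The plan is to combine the standard Lyapunov supermartingale estimate for the process stopped at $\tau_V$ with a recurrence-and-reachability argument that prevents $Z_t$ from remaining in $V$ forever.

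First I would record the geometric fact that $\overline V\subset U$: since $V\subset U$ we have $\overline V\subset\overline U$, and a point $p\in\partial V$ with $p\notin U$ would lie in $\overline U\setminus U=\partial U$, contradicting $\partial U\cap\partial V=\emptyset$. Hence the inequality $L\phi\le-C\phi+D$ holds on an open neighborhood of $\overline V$; moreover $\phi$ is continuous on $U$ with $\phi(z)\to\infty$ as $|z|\to\infty$ in $U$, so $\phi$ is bounded below on $\overline V$, and after adding a constant (which affects only $D$ in the Lyapunov inequality) I may assume $\phi\ge1$ on $\overline V$. For $z\in V$ set $\sigma_n=\tau_V\wedge\tau_n$; on $[0,\sigma_n]$ the process stays in the compact set $\{w\in\overline V:|w|\le n\}\subset U$, so Dynkin's formula applies to the stopped process and the space--time function $(t,z)\mapsto e^{Ct}\phi(z)$, and from $(\partial_t+L)\bigl(e^{Ct}\phi\bigr)=e^{Ct}\bigl(C\phi+L\phi\bigr)\le De^{Ct}$ on $\overline V$ one obtains
\[
E_z\!\left[e^{C(t\wedge\sigma_n)}\phi\bigl(Z_{t\wedge\sigma_n}\bigr)\right]\le\phi(z)+\tfrac{D}{C}e^{Ct}\qquad(t\ge0),
\]
uniformly in $n$.

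Letting $n\to\infty$ and using Fatou I would draw two conclusions. On $\{\tau\le\tau_V,\ \tau<\infty\}$ one has $\tau_n\uparrow\tau$ with $\tau_n<\tau\le\tau_V$, hence $\sigma_n=\tau_n$, and for fixed $t>\tau$, $Z_{t\wedge\sigma_n}=Z_{\tau_n}$ with $|Z_{\tau_n}|=n$; since $Z_{\tau_n}\in\overline V\subset U$ and $\phi\to\infty$ at infinity in $U$, the left side above would diverge, contradicting the bound. Thus $P_z\{\tau\le\tau_V,\ \tau<\infty\}=0$, and since $\{\tau_V>\tau\}$ and $\{\tau_V=\tau<\infty\}$ are both contained in that null event, $\tau_V<\tau$ a.s.\ on $\{\tau<\infty\}$; it remains to prove $P_z\{\tau_V=\infty\}=0$. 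Feeding the estimate back in, Fatou gives $E_z\bigl[\mathbf 1_{\{\tau_V=\infty\}}\phi(Z_t)\bigr]\le e^{-Ct}\phi(z)+\tfrac{D}{C}$ for every $t$, so by Markov's inequality there is a compact $K=\{w\in\overline V:\phi(w)\le M\}\subset U$ with $P_z\bigl(\{\tau_V=\infty\}\cap\{Z_t\in K\}\bigr)\ge\tfrac12 P_z\{\tau_V=\infty\}$ for all $t\ge0$.

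The last ingredient is a reachability estimate: there is $\varepsilon>0$ with $P_w\{\tau_V\le1\}\ge\varepsilon$ for every $w\in K\cap V$. Granting it and applying the Markov property at the integer times gives $P_z\{\tau_V>k+1\}\le P_z\{\tau_V>k\}-\varepsilon\,P_z\bigl(\{\tau_V>k\}\cap\{Z_k\in K\}\bigr)\le P_z\{\tau_V>k\}-\tfrac{\varepsilon}{2}P_z\{\tau_V=\infty\}$, and since the left side is nonnegative, iterating forces $P_z\{\tau_V=\infty\}=0$; together with the previous paragraph this yields $P_z\{\tau_V<\tau\}=1$. I expect this reachability estimate to be the main obstacle: $\kappa_1$ may vanish, so $L$ is only hypoelliptic and no Gaussian lower bound is available. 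One instead exploits the non-degenerate $\kappa_2>0$ component of the noise — which lets the driving Brownian motion push $Z_t$ across $\partial V$ from the compact set $K$ — together with, when $\kappa_1=0$, the control-theoretic/support description of the reachable sets of $Z_t$ developed in Section \ref{positivity}; in the configurations relevant here $V^{c}$ moreover has non-empty interior, namely a one-sided collar inside $U$ along $\partial V$, which renders the estimate routine.
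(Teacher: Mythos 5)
Your argument is essentially correct in outline, but it takes a genuinely different and more elaborate route than the paper, and the part that is genuinely new in your approach is precisely the part you leave as a sketch.

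The paper's own proof is much shorter. It shifts $\phi$ to be nonnegative on $U$, uses only the \emph{weak} drift bound $L\phi\le C\phi+D$ (the Remark after the proposition says this explicitly), forms the supermartingale $\psi(z,t)=e^{-Ct}(\phi(z)+D/C)$, applies Dynkin to the stopping time $\tau_n\wedge\tau_U\wedge t$ with $\tau_U$ the exit time from the \emph{larger} region $U$, and obtains $P_z\{\tau_n\le\tau_U\wedge t\}\le\phi_n^{-1}e^{Ct}\psi(z,0)\to0$. This gives $P_z\{\tau\ge\tau_U\}=1$, and then transfers to $\tau_V$ via the geometric observation that $Z_{\tau_U}\in\partial U$, which is disjoint from $\overline V$ (so, when $\tau_U<\infty$, path continuity forces $\tau_V<\tau_U$). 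Your first half does the same computation but with the strong bound and the function $e^{Ct}\phi$, and with $\tau_V$ in the stopping time directly; that works, but the strong bound is unnecessary there, and working with $\tau_U$ (as the paper does) makes the final passage to $\tau_V$ a one-line topological remark rather than a new estimate.

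The genuine divergence is your second half. You correctly observe that the Dynkin estimate alone leaves open the possibility $\tau_V=\tau=\infty$, i.e.\ that the process stays in $V$ forever without exploding, in which case $\tau_V<\tau$ fails. The paper's "by path continuity, $P_z\{\tau_U>\tau_V\}=1$" silently assumes $\tau_U<\infty$ and does not address this event, so you have put your finger on a real imprecision. However, the way you propose to close it -- a positive-recurrence argument using the strong drift to land $Z_t$ in a compact $K$ at all times, combined with a uniform reachability bound $\inf_{w\in K\cap V}P_w\{\tau_V\le1\}\ge\varepsilon$ -- rests entirely on that reachability bound, and calling it "routine" overstates the case. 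For $\kappa_1=0$ it requires the hypoellipticity and the geometric control results of Section \ref{positivity}, which appear \emph{after} this proposition, so invoking them here would create a forward dependence; even granting them, Section \ref{positivity} establishes that $\overline{A(z_0)}\supset\{u\ge x_0\}$ but says nothing about accessibility in time $\le1$ or about uniformity of the hitting probability over a compact set, both of which your argument needs and which require additional work (lower semicontinuity of $w\mapsto P_w\{\tau_V\le1\}$, a covering argument over $K$, and a check that $V^c$ is actually reachable from every point of $K\cap V$ within the accessible cone). In short: your plan is sound in structure and even patches a gap the paper glosses over, but the key new ingredient is left unproved and is harder than you acknowledge, whereas the paper obtains (essentially) the same conclusion with a shorter, self-contained argument by working with $\tau_U$ and the weak bound.
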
 

\begin{proof}  Let $\tau_{U}=\inf_{t>0} \{Z_{t}\in U^{c} \}$ and $\tau_{n, U}(t) =\tau_{n} \wedge \tau_{U} \wedge t$.  By adding a sufficiently large constant to $\phi$, we may assure the resulting function is a non-negative Lyapunov function on $U$.  Let us use $\phi$ to denote this non-negative function.  It is clear that the much weaker bound $L\phi(z) \leq C\phi(z) + D$ is satisfied on $U$.  Letting $\psi(z, t)=e^{-Ct} (\phi+\frac{D}{C})$, for $z\in V$ we obtain by Dynkin's formula 
\begin{eqnarray*}
E_{z}\left[ \psi(Z_{\tau_{n, U}(t)}, \tau_{n, U}(t))\right]-\psi(z,0)&=& E_{z} \left[\int_{0}^{\tau_{n, U}(t)}\psi_{t}(s, Z_{s})+L \psi(s, Z_{s}) \, ds \right]\\
&\leq& 0.  
\end{eqnarray*}
We see that
\begin{eqnarray*}
E_{z}\left[ \psi(Z_{\tau_{n, U}(t)}, \tau_{n, U}(t))\right]&\geq &e^{-Ct} \phi_{n} P_{z}\left\{\tau_{n} \leq \tau_{U}\wedge t  \right\},
\end{eqnarray*} 
where $\phi_{n}=\inf_{z\in U \cap \partial B_{n}} \phi(z) \rightarrow \infty$ as $n\rightarrow \infty$.  
Combining the previous two estimates yields
\begin{eqnarray*}
 P_{z}\left\{\tau_{n} \leq \tau_{U}\wedge t  \right\} \leq \phi_{n}^{-1} e^{Ct} \psi(z, 0).  
\end{eqnarray*}
Letting $n\rightarrow \infty$ we see that $P_{z} \{ \tau \leq \tau_{U} \wedge t\}=0$ for all $t\geq 0$, whence $P_{z} \{ \tau \geq \tau_{U}\}=1$.  By path continuity of $Z_{t}$, $P_{z}\{\tau_{U} > \tau_{V}\}=1$.  Hence $P_{z}\{ \tau > \tau_{V}\}=1$ as claimed.      
\end{proof}

\begin{figure}
\caption{Cartoon of Proposition \ref{exitprop}}
\centering
\includegraphics[width=4 in]{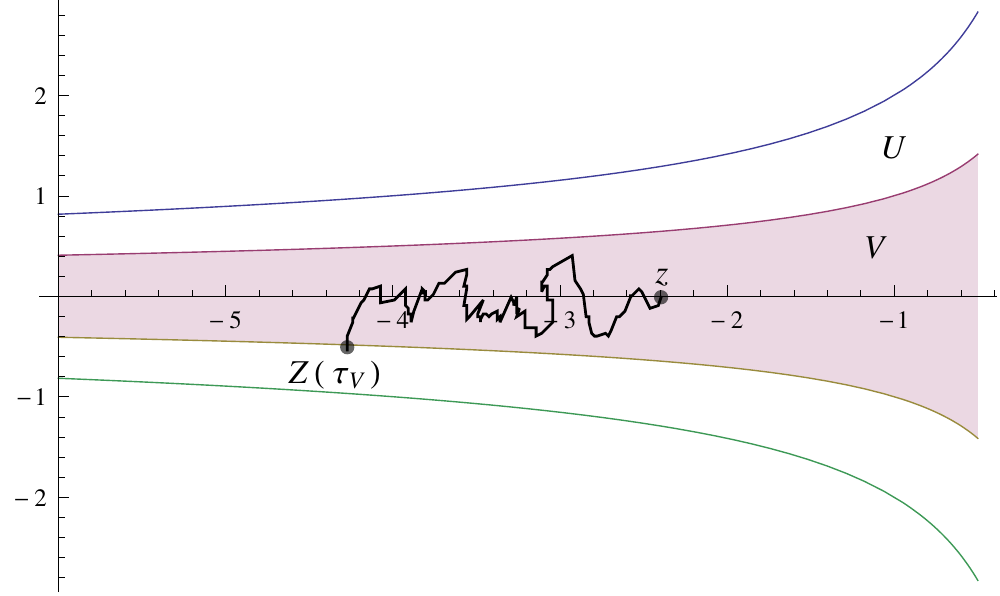}
\end{figure}
\begin{remark}
Note that the same behavior is true if Property (III) of Definition \ref{lyapunovfunction} is replaced by a much weaker bound.  Indeed suppose $U$ is as in Definition \ref{lyapunovfunction} and $\phi:U \rightarrow \mathbb{R}$ satisfies (I) and (II) of Definition \ref{lyapunovfunction}.  If $\phi$ satisfies the bound
\begin{equation*}
L \phi(z) \leq C \phi(z) + D,
\end{equation*}
for all $z\in \mathbb{R}^{2}$ for some $C, D>0$, then the conclusion of Proposition \ref{exitprop} remains valid.  
\end{remark}

The existence of a Lyapunov covering $\{ (\phi_{j}, U_{j}\}_{j=1}^{N}$ guarantees that $Z_{t}$ cannot go to infinity directly through any unbounded region $V\subset U_{j}$ with continuous boundary such that $\partial V \cap \partial U_{j}=\emptyset$.  As we shall see in our case, it is possible to extract subsets $V_{j}\subset U_{j}$ with continuous piecewise smooth boundaries such that $\partial U_{j} \cap \partial V_{j}=\emptyset$ for all $j=1,2,\ldots, N$ and $\{(\phi_{j}, V_{j})\}_{j=1}^{N}$ is a Lyapunov covering.  

\begin{definition}
We call any $\{ (\phi_{j}, V_{j})\}_{j=1}^{N}$ with the above properties \emph{a strong Lyapunov covering subordinate to }$\{(\phi_{j}, U_{j})\}_{j=1}^{N}$.   
\end{definition}   

Hence, given the existence of a strong Lyapunov covering $\{(\phi_{j}, V_{j}) \}_{j=1}^{N}$, $Z_{t}$ cannot go to infinity staying inside one of the $V_{j}$.  Moreover 
\begin{eqnarray*}
\mathbb{R}^{2}=\bigcup_{j=1}^{N} V_{j} \cup B_{R},
\end{eqnarray*} 
for some $R>0$.  Thus it seems likely that $Z_{t}$ cannot diverge in finite time.  However, it is possible that $Z_{t}$ oscillates between two or more regions on its way to infinity in finite time.  To eliminate this possibility we glue our Lyapunov covering together so that we have a globally-defined Lyapunov function $\Phi$. 

The construction of $\Phi$ is done in three stages:
 \begin{description}
 \item[Stage 0:] Reduction of parameters.
 \item[Stage 1:] Existence of a strong Lyapunov covering.
 \item[Stage 2:] Existence of a globally-defined Lyapunov function $\Phi$.  
 \end{description}   


\subsection{Stage 0}

Equation \eqref{initial_system} depends on $a_{1}, a_{2}\in \mathbb{R}$, $\alpha_{1}, \alpha_{2} > 0$, and $\kappa_{1} \geq 0$, $\kappa_{2}>0$.  We thus may expect our globally-defined Lyapunov function to depend on these parameters as well.  To simplify matters, let $\boldsymbol{a}=(a_{1}, a_{2})$, $\boldsymbol{\alpha}=(\alpha_{1}, \alpha_{2})$, $\boldsymbol{\kappa}=(\kappa_{1}, \kappa_{2})$, and $L^{\boldsymbol{\kappa}}_{\boldsymbol{a}, \boldsymbol{\alpha}}$ be the generator \eqref{generator} of $Z_{t}$ that solves equation \eqref{initial_system} with those parameter values.  Define
\begin{eqnarray*}
\mathcal{N}(\boldsymbol{a}, \boldsymbol{\alpha}, \boldsymbol{\kappa})=\left\{\Phi\,:\, \Phi \text{ is a Lyapunov function on } \mathbb{R}^{2} \text{ for } L^{\boldsymbol{\kappa}}_{\boldsymbol{a}, \boldsymbol{\alpha}}\right\}.
\end{eqnarray*}   

We have the following:

\begin{proposition}\label{vary_kappa_lemma}
Let $\alpha_{2}>\alpha_{1}>0$ and suppose there exists \(\kappa_2>0\) such that for all $\boldsymbol{a}\in \mathbb{R}^{2}$ and $\kappa_{1}\geq 0$, \(\mathcal{N}(\boldsymbol{a},\boldsymbol{\alpha},\boldsymbol{\kappa})\neq \emptyset\).  Then \(\mathcal{N}(\boldsymbol{b},\boldsymbol{\alpha},\boldsymbol{\iota})\neq \emptyset \) for all $\boldsymbol{b}\in \mathbb{R}^{2}$, $\boldsymbol{\iota}\in \mathbb{R}_{\geq 0} \times \mathbb{R}_{>0}$.
\end{proposition}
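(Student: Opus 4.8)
The plan is to exploit the dilation symmetry of the quadratic vector field in \eqref{initial_system}. A short computation shows that an isotropic rescaling of the plane together with a rescaling of time converts $L^{\boldsymbol\kappa}_{\boldsymbol a,\boldsymbol\alpha}$ into a positive multiple of a generator of the same type, with $\boldsymbol\alpha$ unchanged; by adjusting the scale one can move $\kappa_2$ to any prescribed value, while $\boldsymbol a$ and $\kappa_1$ simply transform along with it. Since, by Definition \ref{lyapunovfunction}, a Lyapunov function on $\mathbb R^2$ is defined purely through the generator, pulling back a Lyapunov function along this change of variables should yield a Lyapunov function for the rescaled generator.

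Concretely, I would fix the value $\kappa_2>0$ furnished by the hypothesis, let an arbitrary target triple $(\boldsymbol b,\boldsymbol\alpha,\boldsymbol\iota)$ with $\boldsymbol b\in\mathbb R^2$ and $\boldsymbol\iota=(\iota_1,\iota_2)\in\mathbb R_{\geq0}\times\mathbb R_{>0}$ be given, and for $c>0$ consider the linear isomorphism $T_c(\xi,\eta)=(c^{-1}\xi,\,c^{-1}\eta)$. Using the chain rule one checks that, for every $f\in C^2(\mathbb R^2)$,
\begin{equation*}
c\,\bigl(L^{(\kappa_1,\kappa_2)}_{\boldsymbol a,\boldsymbol\alpha}f\bigr)\circ T_c \;=\; L^{(c^3\kappa_1,\,c^3\kappa_2)}_{c\boldsymbol a,\,\boldsymbol\alpha}\bigl(f\circ T_c\bigr),
\end{equation*}
where $c\boldsymbol a=(ca_1,ca_2)$. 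To make the right-hand side equal $L^{\boldsymbol\iota}_{\boldsymbol b,\boldsymbol\alpha}$ I would solve $c^3\kappa_2=\iota_2$, $c^3\kappa_1=\iota_1$, $c\boldsymbol a=\boldsymbol b$, i.e.\ take
\begin{equation*}
c=(\iota_2/\kappa_2)^{1/3},\qquad \kappa_1=\iota_1\kappa_2/\iota_2\geq0,\qquad \boldsymbol a=\boldsymbol b/c\in\mathbb R^2 .
\end{equation*}
These are admissible source parameters with the fixed $\kappa_2$ (note $\kappa_1\geq0$ since $\iota_1\geq0$), so the hypothesis gives some $\Phi\in\mathcal N(\boldsymbol a,\boldsymbol\alpha,(\kappa_1,\kappa_2))$.

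Finally I would verify that $\tilde\Phi:=\Phi\circ T_c$ belongs to $\mathcal N(\boldsymbol b,\boldsymbol\alpha,\boldsymbol\iota)$. Smoothness is clear; $\tilde\Phi(\xi,\eta)\to\infty$ as $|(\xi,\eta)|\to\infty$ because $T_c$ is a linear isomorphism; and if $L^{(\kappa_1,\kappa_2)}_{\boldsymbol a,\boldsymbol\alpha}\Phi\leq -C\Phi+D$ on $\mathbb R^2$, then by the displayed identity
\begin{equation*}
L^{\boldsymbol\iota}_{\boldsymbol b,\boldsymbol\alpha}\tilde\Phi = c\,\bigl(L^{(\kappa_1,\kappa_2)}_{\boldsymbol a,\boldsymbol\alpha}\Phi\bigr)\circ T_c \leq -cC\,\tilde\Phi + cD,
\end{equation*}
which is the required bound with positive constants $cC$ and $cD$. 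This gives $\mathcal N(\boldsymbol b,\boldsymbol\alpha,\boldsymbol\iota)\neq\emptyset$, as desired.

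I do not anticipate a serious obstacle: the only nontrivial point is computing the coefficients in the change-of-variables identity correctly, and in particular recognizing that the dilation must scale $\xi$ and $\eta$ by the \emph{same} factor — this is forced by the requirement that, once $\alpha_1$ (equivalently $\alpha_2$) is preserved, the coefficient $1$ of $Y_t^2$ in the $X$-equation also be preserved. After that, everything is bookkeeping.
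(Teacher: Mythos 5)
Your proposal is correct and is essentially the paper's own argument. The paper's proof defines $c=(\kappa_2/\iota_2)^{1/3}$, sets $\boldsymbol a=c\boldsymbol b$, $\boldsymbol\kappa=(c^3\iota_1,\kappa_2)$, takes $\Psi\in\mathcal N(\boldsymbol a,\boldsymbol\alpha,\boldsymbol\kappa)$, and defines $\Phi(z)=\Psi(cz)$; this is the same isotropic dilation as yours (your $c$ is simply the reciprocal of the paper's), leading to the identical chain-rule identity $L_{\boldsymbol b,\boldsymbol\alpha}^{\boldsymbol\iota}\Phi=\tfrac1c\,(L_{\boldsymbol a,\boldsymbol\alpha}^{\boldsymbol\kappa}\Psi)\circ(cz)$ and the same conclusion.
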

\begin{proof}
Let $\boldsymbol{\iota}=(\iota_{1}, \iota_{2})\in  \mathbb{R}_{\geq 0} \times \mathbb{R}_{>0}$ and $\boldsymbol{b}\in \mathbb{R}^{2}=(b_{1}, b_{2})$ be arbitrary.  Define \(c=(\kappa_2\iota_2^{-1})^{\frac{1}{3}}\), \(\boldsymbol{a}=c\boldsymbol{b}\), and \(\boldsymbol{\kappa}=(c^3\iota_1,\kappa_2)\).  By assumption, there exists \(\Psi\in \mathcal{N}(\boldsymbol{a},\boldsymbol{\alpha},\boldsymbol{\kappa})\).  Define \(\Phi(z)=\Psi(cz)\).  It is easy to see that $\Phi$ satisfies Properties (I) and (II) of Definition \ref{lyapunovfunction} in all of $\mathbb{R}^{2}$.  To see (III), we use the  chain rule to obtain:
\begin{eqnarray*}
 L_{\boldsymbol{b},\boldsymbol{\alpha}}^{\boldsymbol{\iota}}\Phi(z)&=&\frac{1}{c}(L_{\boldsymbol{a},\boldsymbol{\alpha}}^{\boldsymbol{\kappa}}\Psi)(cz)\\
 &\leq& -C\Psi(cz)+ D\\
 &=& -C \Phi(z)+D,
\end{eqnarray*}
for some $C, D>0$.    
\end{proof} 

Using Proposition \ref{vary_kappa_lemma}, it is enough to show, given $\alpha_{2}>\alpha_{1}$, that there exists $\kappa_{2}>0$ sufficiently small such that $\mathcal{N}(\boldsymbol{a}, \boldsymbol{\alpha}, \boldsymbol{\kappa}) \neq \emptyset$ for all $\boldsymbol{a}\in \mathbb{R}^{2}$ and $\kappa_{1}\geq 0$.  This is equivalent to saying that for all $\alpha_{2}> \alpha_{1}$, there exists $\kappa_{2}=\kappa_{2}(\boldsymbol{\alpha})>0$ and $\Phi^{\boldsymbol{\kappa}}_{\boldsymbol{a}, \boldsymbol{\alpha}}$ such that $\Phi^{\boldsymbol{\kappa}}_{\boldsymbol{a}, \boldsymbol{\alpha}}\in \mathcal{N}(\boldsymbol{a}, \boldsymbol{\alpha}, \boldsymbol{\kappa})$ for all $\boldsymbol{a}\in \mathbb{R}^{2}$ and all $\kappa_{1}\geq 0$.  To obtain such a function $\Phi^{\boldsymbol{\kappa}}_{\boldsymbol{a}, \boldsymbol{\alpha}}$, we first find a Lyapunov covering.          


\subsection{Stage 1}

We will now construct families of strong Lyapunov coverings that depend on $\boldsymbol{\alpha}, \, \boldsymbol{a},$ and $\boldsymbol{\kappa}$.  By Proposition \ref{vary_kappa_lemma}, we will fix $\alpha_{2}>\alpha_{1}$ and pick $\kappa_{2}=\kappa_{2}(\boldsymbol{\alpha})>0$ such that $(\phi_{i}, U_{i})$ for $i=1,2,\ldots, 5$ below (which will depend on $\boldsymbol{\alpha}$, $\boldsymbol{a}$, and $\boldsymbol{\kappa}$) is a Lyapunov covering for all $\boldsymbol{a}\in \mathbb{R}^{2}$ and all $\boldsymbol{\kappa} \in \mathbb{R}_{\geq 0} \times (0, \kappa_{2}(\boldsymbol{\alpha})]$.  

To allow further flexibility in Stage 1 and Stage 2 of the procedure, there is a multitude of parameters.  We have thus provided a list of all parameters and their chosen values in the appendix.  Moreover, to illustrate the regions $U_{i}$ for $i=1,2,\ldots 5$, we have provided Figures 3.2-3.5.  In what follows, the most crucial ingredient is $(\phi_{1}, U_{1})$ where $U_{1}$ covers the unstable trajectory along the negative $X$-axis.


\begin{lf}\label{construct_phi1}
Define
\begin{eqnarray*}
\phi_1(x,y)=C_1(5|x|^\beta-y^2|x|^{\beta+1})
\end{eqnarray*}
on 
\begin{eqnarray*}
U_1=\{x<-2\}\cap\left\{|y|<2|x|^{-1/2}\right\}.
\end{eqnarray*}
Then $\phi_{1}$ is a Lyapunov function on $U_{1}$ for all $\boldsymbol{a}\in \mathbb{R}^{2}$ and all $\kappa_{1}\geq 0$.  

\end{lf}
\begin{proof}
The region excludes the $y$-axis, so \(\phi_1\in C^{\infty}(U_{1})\).  Note by the choice of $U_{1}$, \(\phi_1(x,y)>C_1 |x|^\beta\), therefore \(\phi_1(x,y)\rightarrow \infty\) as \((x,y)\rightarrow \infty,\, (x,y)\in U_{1}$.  Thus (I) and (II) are satisfied in $U_{1}$.  To see (III), note that:
\begin{eqnarray*}
L\phi_1 (x,y) \leq-C_1(2\kappa_2-5\alpha_1 \beta)|x|^{\beta+1}+ \mathcal{O}(|x|^{\beta}).
\end{eqnarray*}
Therefore there exists $D_{1}>0$ such that:
\begin{eqnarray} \label{estimate1}
L\phi_1(x,y)&\leq&-\frac{C_1}{2}(2\kappa_2-5 \alpha_1\beta)|x|^{\beta+1} +D_{1} \\
&\leq&-\frac{1}{10}(2\kappa_2-5\alpha_1 \beta)\phi_1(x,y) + D_{1}\nonumber\\
&\leq &-\frac{\kappa_{2}}{80}\phi_{1}(x,y) + D_{1},\nonumber 
\end{eqnarray}
where the last inequality follows by the choice (see the Appendix) of $\sigma \in (\alpha_{1}/\alpha_{2}, 1)$, $\delta=\frac{\kappa_{2}}{8\alpha_{1}}$, and $\beta = (2+\sigma) \delta$.  
\end{proof}

\begin{figure}\label{fig2}
\caption{The regions $U_{1}$ and $U_{2}$, along with their intersection $U_{1}\cap U_{2}$.}
\centering
\includegraphics[width=4.5in]{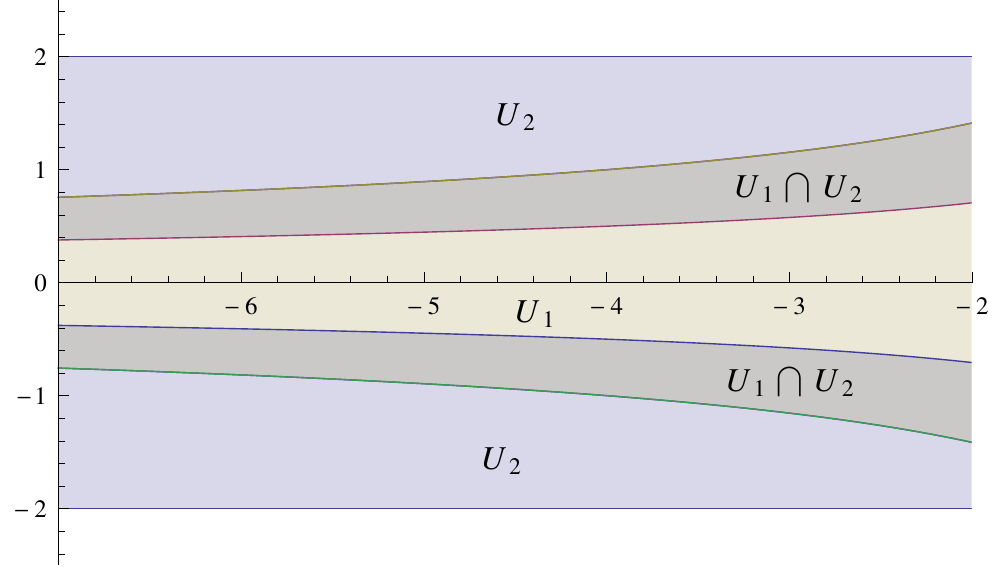}
\end{figure}

\begin{lf}\label{construct_phi2}
Define
\begin{eqnarray*}
\phi_2(x,y)=C_2\frac{|x|^{2\delta}+|y|^{2\delta}}{|y|^{2\sigma\delta}}
\end{eqnarray*}
on 
\begin{eqnarray*}
U_2=\{x<-2\}\cap\left\{ |x|^{-1/2} <|y|<2\right\}.
\end{eqnarray*}
Then $\phi_{2}$ is a Lyapunov function on $U_{2}$ for all $\boldsymbol{a}\in \mathbb{R}^{2}$ and all $\kappa_{1} \geq 0$.  
\end{lf}
\begin{proof}
This region excludes both $x$ and $y$-axes, therefore $\phi_2\in C^{\infty}(U_{2})$.  Note that \(\phi_2 (x,y) >4^{-\sigma\delta}C_2|x|^{2\delta}\) on \(U_2\), whence \(\phi_2 (x,y)\rightarrow \infty\) as \((x,y)\rightarrow \infty,\, (x,y) \in U_{2}\).  Thus $\phi_{2}$ satisfies (I) and (II) on $U_{2}$.  To see (III), note that:  
\begin{eqnarray}\label{estimate2p}
\nonumber L\phi_2(x,y)&=&2C_2\delta\frac{|x|^{2\delta + 1}}{|y|^{2\sigma \delta}}\left((\alpha_{1}-\sigma \alpha_{2}) + \kappa_{2} \sigma (2 \sigma \delta + 1) |x|^{-1} |y|^{-2} + o(1) \right)\\
&\leq& 2C_2\delta\frac{|x|^{2\delta + 1}}{|y|^{2\sigma \delta}}\left((\alpha_{1}-\sigma \alpha_{2}) + \kappa_{2} \sigma (2 \sigma \delta + 1)  + o(1) \right)
\end{eqnarray}
By the choice of $\kappa_{2}$,
\begin{eqnarray*}
(\alpha_{1}-\sigma \alpha_{2}) + \kappa_{2} \sigma( 2\sigma \delta + 1) <0.  
\end{eqnarray*}
Therefore there exists $D_{2}>0$ such that 
\begin{eqnarray}\label{estimate2}
L\phi_2(x,y)&\leq &  \delta ( (\alpha_{1}-\sigma \alpha_{2}) + \kappa_{2} \sigma ( 2\sigma \delta + 1)) \phi_{2}(x,y) + D_{2}.  
\end{eqnarray}
\end{proof}

\begin{lf}\label{construct_phi3}
Define 
\begin{eqnarray*}
\phi_3(x,y)=C_3\left(\frac{Dx^2+y^2}{|y|^{2\sigma}}\right)^\delta
\end{eqnarray*}
on 
\begin{eqnarray*}
U_3=\{x<-1/2\}\cap\{|y|>1\}.
\end{eqnarray*}
For all $\boldsymbol{a}\in \mathbb{R}^{2}$ and all $\boldsymbol{\kappa}\in \mathbb{R}^{2}$,  $\phi_{3}$ is a Lyapunov function in $U_{3}$.  
\end{lf}
\begin{proof}
This region excludes the \(x\) and \(y\)-axes, so \(\phi_3\in C^{\infty}(U_{3})\).  Moreover, it is clear that \(\phi_3 (x,y) \rightarrow \infty\) as \((x,y)\rightarrow\infty, \, (x,y) \in U_{3}\).  Thus $\phi_{3}$ satisfies (I) and (II) in $U_{3}$.  Since $\delta\in (0,1)$, we note that the $\delta(\delta-1)$ terms in $\partial_{xx}\phi_{3}(x,y)$ and $\partial_{yy} \phi_{3}(x,y)$ are negative.  Thus we have
\begin{eqnarray}\label{estimate2p1}
\nonumber L \phi_{3}(x,y)&\leq & 2 \delta \phi_{3}(x,y) \bigg{[} (a_{1} x-\alpha_{1} x^{2} + y^{2}) \frac{Dx}{Dx^{2}+y^{2}} \\
\nonumber &\, & \, + \, (a_{2}y-\alpha_{2}xy)\bigg{(}\frac{-\sigma D \text{sgn}(y) x^{2} + (1-\sigma) \text{sgn}(y) y^{2}}{|y|(Dx^{2}+y^{2})} \bigg{)}\\
\nonumber &\, & \, + \, \frac{\kappa_{2} \sigma (2\sigma+1) Dx^{2}}{y^{2}( Dx^{2}+y^{2})} + o(1)\bigg{]}\\
&\leq &2 \delta \phi_{3}(x,y) \bigg{[} D(\alpha_{1}-\sigma \alpha_{2}) \frac{|x|^{3}}{Dx^{2}+y^{2}}\\
\nonumber &\,& \, + \,\big{(}(D-\alpha_{2}(1-\sigma)) x + |a_{2}| (1-\sigma)\big{ )}\frac{y^{2}}{Dx^{2}+y^{2}}\\
\nonumber &\,& \, + \, \mathcal{O}\left(\frac{x^{2}}{Dx^{2}+y^{2}} \right)+ o(1)\bigg{]}.  
\end{eqnarray}
By the choice of $\sigma \in (\alpha_{1}/\alpha_{2}, 1)$ and $D> \max\left(1, (\alpha_{2}+ 2|a_{2}|)(1-\sigma)\right)$ there exist constants $C_{3}', D_{3}>0$ such that 
\begin{eqnarray*}
L\phi_{3}(x,y) &\leq & -C_{3}' \phi_{3}(x,y) + D_{3},
\end{eqnarray*}
for $(x,y) \in U_{3}$.  This proves (III) is valid for $\phi_{3}$ in $U_{3}$.  
\end{proof}

\begin{figure}
\caption{The regions $U_{2}$ and $U_{3}$}
\centering
\includegraphics[width=4in]{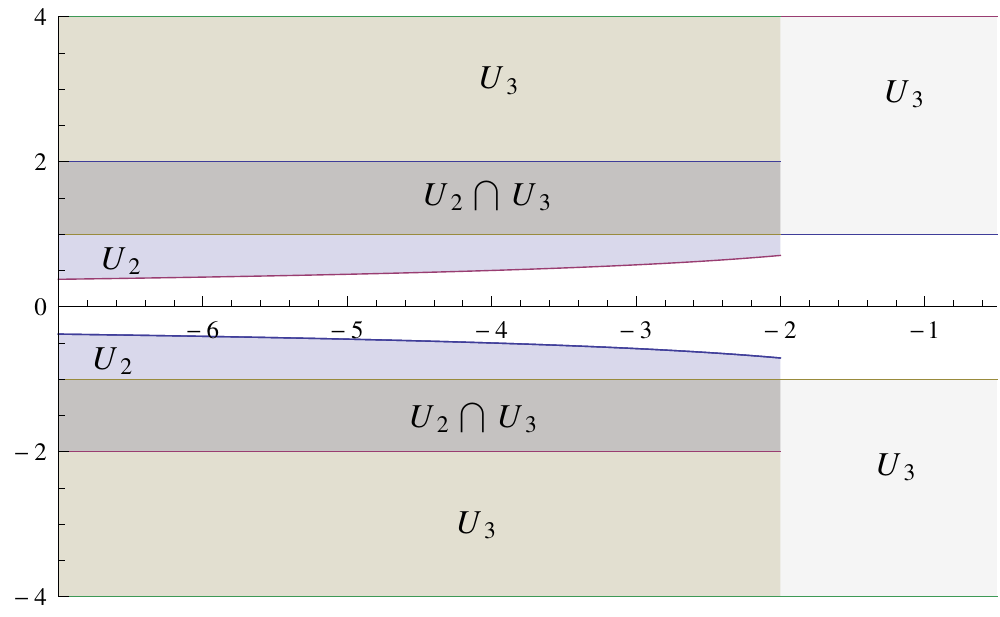}
\end{figure}

\begin{lf}\label{construct_phi4}
Define
 \begin{eqnarray*}
 \phi_4(x,y)=C_4\left(-x+N|y|^{2(1-\sigma)\delta}\right)
 \end{eqnarray*}
on 
\begin{eqnarray*}
U_4=\{-1<x<N\}\cap\{|y|>1\}.
\end{eqnarray*}
For all $\boldsymbol{a}, \, \boldsymbol{\kappa}\in \mathbb{R}^{2}$, $\phi_4$ is a Lyapunov function in $U_{4}$.  
\end{lf}
\begin{proof}
This region excludes the \(x\)-axis, so \(\phi_4\in C^{\infty}(U_{4})\).  As \((x,y)\rightarrow\infty\) with $(x,y) \in U_{4}$, \(|y|\rightarrow\infty\) since \(x\) is bounded. Therefore \(\phi_4(x,y)\rightarrow\infty\) as $(x,y) \rightarrow \infty,\, (x,y) \in U_{4}$.  Thus $\phi_{4}$ satisfies (I) and (II) in $U_{4}$.  To see (III) is valid in $U_{4}$, note that
\begin{eqnarray*}
L\phi_4(x,y)&\leq& -C_4y^2+O(|y|^{2(1-\sigma)\delta}).  
\end{eqnarray*}
Note this implies that there exist $C_{4}', D_{4}>0$ such that 
\begin{eqnarray*}
L\phi_{4}(x,y) &\leq & -C_{4}' \phi_{4}(x,y) + D_{4},
\end{eqnarray*}
for $(x,y) \in U_{4}$.  

\end{proof}

\begin{figure}
\caption{The regions $U_{3}$ and $U_{4}$ with $N\geq 1$.}
\centering
\includegraphics[width=4in]{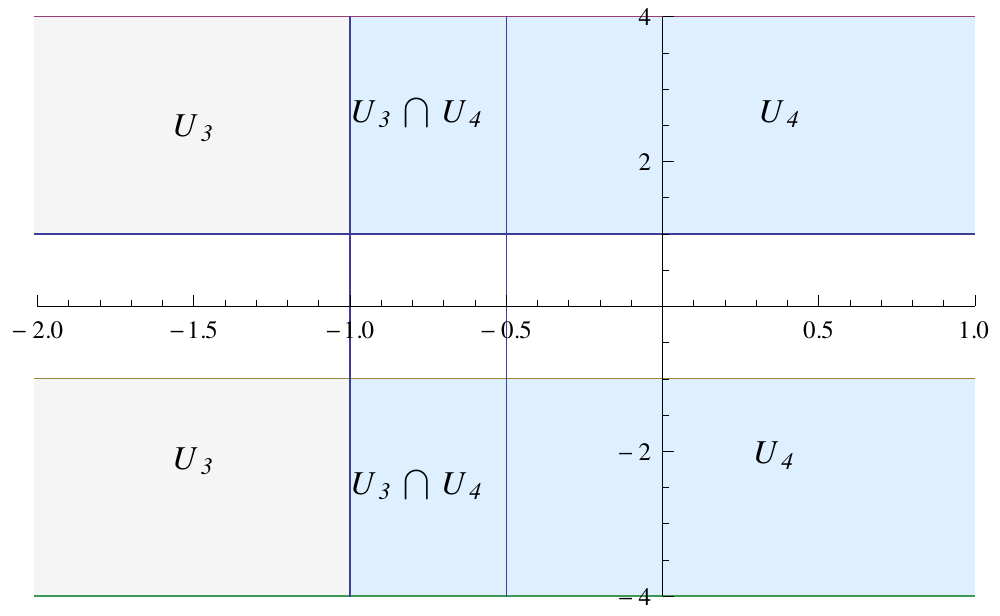}
\end{figure}

\begin{lf}\label{construct_phi5}
Define
\begin{eqnarray*}
\phi_5(x,y)=C_5(\eta x^2+y^2)^\gamma
\end{eqnarray*}
on
\begin{eqnarray*}
U_5=\{x>N/2\}.  
\end{eqnarray*}
For all $\boldsymbol{a}, \, \boldsymbol{\kappa}\in \mathbb{R}^{2}$, $\phi_{5}$ is a Lyapunov function on $U_{5}$.
\end{lf}
\begin{proof}
This region excludes the origin, so \(\phi_5\in C^{\infty}(U_{5})\).  Moreover \(\phi_5\rightarrow \infty\) as \((x,y)\rightarrow \infty\).  Thus $\phi_{5}$ satisfies (I) and (II) in $U_{5}$.  Since $\gamma\in (0,1)$ we see that the $\gamma(\gamma-1)$ terms in $\partial_{xx} \phi_{5}(x,y)$ and $\partial_{yy} \phi_{5}(x,y)$ are negative.  Thus we have
\begin{eqnarray*}
L \phi_{5}(x,y) &\leq &2 \gamma\phi_{5}(x,y) \bigg{[} (a_{1}x - \alpha_{1}x^{2}+y^{2}) \frac{\eta x}{\eta x^{2}+y^{2}}\\
&\,& \, +\, (a_{2} y -\alpha_{2} x y ) \frac{y}{\eta x^{2}+y^{2}} + o(1)\bigg{]}\\
&\leq & 2\gamma \phi_{5}(x,y) \bigg{[} -\frac{\alpha_{1}\eta x^{3}}{\eta x^{2}+y^{2}}- \big{(}(\alpha_2-\eta)x-|a_2|\big{)} \frac{y^{2}}{\eta x^{2}+y^{2}}\\
&\,& \, + \, \frac{|a_{1}|\eta x^{2}}{\eta x^{2}+y^{2}}+ o(1)\bigg{]}.  
\end{eqnarray*}
By the choice of $\eta=\alpha_2/2$ and $N>4|a_{2}|/\alpha_{2}$, there exist constants $C_{5}', D_{5}>0$ such that
\begin{eqnarray*}
L\phi_{5}(x,y) &\leq & -C_{5}' \phi_{5}(x,y) + D_{5},
\end{eqnarray*}
for $(x,y) \in U_{5}$.  Hence $\phi_{5}(x,y)$ satisfies (III) in $U_{5}$.    
\end{proof}

\begin{figure}
\caption{The regions $U_{4}$ and $U_{5}$ with $N=2$.}
\centering
\includegraphics[width=4in]{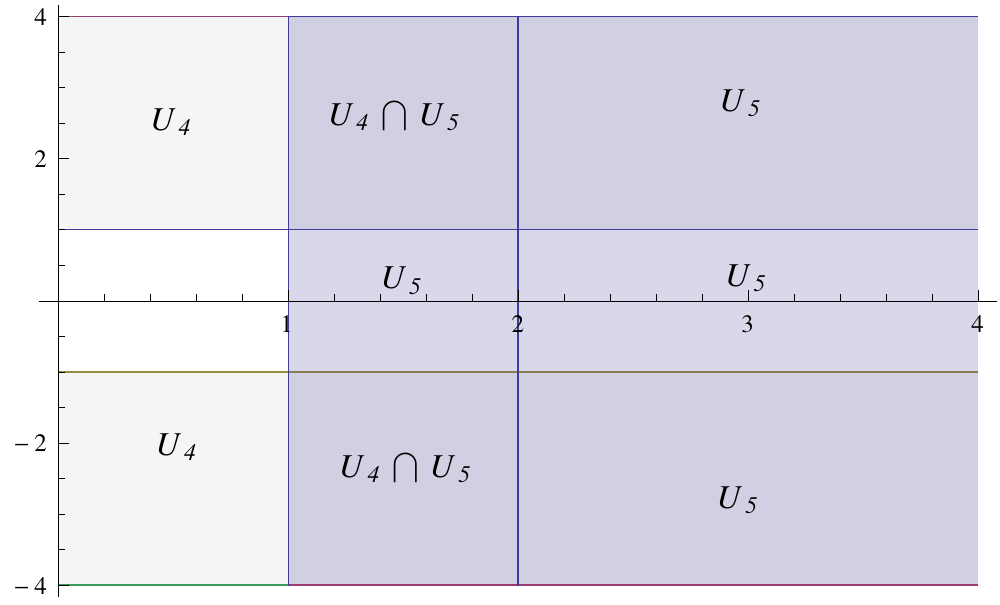}
\end{figure}

By construction, the regions $U_{1}, U_{2}, U_{3}, U_{4}, U_{5}$ overlap in such a way that we can extract a strong Lyapunov covering $(\phi_{1}, V_{1})$, $(\phi_{2}, V_{2})$, \ldots, $(\phi_{5}, V_{5})$ subordinate to the Lyapunov covering $(\phi_{1}, U_{1})$, $(\phi_{1}, U_{2})$, \ldots, $(\phi_{5}, U_{5})$.  We now proceed onto Stage 2.  


\subsection{Stage 2}
We now prove that $\phi_{1}, \, \phi_{2}, \, \, \ldots, \, \phi_{5}$ constructed above can be ``patched" together to yield a globally-defined Lyapunov function $\Phi$ as in Lemma \ref{lem1}.  By Proposition \ref{vary_kappa_lemma}, to prove Lemma \ref{lem1} it is enough to show that for all $\alpha_{2}>\alpha_{1}$ there exists $\kappa_{2}=\kappa_{2}(\boldsymbol{\alpha})>0$ such that 
\begin{eqnarray*}
\mathcal{N}(\boldsymbol{a}, \boldsymbol{\alpha}, \boldsymbol{\kappa})\neq \emptyset
\end{eqnarray*}
for all $\boldsymbol{a}\in \mathbb{R}^{2}$ and all $\kappa_{1} \geq 0$.  In what follows, for all $\alpha_{2}>\alpha_{1}$, we construct $\Phi=\Phi_{\boldsymbol{a}, \boldsymbol{\alpha}}^{\boldsymbol{\kappa}}$ such that for some $\kappa_{2}=\kappa_{2}(\boldsymbol{\alpha})>0$:
\begin{eqnarray*}
\Phi_{\boldsymbol{a}, \boldsymbol{\alpha}}^{\boldsymbol{\kappa}}\in \mathcal{N}(\boldsymbol{a}, \boldsymbol{\alpha}, \boldsymbol{\kappa}), 
\end{eqnarray*}
for all $\boldsymbol{a}\in \mathbb{R}^{2}$ and for all $\kappa_{1} \geq 0$.  Hence the choice of $\kappa_{2}$ can depend on $\boldsymbol{\alpha}$ but not on $\boldsymbol{a}$ nor on $\kappa_{1}$.

To obtain $\Phi$ as above, we define smooth auxiliary functions  
\begin{eqnarray*}
\rho_{i,i+1}:U_{i}\cup U_{i+1}\rightarrow [0,1]
\end{eqnarray*}
such that $\rho_{i, i+1} = 0$ on $U_{i}$ and $\rho_{i, i+1}=1$ on $U_{i+1}$ for $i=1,\, 2, \ldots,\, 5$, and prove that 
\begin{eqnarray}\label{phipatch}
\phi_{i, i+1} :=\rho_{i, i+1} \phi_{i+1} + (1-\rho_{i, i+1}) \phi_{i}
\end{eqnarray}   
is a Lyapunov function on $U_{i} \cup U_{i+1}$ for $i =1, \, 2, \ldots, \, 5$.  Note then we can choose  $\Phi \in C^{\infty}(\mathbb{R}^{2})$ such that outside of a sufficiently large ball $B_{R}$, we have
\begin{eqnarray*}
\Phi= \begin{cases}
\phi_{i} & \text{ on } U_{i} \setminus (\cup_{j=1}^{4} (U_{j} \cap U_{j+1})), \, i =1, 2, \ldots, 5\\
\phi_{i, i+1}& \text{ on } U_{i} \cap U_{i+1}, \, i =1, 2, \ldots, 5.  
\end{cases}
\end{eqnarray*} 
It follows then by construction, 
\begin{enumerate}
\item $\Phi(z) \rightarrow \infty$ as $|z|\rightarrow \infty$.
\item  There exist constants $C, D>0$ such that $L\Phi(z) \leq -C \Phi(z) + D$ for all $z\in \mathbb{R}^{2}$.  
\end{enumerate} 

Note that by expression \eqref{phipatch}, for the newly defined $\phi_{i, i+1}$ we do not need to verify (I) and (II) in Definition \ref{lyapunovfunction}.  All we need to do is prove (III) on $U_{i}\cap U_{i+1}$.  To simplify matters later, we note that for $i=1,2,\ldots, 5$: 
\begin{eqnarray}\label{Lpatch}
 \, \, \, \, \, \, \, \, \, \, L \phi_{i, i+1}&=& \rho_{i, i+1} L \phi_{i+1} + (1- \rho_{i, i+1}) L\phi_{i} \\
\nonumber&\, & \, + \, (a_{1}x-\alpha_{1}x^{2}+y^{2})(\phi_{i+1}-\phi_{i})\partial_{x}\rho_{i, i+1}\\
\nonumber&\,& \, +\, (a_{2} y - \alpha_{2}xy)  ( \phi_{i+1}-\phi_{i})\partial_{y}\rho_{i, i+1}\\
\nonumber&\,& \, + \, \kappa_{1} (\phi_{i+1}-\phi_{i}) \partial_{xx} \rho_{i, i+1} + 2\kappa_{1} \partial_{x}( \phi_{i+1}-\phi_{2}) \partial_{x} \rho_{i, i+1}\\
\nonumber&\,& \, + \, \kappa_{2} (\phi_{i+1}-\phi_{i}) \partial_{yy} \rho_{i, i+1} + 2\kappa_{2} \partial_{y}( \phi_{i+1}-\phi_{2}) \partial_{y} \rho_{i, i+1},
\end{eqnarray}
on $U_{i} \cap U_{i+1}$.  


\begin{patch}
First we will patch together \(\phi_1\) and \(\phi_2\) to yield $\phi_{1,2}$, a Lyapunov function on $U_{1}\cup U_{2}$.  To do this, we first define the function $\rho_{1,2}$. Let $q:\mathbb{R}^{2}\rightarrow \mathbb{R}$ and $f, \, g:\mathbb{R}\rightarrow \mathbb{R}$ be defined by 
\begin{eqnarray*}
q(x,y)&=&|x|^{1/2}|y|-1,\\
   f(t)& =& \begin{cases}    
 \exp(\frac{-1}{1-(2t-1)^2}) & \text{if } t \in [0,1]\\      
 0 &\text{if } t \notin [0,1]   
\end{cases},\\
g(t)&=& \frac{1}{\|f\|_{1}}\int_{-\infty}^{t} f(s) \, ds.
\end{eqnarray*}
Let $\rho_{1,2}: U_{1}\cup U_{2}\rightarrow [0,1]$ be defined by
\begin{equation*}
\rho_{1,2}(x,y)=g(q(x,y)).  
\end{equation*} 
It is easy to see that $\rho_{1,2}\in C^{\infty}(U_{1} \cup U_{2})$ satisfies $\rho_{1,2}=0$ on $U_{1}$ and $\rho_{1,2}=1$ on $U_{2}$.  

\begin{claim}
Let \(\phi_{1,2}=\rho_{1,2}\phi_2+(1-\rho_{1,2})\phi_1\).  There exists \(\kappa_2(\boldsymbol{\alpha})>0\) such that $\phi_{1,2}$ is a Lyapunov function on $U_{1} \cup U_{2}$ 
 for all $\boldsymbol{a}\in \mathbb{R}^{2}$ and all $\kappa_{1}\geq 0$.  
\end{claim}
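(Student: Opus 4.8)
The plan is to verify only Property~(III) of Definition~\ref{lyapunovfunction} for $\phi_{1,2}$: Properties~(I)--(II) are inherited from $\phi_1,\phi_2$ through the convex combination (note $\phi_{1,2}\ge\min(\phi_1,\phi_2)$), and $\rho_{1,2}\in C^\infty(U_1\cup U_2)$ because $q=|x|^{1/2}|y|-1$ is smooth wherever $q\ge0$ (there $y\neq0$) while $\rho_{1,2}\equiv0$ on the open set $\{q<0\}$. Moreover $\{q\le0\}\cap(U_1\cup U_2)=U_1\setminus U_2$, where $\rho_{1,2}\equiv0$, so $\phi_{1,2}=\phi_1$ and (III) is the estimate already proved for $\phi_1$; likewise $\{q\ge1\}\cap(U_1\cup U_2)=U_2\setminus U_1$, where $\phi_{1,2}=\phi_2$. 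Since $\phi_{1,2}$ and $L\phi_{1,2}$ are bounded on bounded subsets of $U_1\cup U_2$, it remains to prove (III) on $U_1\cap U_2\cap\{|x|\ge R\}$ for $R$ large, where $s:=|x|y^2\in(1,4)$, $q\in(0,1)$, $|y|\asymp|x|^{-1/2}$, and $\phi_1=C_1|x|^\beta(5-s)\ge C_1|x|^\beta$.

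On this set I would expand $L\phi_{1,2}$ via \eqref{Lpatch}. The convex-combination part $\rho_{1,2}L\phi_2+(1-\rho_{1,2})L\phi_1$ is $\le-c_0\kappa_2|x|^{\beta+1}+D$: this follows from \eqref{estimate1} for $\phi_1$ and from \eqref{estimate2p} for $\phi_2$ restricted to the overlap, where $|x|^{-1}|y|^{-2}=s^{-1}\le1$ is harmless and $\tfrac{|x|^{2\delta+1}}{|y|^{2\sigma\delta}}=s^{-\sigma\delta}|x|^{\beta+1}$; here $c_0>0$ is independent of $\boldsymbol a,\kappa_1$ and bounded away from $0$ for small $\kappa_2$ (that both negative leading coefficients are $\Theta(\kappa_2)$ comes from $\delta=\kappa_2/(8\alpha_1)$). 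The first-order error in \eqref{Lpatch} equals $(\phi_2-\phi_1)\,g'(q)\,(b\cdot\nabla q)$, $b$ the drift of \eqref{initial_system}, and a short computation of $\nabla q$ gives $b\cdot\nabla q=\big(\tfrac{\alpha_1}{2}+\alpha_2\big)|x|^{3/2}|y|+\mathcal O(1)\ge c|x|$ for $|x|$ large, $c>0$. Now $\phi_2/\phi_1=(C_2/C_1)\,h$ with $h$ bounded above on $U_1\cap U_2$ (its numerator is $\le(1+4^\delta)|x|^\beta$ and its denominator $=|x|^\beta(5-s)\ge|x|^\beta$), so choosing $C_1/C_2$ large enough forces $\phi_2<\phi_1$, even $\phi_1-\phi_2\ge\tfrac12C_1|x|^\beta$, on the overlap; hence the first-order error is $\le-c_2\,g'(q)\,|x|^{\beta+1}$ with $c_2>0$ not depending on $\kappa_2$. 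The remaining terms of \eqref{Lpatch} are estimated crudely: the $\kappa_1$-terms are $\mathcal O(|x|^{\beta-2})$, negligible against $|x|^{\beta+1}$ once $R$ (allowed to depend on $\kappa_1$) is large, while the $\kappa_2$-terms are $\le\kappa_2K\big(|g''(q)|+g'(q)\big)|x|^{\beta+1}$ with $K$ independent of $\kappa_2$, the worst being $\kappa_2(\phi_2-\phi_1)\partial_{yy}\rho_{1,2}=\kappa_2(\phi_2-\phi_1)g''(q)|x|$.

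Collecting these, on $U_1\cap U_2\cap\{|x|\ge R\}$ one gets
\begin{equation*}
L\phi_{1,2}\;\le\;|x|^{\beta+1}\Big[-c_0\kappa_2+\kappa_2K|g''(q)|+(\kappa_2K-c_2)g'(q)\Big]+D'.
\end{equation*}
I would then fix $\kappa_2=\kappa_2(\boldsymbol\alpha)$ small. Because $g'=f/\|f\|_1$ is a fixed bump, flat at $q=0,1$, its derivative $g''$ is continuous and vanishes at $q=0$ and $q=1$; hence $\mathcal K:=\{q\in(0,1):|g''(q)|\ge c_0/(2K)\}$ is a \emph{compact} subset of $(0,1)$, on which $g'\ge g_0>0$. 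Off $\mathcal K$ the bracket is $\le-\tfrac12c_0\kappa_2$; on $\mathcal K$, taking $\kappa_2K\le c_2$ gives $(\kappa_2K-c_2)g'(q)\le-\tfrac12c_2g_0$, which dominates $\kappa_2K|g''(q)|\le\kappa_2K\|g''\|_\infty$ once $\kappa_2$ is small enough. Either way the bracket is $\le-\tfrac12c_0\kappa_2$, so $L\phi_{1,2}\le-\tfrac12c_0\kappa_2|x|^{\beta+1}+D'\le-C\phi_{1,2}+D$ (since $\phi_{1,2}\le\max(\phi_1,\phi_2)=\mathcal O(|x|^\beta)$ there). Together with the off-transition and bounded pieces this establishes (III) on $U_1\cup U_2$, with $\kappa_2$ depending on $\boldsymbol\alpha$ alone (through $\sigma,\alpha_1,\alpha_2$, the chosen $C_1,C_2$, and constants coming from $f$), while $C,D$ may depend on $\boldsymbol a$ and $\kappa_1$.

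The main obstacle is precisely this last balancing. The sign-indefinite term $\kappa_2(\phi_2-\phi_1)g''(q)|x|$ is of the same order $|x|^{\beta+1}$ as every negative term available, so it cannot merely be absorbed into $D$; the estimate closes only because (i) forcing $\phi_2<\phi_1$ converts the first-order error into a negative term of size $g'(q)|x|^{\beta+1}$ with a $\kappa_2$-independent constant, and (ii) $|g''|$ is large only on a compact subset of $(0,1)$ where $g'$ is bounded below. One must also check that none of the constants $c_0,c_2,g_0$ that beat the bad term secretly carries a factor $\kappa_2$, so that ``$\kappa_2$ small'' suffices without $\kappa_2$ having to depend on $\boldsymbol a$ or $\kappa_1$.
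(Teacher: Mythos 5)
Your proposal is correct and follows essentially the same route as the paper: the same decomposition into convex-combination part, sign-favorable first-order part (which works because $\phi_1>\phi_2$ on the overlap), and the $\kappa_2$ second-order part, handled by splitting the transition strip into the near-boundary region (where the second derivative of the bump is small, so the convex-combination term dominates) and the interior (where $g'$ is bounded below so the first-order term dominates for $\kappa_2$ small). The paper's bound is slightly more explicit — it uses the identity-type estimate $|f'(t)|\le C\,f(t)/(t^2(1-t)^2)$ to control the second-order term — but this is the same mechanism as your compactness argument for $\{|g''|\ge c_0/(2K)\}$.
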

\begin{proof}
By estimates \eqref{estimate1} and \eqref{estimate2} and our choice of parameters, we have  
\begin{align}\label{dominant_term12}
\rho_{1,2}L\phi_2+(1-\rho_{1,2})L\phi_1\leq-\kappa_2E_1|x|^{(2+\sigma)\delta+1} + \max(D_{1}, D_{2}),
\end{align}
for some constant \(E_1>0\) independent of both $\boldsymbol{a}$ and $\boldsymbol{\kappa}$.  
For $i=1$ the remaining terms on the right-hand side of relation \eqref{Lpatch} are equal to  
\begin{eqnarray}\label{bound12}
&\,&-\alpha_1x^2(\phi_2-\phi_1)\partial_{x}\rho_{1, 2}-\alpha_2xy(\phi_2-\phi_1)\partial_{y} \rho_{1,2}\\
&\,&+\kappa_2(\phi_2-\phi_1)\partial_{yy} \rho_{1,2}+2\kappa_2\partial_y(\phi_2-\phi_1)\partial_{y} \rho_{1,2} + \mathcal{O}(|x|^{(2+ \sigma) \delta + 1/2})\notag
\end{eqnarray}
Since \(C_1>C_2\), we see that for sufficiently large $(x,y)\in U_{1}\cap U_{2}$:
 \begin{eqnarray*}
-\alpha_1x^2(\phi_2-\phi_1)\partial_{x}\rho_{1, 2}-\alpha_2xy(\phi_2-\phi_1)\partial_{y} \rho_{1,2}&\leq&-E_2f(q(x,y))|x|^{(2+\sigma)\delta+1}\\
(\phi_2-\phi_1)\partial^2_{y}\rho_{12}+2\partial_y(\phi_2-\phi_1)\partial_y\rho_{12}&\leq&\frac{E_3f(q(x,y))}{h(q(x,y))}|x|^{(2+\sigma)\delta+1}
\end{eqnarray*}
where \(h(t)=t^2(t-1)^2\) and \(E_2,E_3>0\) are independent of both $\boldsymbol{a}$ and $\boldsymbol{\kappa}$.    
Note that the quotient \(f(t)h(t)^{-1}\rightarrow 0\) as \(t\rightarrow 0\) or \(1\).  Therefore there exists \(\epsilon>0\) independent of $\boldsymbol{a}$ and $\boldsymbol{\kappa}$ such that for $|x|^{1/2}|y| \in (1, 1+\epsilon) \cup (2-\epsilon, \epsilon)$ we have
\begin{equation*}\label{subregion12}
\frac{E_3f(q(x,y))}{h(q(x,y))}<E_1/3.\notag
\end{equation*}
This implies that for sufficiently large \((x,y)\in U_{1} \cap U_{2}\) with $|x|^{1/2}|y| \in (1, 1+\epsilon) \cup (2-\epsilon, \epsilon)$, \eqref{bound12} is bounded by
\begin{align*}
-\kappa_2E_1/2|x|^{(2+\sigma)\delta+1}.
\end{align*}
Note that \(1/h(q(x,y))\) is bounded and  $f(q(x,y))$ bounded away from zero on $|x|^{1/2}|y| \in [ 1+ \epsilon, 2-\epsilon]$.  
From this, we see that there exists $\kappa_{2} > 0$ sufficiently small (which only depends on $\boldsymbol{\alpha}$) for the estimate
\[ E_2/2\geq \frac{E_3\kappa_2}{h(q(x,y))}\]
to hold on $|x|^{1/2} |y| \in [1+ \epsilon, 2-\epsilon]$. These estimates imply that for sufficiently large \((x,y)\in U_{1} \cap U_{2}\) with $|x|^{1/2} |y| \in [1+ \epsilon, 2-\epsilon]$, \eqref{bound12} is bounded by  
\begin{align*}
-C^\prime |x|^{(2+\sigma)\delta+1}
 \end{align*}
for some $C^\prime>0$. This finishes the proof of Claim 1.    
\end{proof}
\end{patch}


For the remainder of Stage 2, let \(k:\mathbb{R}\rightarrow\mathbb{R}\) be a smooth function such that \(k(t)=0\) for \(t\leq 0\), \(k(t)=1\) for \(t\geq 1\), and \(k'(t)> 0\) on \((0,1)\).

\begin{patch}
We will now patch \(\phi_2\) and \(\phi_3\) together to obtain a Lyapunov function on $U_{2} \cup U_{3}$.  To this end, let \(\rho_{2,3}(x,y)=k(|y|-1)\).  It is clear that $k$ can be chosen so that  $\rho_{2, 3} \in C^{\infty}(U_{1} \cup U_{2})$.  
\begin{claim}\label{patching_23}
Let $\phi_{2,3}=\rho_{2,3}\phi_3+(1-\rho_{2,3})\phi_2$.  Then for all $\boldsymbol{a}\in \mathbb{R}^{2}$ and all $\kappa_{1}\geq 0$, $\phi_{2,3}$ is a Lyapunov function on $U_{2} \cup U_{3}$.
\end{claim}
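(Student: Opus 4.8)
The plan is to follow the template of the proof of Claim 1. Since $\rho_{2,3}(x,y)=k(|y|-1)$ equals $0$ for $|y|\le 1$ and $1$ for $|y|\ge 2$, on $U_2\cup U_3$ the function $\phi_{2,3}$ coincides with $\phi_2$ wherever $|y|\le 1$ and with $\phi_3$ wherever $|y|\ge 2$, and by Lyapunov Function~\ref{construct_phi2} and Lyapunov Function~\ref{construct_phi3} these are already Lyapunov functions there; hence, in view of \eqref{phipatch}, it suffices to verify Property (III) of Definition~\ref{lyapunovfunction} on the overlap $U_2\cap U_3=\{x<-2\}\cap\{1<|y|<2\}$. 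The key structural observation is that this overlap is unbounded only as $x\to-\infty$, with $|y|$ confined to $(1,2)$; consequently $\phi_2$, $\phi_3$, and therefore $\phi_{2,3}$, are all comparable to $|x|^{2\delta}$ on it, so it is enough to prove a bound $L\phi_{2,3}(x,y)\le -C'|x|^{2\delta+1}+D'$ with $C',D'>0$.

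On $U_2\cap U_3$ I would use formula \eqref{Lpatch} with $i=2$. Because $\rho_{2,3}$ depends on $y$ alone, $\partial_x\rho_{2,3}\equiv 0$, which kills the $\partial_x\rho_{2,3}$ drift term and both $\kappa_1$-terms in \eqref{Lpatch}, making $\kappa_1$-independence automatic. For the diagonal part $\rho_{2,3}L\phi_3+(1-\rho_{2,3})L\phi_2$ I would revisit estimates \eqref{estimate2p} and \eqref{estimate2p1}: on $U_2\cap U_3$, using that $|y|$ is bounded and $|x|\to\infty$, both reduce to $L\phi_i(x,y)\le 2\delta(\alpha_1-\sigma\alpha_2)\,c_i\,|x|^{2\delta+1}|y|^{-2\sigma\delta}(1+o(1))+\mathcal O(|x|^{2\delta})$ for $i=2,3$, with $c_2=C_2$ and $c_3=C_3D^\delta$; since $\sigma>\alpha_1/\alpha_2$ the leading coefficients are strictly negative, so the convex combination is bounded by $-c_0\,|x|^{2\delta+1}|y|^{-2\sigma\delta}+\mathcal O(|x|^{2\delta})$ with $c_0=2\delta(\sigma\alpha_2-\alpha_1)\min(C_2,C_3D^\delta)>0$. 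It then remains to handle the terms of \eqref{Lpatch} carrying a derivative of $\rho_{2,3}$, namely $(a_2y-\alpha_2xy)(\phi_3-\phi_2)\partial_y\rho_{2,3}$, $\kappa_2(\phi_3-\phi_2)\partial_{yy}\rho_{2,3}$ and $2\kappa_2\partial_y(\phi_3-\phi_2)\partial_y\rho_{2,3}$: since $|y|$ is bounded, $\phi_3-\phi_2$ and $\partial_y(\phi_3-\phi_2)$ are $\mathcal O(|x|^{2\delta})$, so the last two terms and the $a_2y$ part of the first are all $\mathcal O(|x|^{2\delta})$, that is, of lower order — and this is the only place $\boldsymbol a$ enters at leading order. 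The single term of the critical size $|x|^{2\delta+1}$ is $-\alpha_2xy(\phi_3-\phi_2)\partial_y\rho_{2,3}$; using $x<0$, $y\,\text{sgn}(y)=|y|$ and $\phi_3-\phi_2\sim(C_3D^\delta-C_2)|x|^{2\delta}|y|^{-2\sigma\delta}$, its leading part equals $\alpha_2 k'(|y|-1)(C_3D^\delta-C_2)|y|^{1-2\sigma\delta}|x|^{2\delta+1}(1+o(1))$, which on $\{1<|y|<2\}$ has absolute value at most $2\alpha_2\|k'\|_\infty|C_3D^\delta-C_2|\,|x|^{2\delta+1}|y|^{-2\sigma\delta}(1+o(1))$.

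Adding this to the diagonal bound and factoring out $|x|^{2\delta+1}|y|^{-2\sigma\delta}$, I would invoke the choice of the constants $C_2,C_3,D$ and of the slope of $k$ recorded in the Appendix so that $2\alpha_2\|k'\|_\infty|C_3D^\delta-C_2|<c_0$ — concretely, $C_3D^\delta$ may be taken sufficiently close to, or equal to, $C_2$ — giving a net coefficient $-C''<0$, whence $L\phi_{2,3}(x,y)\le -C''|x|^{2\delta+1}+\mathcal O(|x|^{2\delta})\le -C\phi_{2,3}(x,y)+D$ on $U_2\cap U_3$; together with Lyapunov Function~\ref{construct_phi2}, Lyapunov Function~\ref{construct_phi3} and \eqref{phipatch} this proves the claim. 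The main difficulty, just as with the $f\,h^{-1}$ estimate in the proof of Claim 1, is that the correction produced by $\partial_y\rho_{2,3}$ has the same polynomial order in $|x|$ as the leading negative term of the diagonal part, so one must make the negativity coming from $\sigma>\alpha_1/\alpha_2$ quantitatively dominant over it; everything else is strictly lower order, and in particular the estimate is uniform in $\boldsymbol a\in\mathbb R^2$ and $\kappa_1\ge 0$ and needs no smallness of $\kappa_2$ beyond that already fixed for Lyapunov Function~\ref{construct_phi2}.
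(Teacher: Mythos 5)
Your setup and computation are on the right track up to the point where you handle the one cross term of critical size, and there you introduce a gap by discarding sign information that the paper relies on. You correctly identify the leading behaviour of $-\alpha_2 xy(\phi_3-\phi_2)\partial_y\rho_{2,3}$ as $\alpha_2 k'(|y|-1)(C_3D^\delta-C_2)\,|y|^{1-2\sigma\delta}\,|x|^{2\delta+1}(1+o(1))$ — note this expression is already manifestly nonpositive, since $C_3D^\delta-C_2<0$ with the Appendix's choices $C_3=\tfrac{1}{2D^\delta}$, $C_2=1$, and all the remaining factors are nonnegative. But instead of exploiting this, you take absolute values and ask that $2\alpha_2\|k'\|_\infty\,|C_3D^\delta-C_2|$ be dominated by the diagonal coefficient $c_0=2\delta(\sigma\alpha_2-\alpha_1)\min(C_2,C_3D^\delta)$. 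That comparison cannot be made to hold for the paper's parameters: $c_0$ is proportional to $\delta=\kappa_2/(8\alpha_1)$ and so is $\mathcal O(\kappa_2)$, whereas the left-hand side is $\mathcal O(\alpha_2)$ and independent of $\kappa_2$; and $\kappa_2$ is ultimately chosen small (this is the one free parameter being shrunk throughout Stage 1--2). Your suggested fix (``take $C_3D^\delta$ close to, or equal to, $C_2$'') is not what the Appendix actually does ($C_3D^\delta=1/2\neq 1=C_2$), and if $C_3D^\delta=C_2$ exactly you also lose the very sign that makes the argument clean.

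The paper's route is simpler and is the intended reason for the constraint $C_3<C_2/D^\delta$: with that choice, $\phi_3-\phi_2<0$ for $|x|$ large on $U_2\cap U_3$; since $x<0$ and $y\,\partial_y\rho_{2,3}=|y|\,k'(|y|-1)\geq 0$, the cross term $-\alpha_2 xy(\phi_3-\phi_2)\partial_y\rho_{2,3}$ is $\leq 0$ and can simply be discarded. Combined with the convex combination of \eqref{estimate2p} and \eqref{estimate2p1} giving $-D_1|x|^{2\delta+1}$ and the remaining $\rho_{2,3}$-derivative and $a_2$-terms being $\mathcal O(|x|^{2\delta})$, the bound $L\phi_{2,3}\leq -D_1|x|^{2\delta+1}+\mathcal O(|x|^{2\delta})$ follows, which is exactly the conclusion you were aiming for. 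So: keep your decomposition via \eqref{Lpatch} and your order-of-magnitude bookkeeping, but replace the absolute-value domination of the cross term by the observation that it is nonpositive.
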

\begin{proof}
Since \(\rho_{2,3}\) is independent of \(x\), 
\begin{eqnarray*}\label{eq_23}
L \phi_{2,3}&=&\rho_{2,3}L\phi_3+(1-\rho_{2,3})L\phi_2+(a_2y-\alpha_2xy)(\phi_3-\phi_2)\partial_{y} \rho_{2,3}\\
&\,&\, +\, \kappa_2(\phi_3-\phi_2)\partial_{yy}\rho_{2,3}+2\kappa_2\partial_y(\phi_3-\phi_2)\partial_{y}\rho_{2,3}.\notag
\end{eqnarray*}
We chose \(C_3<C_2/D^\delta\) so that, for sufficiently large \(|x|\) with $(x,y) \in U_{2}\cap U_{3}$, 
\begin{equation*}
-\alpha_{2}xy(\phi_3-\phi_2) \partial_{y} \rho_{2,3}<0.
\end{equation*}
Note moreover that, by \eqref{estimate2p} and \eqref{estimate2p1}, on $U_{1} \cap U_{2}$, we have
\begin{eqnarray*}
L \phi_{2,3} \leq -D_1|x|^{2\delta+1}- \alpha_{2} xy (\phi_{3}-\phi_{2}) \partial_{y} \rho_{2,3} + \mathcal{O}(|x|^{2\delta}),  
\end{eqnarray*} 
which implies the claim.  
\end{proof} 
\end{patch}


\begin{patch}
We now patch together $\phi_{3}$ and $\phi_{4}$ to obtain a Lyapunov function on $U_{3} \cup U_{4}$.  Define \(\rho_{3,4}:\mathbb{R}^2\rightarrow \mathbb{R}\) by \(\rho_{3,4}(x,y)=k(2x+2)\).  
\begin{claim}\label{patching_34}
Let \(\phi_{3,4}=\rho_{3,4}\phi_4+(1-\rho_{3,4})\phi_3\). Then for all $\boldsymbol{a}\in \mathbb{R}^{2}$ and all $\kappa_{1}\geq 0$, \(\phi_{3,4}\) is a Lyapunov function on $U_{3}\cup U_{4}$.
\end{claim}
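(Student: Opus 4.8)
The plan is to verify property~(III) of Definition~\ref{lyapunovfunction} for $\phi_{3,4}$ on the overlap $U_{3}\cap U_{4}=\{-1<x<-1/2\}\cap\{|y|>1\}$; as explained after \eqref{phipatch}, (I) and (II) are automatic, and on $(U_{3}\cup U_{4})\setminus(U_{3}\cap U_{4})$ the function $\phi_{3,4}$ coincides with $\phi_{3}$ or with $\phi_{4}$, where (III) is already known from Lyapunov Functions~\ref{construct_phi3} and~\ref{construct_phi4}. Since $\rho_{3,4}(x,y)=k(2x+2)$ depends on $x$ only, the terms in \eqref{Lpatch} containing $\partial_{y}\rho_{3,4}$ or $\partial_{yy}\rho_{3,4}$ drop out, so on $U_{3}\cap U_{4}$ one is left with $\rho_{3,4}L\phi_{4}+(1-\rho_{3,4})L\phi_{3}$ together with the three error terms $(a_{1}x-\alpha_{1}x^{2}+y^{2})(\phi_{4}-\phi_{3})\partial_{x}\rho_{3,4}$, $\kappa_{1}(\phi_{4}-\phi_{3})\partial_{xx}\rho_{3,4}$, and $2\kappa_{1}\partial_{x}(\phi_{4}-\phi_{3})\partial_{x}\rho_{3,4}$. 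On this strip $x$ is bounded, so every estimate is governed by $|y|\to\infty$; in particular $\phi_{3}$, $\phi_{4}$, and hence $\phi_{3,4}$, are each comparable to $|y|^{2(1-\sigma)\delta}$, with $2(1-\sigma)\delta<2$.

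The routine pieces come first. From $L\phi_{i}\leq-C_{i}'\phi_{i}+D_{i}$ on $U_{i}\supset U_{3}\cap U_{4}$ together with $\phi_{i}\geq0$ one gets $\rho_{3,4}L\phi_{4}+(1-\rho_{3,4})L\phi_{3}\leq-\min(C_{3}',C_{4}')\,\phi_{3,4}+\max(D_{3},D_{4})$. Both $\partial_{x}\phi_{4}=-C_{4}$ and $\partial_{x}\phi_{3}$ are bounded on the strip, and $\partial_{x}\rho_{3,4}$ is bounded, so $2\kappa_{1}\partial_{x}(\phi_{4}-\phi_{3})\partial_{x}\rho_{3,4}$ is bounded and goes into the additive constant. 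For the term $(a_{1}x-\alpha_{1}x^{2}+y^{2})(\phi_{4}-\phi_{3})\partial_{x}\rho_{3,4}$, the key fact — ensured by the choice of $C_{3}$, $C_{4}$, $N$ in the Appendix, so that $\phi_{4}-\phi_{3}<0$ and $|\phi_{4}-\phi_{3}|=O(|y|^{2(1-\sigma)\delta})$ for $|y|$ large on the strip — is that, since $a_{1}x-\alpha_{1}x^{2}+y^{2}\geq\tfrac12 y^{2}$ for $|y|$ large and $\partial_{x}\rho_{3,4}=2k'(2x+2)\geq0$, this term is bounded above by $-\,G$, where $G:=y^{2}|\phi_{4}-\phi_{3}|\,k'(2x+2)\geq0$. (On the bounded part $|y|\leq M$ of the strip everything is bounded and merely contributes to the constant.)

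The only term that needs genuine care — and the reason this patch is harder than the earlier ones, where the diffusion contribution to the patching error was automatically of lower order — is $\kappa_{1}(\phi_{4}-\phi_{3})\partial_{xx}\rho_{3,4}$: it is only $O(\kappa_{1}\phi_{3,4})$, of the same order in $|y|$ as $\phi_{3,4}$ itself and with an unbounded coefficient $\kappa_{1}$, so it must be dominated, not discarded. I would choose the transition function $k$ so that, in addition to the stated properties, $|k''(t)|\leq A\sqrt{k'(t)}$ for all $t$ and some $A>0$ (this holds for the standard construction, for which $k'$ and $k''$ vanish to infinite order at $t=0$ and $t=1$ and $|k''(t)|/\sqrt{k'(t)}\to0$ as $t\to0^{+}$, $t\to1^{-}$). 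Then $\kappa_{1}(\phi_{4}-\phi_{3})\partial_{xx}\rho_{3,4}\leq4A\kappa_{1}|\phi_{4}-\phi_{3}|\sqrt{k'(2x+2)}$, and Young's inequality bounds this by $\tfrac12 G+8A^{2}\kappa_{1}^{2}|\phi_{4}-\phi_{3}|/y^{2}$, the last term being $O(\kappa_{1}^{2}|y|^{2(1-\sigma)\delta-2})$, hence bounded on the strip because $2(1-\sigma)\delta<2$. Adding this to the bound $-\,G$ from the previous paragraph leaves $-\tfrac12 G\leq0$ plus a bounded remainder, so altogether $L\phi_{3,4}\leq-C\phi_{3,4}+D$ on $U_{3}\cap U_{4}$ for some $C,D>0$ (with $C$ independent of $\kappa_{1}$), which gives (III) and finishes the proof. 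The main obstacle is exactly this balancing act: unlike in the earlier patches, the diffusion-induced patching error is no longer of lower order than $\phi_{3,4}$, so it has to be absorbed by the genuinely-negative, higher-order drift-induced error — and it is this that forces the extra requirement on $k$.
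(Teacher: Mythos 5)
Your proof is correct but follows a genuinely different route from the paper's. The paper starts from the same expansion of $L\phi_{3,4}$ on $U_3\cap U_4$, but it retains the full $-y^2$ decay coming from $\rho_{3,4}L\phi_4$, discards the drift-induced error $(a_1x-\alpha_1x^2+y^2)(\phi_4-\phi_3)\partial_x\rho_{3,4}$ after merely noting it is nonpositive, and then performs a case split in $x$: for $x$ near $-1$, $\partial_{xx}\rho_{3,4}=4k''(2x+2)$ is uniformly small and $1-\rho_{3,4}\approx1$, so the $\phi_3$ contribution alone controls the $\kappa_1$ error; for $x$ bounded away from $-1$, $\rho_{3,4}$ is bounded below and the $-\rho_{3,4}E_5y^2$ term dominates the $O(|y|^{2(1-\sigma)\delta})$ error --- no special property of $k$ is needed beyond smoothness and $k''(0^+)=0$. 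You instead keep the drift-induced error as a resource, bound it above by $-G$ with $G=y^2|\phi_4-\phi_3|k'(2x+2)$, and absorb the problematic $\kappa_1(\phi_4-\phi_3)\partial_{xx}\rho_{3,4}$ term into $G$ via Young's inequality, which forces the extra hypothesis $|k''|\le A\sqrt{k'}$ on the cutoff. Both arguments are sound; the paper's is marginally more elementary, while yours yields constants in~(III) that are independent of $\kappa_1$ (a bonus, though not required by the claim). One small caveat: your remark that this term ``must be dominated, not discarded'' because $\kappa_1$ is unbounded overstates the obstacle --- the constants in~(III) may depend on $\kappa_1$, and the paper's case split handles the term without any absorption.
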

\begin{proof}
Since \(\rho_{3,4}\) is independent of \(y\), 
\begin{eqnarray*}\label{eq_34}
L \phi_{3,4}&=&\rho_{3,4}L\phi_4+(1-\rho_{3,4})L\phi_3+(a_1x-\alpha_1x^2+y^2)(\phi_4-\phi_3)\partial_{x} \rho_{3,4}\\
&\,&\, +\, \kappa_1(\phi_4-\phi_3)\partial_{xx} \rho_{3,4}+2\kappa_1\partial_x(\phi_4-\phi_3)\partial_{x}\rho_{3,4}.\notag
\end{eqnarray*}
By the choice of $N C_{4}<C_{3}$,  \(\phi_{4}-\phi_3\) is negative for sufficiently large \(y\) in the region.  Therefore, there exist $E_5,E_6,E_7>0$ such that
\begin{eqnarray}\label{est34}
L \phi_{3,4} &\leq& -\rho_{3,4}E_5y^2-((1-\rho_{3,4})E_6-E_7|\partial_{xx}\rho_{3,4}|)y^{2\delta(1-\sigma)} +\mathcal{O}(1).
\end{eqnarray}
Since $k,k^{\prime\prime}\rightarrow 0$ as $t\rightarrow 0$  there exists $\epsilon>0$ such that 
\begin{equation*}
(1-\rho_{3,4})E_6-E_7|\partial_{xx}\rho_{3,4}|>E_8
\end{equation*}
for all $(x,y)\in U_{3} \cap U_{4}$ with $x\in (-1, -1+\epsilon)$ and some $E_8>0$.  For $(x,y)\in U_{3} \cap U_{4}$ with $x\in[-1+\epsilon,-1/2)$, $\rho_{3,4}>E_9$ for some $E_9>0$ and $|\partial_{xx}\rho_{3,4}|$ is bounded.  Putting these two estimates together with \eqref{est34} finishes the proof of the claim.  
\end{proof}

\end{patch}


\begin{patch}

Lastly, we patch together \(\phi_4\) and \(\phi_5\) to obtain a Lyapunov function $\phi_{4,5}$ on $U_{4}\cup U_{5}$.  Define then \(\rho_{4,5}:\mathbb{R}^2\rightarrow\mathbb{R}\) by \(\rho_{4,5}(x,y)=k(\frac{2}{N}x-1)\). 

\begin{claim}\label{patching_45}
Let \(\phi_{4,5}=\rho_{4,5}\phi_5+(1-\rho_{4,5})\phi_4\).  Then for all $\boldsymbol{a}\in \mathbb{R}^{2}$ and all $\kappa_{1}\geq 0$, $\phi_{4,5}$ is a Lyapunov function on $U_{4} \cup U_{5}$.
\end{claim}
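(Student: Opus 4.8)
\begin{proof}
As noted preceding Patch~1, I only need to verify property (III) of Definition \ref{lyapunovfunction}, and only on the overlap $U_{4}\cap U_{5}=\{N/2<x<N\}\cap\{|y|>1\}$.  Here is the plan.  On this set $x$ is bounded, so $|z|\to\infty$ forces $|y|\to\infty$; I expect the appendix to fix $\gamma$ so that $\phi_{4}$ and $\phi_{5}$ grow at the same rate there, i.e. $\gamma=(1-\sigma)\delta$, so that both behave like $|y|^{2(1-\sigma)\delta}$, with $\phi_{5}<\phi_{4}$ for $|y|$ large (i.e. $NC_{4}>C_{5}$).  Since $\rho_{4,5}$ depends only on $x$, formula \eqref{Lpatch} reduces on $U_{4}\cap U_{5}$ to
\begin{eqnarray*}
L\phi_{4,5}&=&\rho_{4,5}L\phi_{5}+(1-\rho_{4,5})L\phi_{4}+(a_{1}x-\alpha_{1}x^{2}+y^{2})(\phi_{5}-\phi_{4})\partial_{x}\rho_{4,5}\\
&\,&\,+\,\kappa_{1}(\phi_{5}-\phi_{4})\partial_{xx}\rho_{4,5}+2\kappa_{1}\,\partial_{x}(\phi_{5}-\phi_{4})\,\partial_{x}\rho_{4,5},
\end{eqnarray*}
and I would estimate the five summands in turn.

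First I would invoke Lyapunov Functions \ref{construct_phi4} and \ref{construct_phi5}: $L\phi_{4}\leq-C_{4}y^{2}+\mathcal{O}(|y|^{2(1-\sigma)\delta})$ and $L\phi_{5}\leq-C_{5}'\phi_{5}+D_{5}$.  Since $\partial_{x}\phi_{5}=\mathcal{O}(|y|^{2\gamma-2})$ and $\partial_{x}\phi_{4}$ is constant, the summand $2\kappa_{1}\partial_{x}(\phi_{5}-\phi_{4})\partial_{x}\rho_{4,5}$ is bounded on $U_{4}\cap U_{5}$ and can be absorbed into the additive constant; and since $\phi_{5}-\phi_{4}<0$, $\partial_{x}\rho_{4,5}\geq0$ and $a_{1}x-\alpha_{1}x^{2}+y^{2}>0$ for $|y|$ large, the first--order cross term is nonpositive.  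What is left is the dangerous summand $\kappa_{1}(\phi_{5}-\phi_{4})\partial_{xx}\rho_{4,5}$, of order $\kappa_{1}|y|^{2(1-\sigma)\delta}$; I would dominate it by cutting $U_{4}\cap U_{5}$ in $x$, using that the constants $C,D$ in (III) may depend on $\kappa_{1}$ (only $\kappa_{2}$ must be chosen uniformly, by Proposition \ref{vary_kappa_lemma}).  On $\{N/2<x\leq N-\epsilon_{1}\}$ one has $1-\rho_{4,5}\geq c_{0}>0$, so $(1-\rho_{4,5})L\phi_{4}$ supplies $-c_{0}C_{4}y^{2}$, which beats $\kappa_{1}|y|^{2(1-\sigma)\delta}$ (as $(1-\sigma)\delta<1$) together with all lower--order terms once $|y|$ is large.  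On the strip $\{N-\epsilon_{1}<x<N\}$ the factor $1-\rho_{4,5}$ degenerates, but $\rho_{4,5}$ is now near $1$, so $\rho_{4,5}L\phi_{5}$ supplies $-\rho_{4,5}C_{5}'C_{5}|y|^{2\gamma}$, of the \emph{same} order $|y|^{2(1-\sigma)\delta}$ as the dangerous summand; the coefficient of $|y|^{2(1-\sigma)\delta}$ in $L\phi_{4,5}$ there is then, up to lower order terms,
\begin{equation*}
-\rho_{4,5}C_{5}'C_{5}+(1-\rho_{4,5})C''+\kappa_{1}|NC_{4}-C_{5}|\,|\partial_{xx}\rho_{4,5}|
\end{equation*}
for some $C''>0$, and since $k\to1$ and $k''\to0$ at $1$, shrinking $\epsilon_{1}$ (depending on $\kappa_{1}$) makes this coefficient at most $-\frac{1}{2}C_{5}'C_{5}<0$ on the strip.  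In either region this gives $L\phi_{4,5}\leq-c|y|^{2\gamma}+D'$, and since $\phi_{4,5}\leq\phi_{4}\leq NC_{4}|y|^{2\gamma}$ for $|y|$ large, it follows that $L\phi_{4,5}\leq-C\phi_{4,5}+D$ on $U_{4}\cap U_{5}$, which is (III).

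The hard part is precisely the estimate near $x=N$: the term $-(1-\rho_{4,5})C_{4}y^{2}$ that does the work elsewhere becomes useless once $1-\rho_{4,5}\to0$, and the positive $\kappa_{1}$-cross term then outranks the only surviving negative contribution, $-\rho_{4,5}C_{5}'\phi_{5}$, \emph{unless} $\phi_{5}$ has been arranged to grow at the same rate as $\phi_{4}$ on $U_{4}\cap U_{5}$.  That is the point of the choice $\gamma=(1-\sigma)\delta$ (together with $NC_{4}>C_{5}$), and it is what lets the coefficient comparison in the last step succeed; the remaining bookkeeping is routine and completes the construction of $\Phi$.
\end{proof}
\end{patch}
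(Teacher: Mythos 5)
Your proposal is correct and is essentially a detailed write-out of what the paper abbreviates: the paper simply states that Claim 4 is ``nearly identical to the proof of Claim 3'' (with $NC_4>C_5$ replacing $NC_4<C_3$), and your argument is exactly the mirror image of that proof — decompose via \eqref{Lpatch} using that $\rho_{4,5}$ depends only on $x$, note the sign of $\phi_5-\phi_4$ makes the first-order cross term harmless, identify the dangerous $\kappa_1$-second-order cross term of order $|y|^{2\gamma}$, and split $U_4\cap U_5$ into a strip near $x=N$ (where $k''\to 0$ kills the cross term while $\rho_{4,5}L\phi_5$ supplies the competing $-|y|^{2\gamma}$ contribution) and its complement (where $(1-\rho_{4,5})L\phi_4$ supplies the stronger $-y^2$ term). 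Your observation that $\epsilon_1$, and hence $C,D$ in property (III), may depend on $\kappa_1$ is consistent with the paper's framework, since only $\kappa_2$ must be chosen uniformly.
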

\begin{proof}
With the choice of $NC_{4}>C_{5}$ the proof is nearly identical to the proof of Claim 3, so we omit further discussion.    
\end{proof}
\end{patch}





\section{Regularity and Controllability}\label{positivity}

We now prove Lemma \ref{lem2}.  We first start by showing that the transition measures $P(t, z, \, \cdot\,)$ have smooth densities with respect to Lebesgue measure on $\mathbb{R}^{2}$.  This is a simple consequence of H\"{o}rmander's hypoellipticity theorem \cite{HOR}, a powerful result that says, roughly speaking, that if enough noise spreads throughout the system such regularity follows.  As we shall see, the word ``enough" can be quantified by considering the span of the Lie algebra generated by vector fields in the expression for the generator $L$.  In general, this requires rewriting the SDE in the Stratonovich form \cite{NOR}.  In the present case, It\^o and Stratonovich forms coincide, since the noise coefficients are constant.

\subsection{Regularity}

The regularity of the measures $P(t, z, \, \cdot \, )$ can be studied using either classical PDE theory \cite{HOR} or the Malliavin calculus \cite{NOR, ND}.  To state the result, for vector fields $X$ and $Y$, let $[X,Y]$ be their Lie bracket (commutator).  If we write the generator $L$ in the form:
\begin{eqnarray*}
L= X_{0} + X_{1}^{2}+ X_{2}^{2},
\end{eqnarray*}     
where $X_{0},\, X_{1},$ and $X_{2}$ are vector fields (treated as first-order differential operators, so that $X_j^2$ denotes a composition of such operator with itself) with smooth coefficients and if at each point $z\in \mathbb{R}^{2}$ the list:
\begin{align*}
&X_{j_{1}}(z) \, && j_{1}=1,2\\
&\left[X_{j_{1}}, X_{j_{2}} \right](z)\, &&j_{1}, j_{2}=0,1,2\\
& \left[ X_{j_{1}}, \left[X_{j_{2}}, X_{j_{3}}\right] \right](z) \, &&j_{1}, j_{2}, j_{3} =0,1,2\\
&\vdots \, && \vdots
\end{align*}
spans $\mathbb{R}^{2}$, then for $t>0$, $z\in \mathbb{R}^{2}$
\begin{eqnarray*}
P(t,z, dw)= p(t, z, w) \, dw,
\end{eqnarray*}
where $p(t, z, w)$ is a smooth function for $(t, z, w)\in (0, \infty) \times \mathbb{R}^{2} \times \mathbb{R}^{2}$.  Moreover, if $\nu$ is an invariant probability measure, then
\begin{equation*}
\nu( dw)= \rho_{\nu}(w) \, dw
\end{equation*}
where $\rho_{\nu}\in C^{\infty}(\mathbb{R}^{2}).$

\begin{proof}[Proof of Lemma \ref{lem2} \textbf{(1)}]
Note that 
\begin{eqnarray*}
L&=&\left((a_{1}x-\alpha_{1}x^{2}+y^{2})\frac{\partial}{\partial x}+ (a_{2}y -\alpha_{2} xy) \frac{\partial}{\partial y} \right) \\
&\,& \,+\, \left(\sqrt{\kappa_{1}} \frac{\partial}{\partial x} \right)^{2} + \left(\sqrt{\kappa_{2}} \frac{\partial}{\partial y} \right)^{2}.
\end{eqnarray*}
Thus let 
\begin{eqnarray*}
X_{0}&=&\left((a_{1}x-\alpha_{1}x^{2}+y^{2})\frac{\partial}{\partial x}+ (a_{2}y -\alpha_{2} xy) \frac{\partial}{\partial y} \right) \\
X_{1}&=&\sqrt{\kappa_{1}} \frac{\partial}{\partial x} \\
X_{2}&=&\sqrt{\kappa_{2}} \frac{\partial}{\partial y}. 
\end{eqnarray*}
If both $\kappa_{1}>0,\, \kappa_{2}>0$, then $X_{1}(z)$ and $X_{2}(z)$ span $\mathbb{R}^{2}$ for all $z\in \mathbb{R}^{2}$.  If $\kappa_{1}=0$, we see that
\begin{eqnarray*}
X_{2,2,0}:=\left[X_{2}, \left[ X_{2}, X_{0}\right] \right] =2 \kappa_{2} \frac{\partial}{\partial x}.
\end{eqnarray*}
Since $\kappa_{2}>0$, $X_{2,2,0}(z)$ and $X_{2}(z)$ span $\mathbb{R}^{2}$ for all $z\in \mathbb{R}^{2}$.  
\end{proof}


\subsection{Controllability}                                                              

We now prove Lemma \ref{lem2} \textbf{(2)} by utilizing a connection between SDEs and control theory provided by the Stroock-Varadhan support theorem \cite{SV}.  We will also use and modify results in \cite{AK, JK, JK1} to work in our setting.  

With fixed $a_{1}, a_{2} \in \mathbb{R}$, $\alpha_{2}>\alpha_{1}>0$, and $\kappa_{1}\geq 0, \, \kappa_{2}>0$, consider the ordinary differential equation:
\begin{eqnarray}\label{contsys}
\dot{x}_{t}&=& (a_{1}x_{t}-\alpha_{1} x_{t}^{2}+y_{t}^{2}) + \kappa_{1} u(t)\\
\dot{y}_{t}&=&(a_{2}y_{t}-\alpha_{2}x_{t}y_{t} ) + \kappa_{2} v(t), \nonumber  
\end{eqnarray}    
where $u,\, v: [0, \infty) \rightarrow \mathbb{R}$ are called \emph{controls}.  We assume $u$ and $v$ belong to $\mathcal{U}$, the class of piecewise constant mappings from $[0, \infty)$ into $\mathbb{R}$ with at most finitely many discontinuities.  For $w(t):=(u(t), v(t))$ with $u,\,v\in \mathcal{U}$, we let $z(z_{0}, w, t)$ be the maximal right integral curve of \eqref{contsys} passing through $z_{0}\in \mathbb{R}^{2}$ at $t=0$.

In view of \cite{AK}, one can use \emph{accessibility sets} of the system \eqref{contsys} to help determine the support of an extremal invariant probability measure.  More precisely, for $t\geq 0$, we let $A(z_{0}, \leq t)$ denote the \emph{accessibility set from $z_{0}$ in }$t$ \emph{units of time or less}, i.e., 
\begin{eqnarray*}
A(z_{0}, \leq t):=\{ z(z_{0}, w, s)\, :\, \, w=(u, v), \, u,v\in \mathcal{U}, \, 0\leq s \leq t\}.
\end{eqnarray*} 
Defining
\begin{eqnarray*}
A(z_{0}) := \bigcup_{t>0} A(z_{0}, \leq t),
\end{eqnarray*}    
we note the following result in \cite{AK}:
\begin{theorem}
If $\mu$ is an extremal invariant probability measure for the process $Z_{t}$, there exists $z_{0} \in \mathbb{R}^{2}$ such that 
\begin{eqnarray*}
\text{supp } \mu = \overline{A(z_{0})}.
\end{eqnarray*}
\end{theorem}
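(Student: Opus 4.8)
The plan is to translate the claim into a statement about the control system \eqref{contsys} using the Stroock--Varadhan support theorem, and then prove two opposite inclusions — invariance of $\mu$ gives one, ergodicity (extremality) of $\mu$ gives the other. Since the noise coefficients in \eqref{initial_system} are constant, the It\^o and Stratonovich forms coincide and the support theorem applies directly: for $t>0$ and $z\in\mathbb{R}^{2}$, $\text{supp}\,P(t,z,\cdot\,)=\overline{A(z,=t)}$, the closure of the set of endpoints of solutions of \eqref{contsys} issued from $z$ that run for exactly time $t$. Recall $A(z)=\bigcup_{t>0}A(z,=t)$.

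First I would show $F:=\text{supp}\,\mu$ is forward invariant for \eqref{contsys}, i.e. $\overline{A(z)}\subseteq F$ for every $z\in F$. Invariance of $\mu$ gives $0=\mu(F^{c})=\int_{\mathbb{R}^{2}}P(t,w,F^{c})\,\mu(dw)$, so $P(t,w,F^{c})=0$ for $\mu$-a.e. $w$. By Lemma \ref{lem2}\textbf{(1)} the kernel has a smooth density in $w$, so (by Scheff\'e's lemma) $w\mapsto P(t,w,F^{c})$ is continuous; its zero set is therefore closed and of full $\mu$-measure, hence contains $\text{supp}\,\mu=F$. Thus $\overline{A(w,=t)}=\text{supp}\,P(t,w,\cdot\,)\subseteq F$ for all $w\in F$ and $t>0$, and taking the union over $t$ (with $F$ closed) gives $\overline{A(w)}\subseteq F$.

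For the reverse inclusion I would invoke Birkhoff's ergodic theorem for the Markov process $Z_{t}$: an extremal invariant probability measure is ergodic, so for $\mu$-a.e. $z_{0}$ and $P_{z_{0}}$-a.e. path the fraction of time the process spends in any open set $U$ tends to $\mu(U)$. In particular, whenever $\mu(U)>0$ there is a time $s$ with $P(s,z_{0},U)>0$, so $U\cap\overline{A(z_{0},=s)}\neq\emptyset$ and hence $U\cap\overline{A(z_{0})}\neq\emptyset$. Since the open sets of positive $\mu$-measure are exactly the open sets meeting $F$, the closed set $\overline{A(z_{0})}$ meets every open set that meets $F$, forcing $F\subseteq\overline{A(z_{0})}$. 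Picking $z_{0}$ in the full-measure (hence non-empty) set where both $z_{0}\in F$ and this property hold, the two inclusions give $\text{supp}\,\mu=F=\overline{A(z_{0})}$.

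The two soft-analysis ingredients — strong Feller continuity of $P_{t}$ (immediate from the smooth densities of Lemma \ref{lem2}\textbf{(1)}) and the ergodic theorem — are unproblematic here, since in the regime $\alpha_{2}>\alpha_{1}$ Lemma \ref{lem1} makes $Z_{t}$ a genuine conservative Feller process on $\mathbb{R}^{2}$. The delicate point, and the crux of the argument, is the second inclusion: a priori the accessibility set of each individual point could be a proper subset of $F$, and it is ergodicity of $\mu$ that supplies a single $z_{0}$ from which the entire support is asymptotically reached. In the language of \cite{AK, JK, JK1}, the first step shows $F$ is closed and forward invariant for the control flow, and the second identifies $F$ as the unique invariant control set contained in it, namely $\overline{A(z_{0})}$.
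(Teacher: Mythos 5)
Your proof is correct and self-contained, but it takes a genuinely different route from the paper's: the paper dispatches the theorem in a single sentence by citing Proposition 1.1 and Definition 1.1 of Arnold--Kliemann \cite{AK}, whereas you reconstruct that argument from scratch. The two directions you supply are exactly the two inputs underlying the Arnold--Kliemann characterization of supports of extremal invariant measures: (i) invariance of $\mu$ plus the strong Feller property (here delivered by the smooth density of Lemma \ref{lem2}\textbf{(1)}, with Scheff\'e giving continuity of $w\mapsto P(t,w,F^{c})$) forces $\overline{A(w)}\subseteq\text{supp}\,\mu$ for every $w\in\text{supp}\,\mu$; and (ii) ergodicity of the extremal $\mu$ via Birkhoff forces, for a.e.\ $z_{0}$, that $\overline{A(z_{0})}$ meets every open set of positive $\mu$-measure, hence contains $\text{supp}\,\mu$. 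What your version buys is transparency --- the reader sees precisely where invariance, the support theorem, and extremality each enter --- at the cost of length; the paper's citation buys brevity at the cost of sending the reader to \cite{AK}.

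One small point you should make explicit in step (ii): Birkhoff's theorem gives, for each fixed $U$, a $\mu$-full set of good $z_{0}$, and the exceptional set a priori depends on $U$. To extract a single $z_{0}$ that works for \emph{all} open $U$ with $\mu(U)>0$, run the argument over a countable base $\{U_{n}\}$ of $\mathbb{R}^{2}$, intersect the countably many full-measure sets, and then observe that any open $U$ with $\mu(U)>0$ contains some $U_{n}$ with $\mu(U_{n})>0$. This is routine and does not affect the conclusion, but as written the quantifiers are slightly out of order. With that patch the argument is complete and matches the content, if not the presentation, of the cited result.
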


\begin{proof}
This follows by Proposition 1.1 and Definition 1.1 in \cite{AK} since $Z_t$ has continuous density with respect to Lebesgue measure.     
\end{proof}
Using Proposition 2.1 of \cite{AK} and Lemma \ref{lem2} \textbf{(1)}, to prove uniqueness of the invariant probability measure $\nu$ it is enough to show:
\begin{lemma}\label{quadcont}
For all $z_{0}, w_{0}\in \mathbb{R}^{2}$, there exists a non-empty open subset $U=U(z_{0}, w_{0})\subset \mathbb{R}^{2}$ such that:
\begin{equation}
\overline{A(z_{0})}\cap \overline{A(w_{0}})\supset U(z_{0}, w_{0}).
\end{equation}
\end{lemma}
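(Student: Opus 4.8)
The plan is to exhibit a single nonempty open set $U^\ast\subseteq\mathbb{R}^2$ with $U^\ast\subseteq\overline{A(z_0)}$ for \emph{every} $z_0\in\mathbb{R}^2$; then $U:=U^\ast$ serves for any pair $(z_0,w_0)$. If $\kappa_1>0$ this is immediate: the controls $u,v$ act in both coordinates of \eqref{contsys}, so the velocity $(\dot x_t,\dot y_t)$ can be prescribed arbitrarily and, by density of piecewise-constant controls in $\mathcal{U}$, $\overline{A(z_0)}=\mathbb{R}^2$; take $U^\ast=\mathbb{R}^2$. So assume $\kappa_1=0$, which is the essential case. The structural point is that, since $\kappa_2>0$, along \eqref{contsys} the coordinate $y_t$ can be driven along essentially any prescribed path (choosing $v$ accordingly, and approximating continuous controls by elements of $\mathcal{U}$ in the standard way); consequently $\xi(t):=y_t^2$ behaves like a free $[0,\infty)$-valued control in the scalar equation $\dot x_t=a_1x_t-\alpha_1 x_t^2+\xi(t)$.

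I would then prove: there is $M>0$ such that $\{(x,y):x>M\}\subseteq\overline{A(z_0)}$ for all $z_0\in\mathbb{R}^2$, and set $U^\ast=\{x>M\}$. This rests on three moves. (i) \emph{Reaching the far region.} For $|y|$ large, $a_1x-\alpha_1 x^2+y^2>0$ even when $x$ is very negative, so $x_t$ can be driven up to any prescribed large value while $v$ holds $y_t$ at a convenient large value; hence $A(z_0)$ meets $\{x>R\}$ for every $R$, uniformly in $z_0$ (more negative $x_0$ simply requires larger $|y|$). (ii) \emph{Descending in $x$.} With $y_t$ taken small, $\dot x=a_1x-\alpha_1 x^2$ is logistic-type and drives $x$ down to a neighbourhood of its finite stable equilibrium $\max(0,a_1/\alpha_1)$, which lies below $M$ once $M$ is large. (iii) \emph{Hitting an arbitrary target} $(X,Y)$ with $X>M$. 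Put $c(X)=\sqrt{\alpha_1X^2-a_1X}$ (positive for $X$ large), the value of $|y|$ on the $x$-nullcline. If $|Y|\le c(X)$, approach $x=X$ from above: keeping $|y|<c(X)\le c(x)$ for $x\ge X$ forces $\dot x<0$, and one steers $(x_t,y_t)\to(X,Y)$. If $|Y|\ge c(X)$, approach from below: after move (ii) place $x$ slightly below $X$ with $y$ small, then ramp $|y|$ above $c(x)$, forcing $\dot x>0$, and steer $y_t\to Y$ as $x_t\uparrow X$ (timing the ramp so both coordinates arrive together). Since the two regimes overlap near $|Y|\approx c(X)$, every $(X,Y)$ with $X>M$ is a limit of points of $A(z_0)$.

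Assembling these, $\{x>M\}\subseteq\overline{A(z_0)}$ for all $z_0$, so for any $z_0,w_0\in\mathbb{R}^2$ the set $U:=\{x>M\}$ (indeed any nonempty open subset of it) satisfies $\overline{A(z_0)}\cap\overline{A(w_0)}\supseteq U$. This is Lemma~\ref{quadcont}; combined with Lemma~\ref{lem2}\textbf{(1)} and Proposition~2.1 of \cite{AK}, it yields uniqueness of the invariant probability measure $\nu$.

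The main obstacle is move (iii): one must actually build admissible (piecewise-constant) controls steering the \emph{coupled} system to a neighbourhood of an arbitrary $(X,Y)$ with $X>M$, respecting the sign constraint that ties $\mathrm{sgn}(\dot x_t)$ to whether $|y_t|$ sits below or above the nullcline value $c(x_t)$, while keeping every estimate uniform over $z_0$. The two-sided strategy — reaching large-$|Y|$ targets from below and small-$|Y|$ targets from above — is precisely what circumvents that constraint; the rest is bookkeeping with the logistic phase portrait of $\dot x=a_1x-\alpha_1x^2$, Gronwall-type continuity of solutions in the controls, and the usual density/robustness arguments (support-theorem style) that let one replace continuous controls by elements of $\mathcal{U}$ without changing $\overline{A(z_0)}$.
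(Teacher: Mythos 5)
Your proposal is correct in outline but takes a genuinely different route from the paper, and it is worth noting what each buys. The paper invokes the Jurdjevic--Kupka machinery: it enlarges the polysystem $F=\{Z+uW_1+vW_2\}$ to its saturate via convexification (Lemma \ref{conelem}) and normalizers (Lemma \ref{normlem}), computes $[W_2,[W_2,Z]]=2\kappa_2^2\,\partial_x$, and deduces directly that $\overline{A_F(z_0,\le t)}\supset H(z_0):=\{(u,v):u\ge x_0\}$ for every $t>0$; the intersection $H(z_0)\cap H(w_0)$ is a closed right half-plane with nonempty interior, which is all the lemma needs. In other words, the paper never needs to move \emph{left} in $x$, and the sole structural fact it extracts — that the $y$-noise, pushed through the quadratic coupling $y^2\partial_x$, gives access to the positive $x$-direction — is exactly the Lie-bracket computation. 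You arrive at the same structural fact by the direct observation that $y_t^2$ is an effectively free $[0,\infty)$-valued input to the $x$-equation, and you then build explicit trajectories. Your target $U^\ast=\{x>M\}$ is in fact a stronger claim than $H(z_0)\subseteq\overline{A(z_0)}$ (it requires showing you can also descend in $x$, which the bracket argument by itself does not give, since $|\mu|\,[W_2,[W_2,Z]]$ only points rightward), but the extra strength is unnecessary: the half-plane suffices.

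The trade-off is the usual one. The paper's route is modular and short once one accepts the saturate/normalizer lemmata; the time-bounded inclusion $\overline{A_F(z_0,\le t)}\supset H(z_0)$ comes for free. Your route is elementary and self-contained, but the steps you flag as ``bookkeeping'' are the real content and would need to be carried out carefully: in move (i) there is a genuine race between the super-linear decay $\dot x\approx-\alpha_1 x^2$ for $x\ll 0$ (finite-time blow-up to $-\infty$) and the controlled growth of $|y_t|$, so one must check, with a fixed constant control $v=N$, that $N$ can be chosen (depending on $|x_0|$) so that $y_t^2$ overtakes $\alpha_1x_t^2-a_1x_t$ before the blow-up time; in move (iii) you must cross the nullcline $|y|=c(x)$, at which instant $\dot x=0$ and the $dy/dx$ reparameterization you are implicitly using degenerates, so the argument has to pass transversally through the nullcline rather than steer along it; and the final step of approximating continuous controls by elements of $\mathcal{U}$ while preserving the closure of the accessibility set is standard but must be stated. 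None of these are fatal — the phase portrait genuinely supports your strategy — but the paper's geometric-control framework is designed precisely to avoid having to perform these explicit constructions, which is why it is the route the authors chose.
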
    

To prove the lemma above, we require ideas in \cite{JK, JK1}.  This is because it is difficult to find specific controls $u, \,v$ and then solve equation \eqref{contsys} to determine $\overline{A(z_{0})}$.  As emphasized in these works, one can simplify such a procedure for systems like \eqref{contsys} by using geometric control theory.  

To illustrate this approach, let $Z$ and $W$ be the vector fields on $\mathbb{R}^{2}$ determined by 
\begin{eqnarray*}
Z(x,y)&=& (a_{1}x-\alpha_{1} x^{2}+y^{2}, a_{2}y-\alpha_{2}xy)\\
W_{1}(x,y)&=& (\kappa_{1} ,0)\\
W_{2}(x,y)&=& (0, \kappa_{2}), 
\end{eqnarray*}               
and define a family of smooth vector fields:
\begin{eqnarray*}
F = \left\{ Z + u W_{1}+ vW_{2}\, : \, u, \, v\in \mathbb{R}\right\}.  
\end{eqnarray*}
We call $F$ a \emph{polysystem}.  If $\tilde{Z}\in F$, for $t\geq 0$ and $z_0\in \mathbb{R}^2$ let $\exp(t \tilde{Z})(z_0)$ denote its maximal integral curve passing through $z_0$ at $t=0$.  For $z_{0}\in \mathbb{R}^{2}$ and $t\geq 0$ we let $A_{F}(z_{0}, \leq t)$ denote the set of $w\in \mathbb{R}^{2}$ such that there exist $Z_{1}, Z_{2}, \ldots, Z_{k}\in F$ and times $t_{1}, t_{2}, \ldots, t_{k}\geq 0$ such that $t_{1}+t_{2}+ \cdots + t_{k}\leq t$ and 
\begin{equation*}
\exp(t_{k}Z_{k}) \circ \exp(t_{k-1} Z_{k-1}) \circ \cdots \circ \exp(t_{1} Z_{1}) (z_{0}) = w.  
\end{equation*}
It is easy to see that $A_{F}(z_{0}, \leq t) = A(z_{0}, \leq t)$ for all $z_{0}\in \mathbb{R}^{2}$, $t\geq  0$.

The benefit of using geometric ideas is that it provides a means by which to enlarge $F$ without changing $\overline{A_{F}(z_{0}, \leq t)}$.  In light of this, we say two polysystems $F_{1}$ and $F_{2}$ are \emph{equivalent}, denoted by $F_{1} \sim F_{2}$, if for all $z_{0}\in \mathbb{R}^{2}$ and $t>0$
\begin{equation*}
\overline{A_{F_{1}}(z_{0},\leq  t)}= \overline{A_{F_{2}}(z_{0}, \leq t)}.
\end{equation*}   
One can show, see \cite{JK, JK1}, that if $F_{1} \sim F$ and $F_{2} \sim F$, then $F_{1}\cup F_{2} \sim F$.  Thus to make $F$ as large as possible, we consider the union of all polysystems equivalent to $F$, called the \emph{saturate} of $F$, which we denote by $\text{Sat}(F)$.  To yield new equivalent polysystems from old, we require the following definitions and lemmata \cite{JK, JK1}.

\begin{definition}
We call a diffeomorphism $\eta: \mathbb{R}^{2} \rightarrow \mathbb{R}^{2}$ a \emph{normalizer} of the polysystem $F$ if for all $z_{0}\in \mathbb{R}^{2}$ and all $t>0$:
\begin{equation*}
\eta( \overline{A_{F}( \eta^{-1}(z_{0}), \leq t)}) \subset \overline{A_{F}(z_{0}, \leq t)}.  
\end{equation*}
We denote the set of all normalizers of $F$ by $\text{Norm}(F)$.  
\end{definition}
\begin{lemma}\label{normlem}
If $F$ is polysystem, then 
\begin{equation*}
\bigcup_{\eta \in \text{Norm}(F)} \left\{\eta_{*}(V) \, : \, V\in F \right\}\sim F,
\end{equation*}
where $\eta_{*}$ denotes the differential.  
\end{lemma}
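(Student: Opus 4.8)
The plan is to reduce the equivalence to a statement about a single normalizer and then assemble an arbitrary trajectory by induction on the number of flow segments it uses. Write $F_{\eta}:=\{\eta_{*}(V):V\in F\}$ for $\eta\in\text{Norm}(F)$, and $F':=\bigcup_{\eta\in\text{Norm}(F)}F_{\eta}$; the goal is $\overline{A_{F'}(z_{0},\leq t)}=\overline{A_{F}(z_{0},\leq t)}$ for all $z_{0}\in\mathbb{R}^{2}$ and $t>0$. One inclusion is immediate: the identity map is a normalizer of $F$, so $F=F_{\text{id}}\subseteq F'$ and hence $A_{F}(z_{0},\leq t)\subseteq A_{F'}(z_{0},\leq t)$ for all $z_{0},t$.

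The basic computational input is the conjugation identity $\exp(s\,\eta_{*}V)=\eta\circ\exp(sV)\circ\eta^{-1}$, valid on the maximal domains of the integral curves in question. Iterating it along a finite composition of flows of vector fields of the form $\eta_{*}V$ with $V\in F$ gives the exact equality
\[
A_{F_{\eta}}(z_{0},\leq t)=\eta\big(A_{F}(\eta^{-1}(z_{0}),\leq t)\big),
\]
and, because $\eta$ is a homeomorphism, the same identity holds after taking closures. Combining this with the defining inclusion for a normalizer yields $\overline{A_{F_{\eta}}(z_{0},\leq t)}\subseteq\overline{A_{F}(z_{0},\leq t)}$; in particular, whenever $\exp(s\,\eta_{*}V)(q)$ is defined it lies in $\overline{A_{F}(q,\leq s)}$, for every $V\in F$ and $\eta\in\text{Norm}(F)$.

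It remains to show $\overline{A_{F'}(z_{0},\leq t)}\subseteq\overline{A_{F}(z_{0},\leq t)}$. A generic point of $A_{F'}(z_{0},\leq t)$ has the form $\exp(t_{k}Z_{k})\circ\cdots\circ\exp(t_{1}Z_{1})(z_{0})$ with $Z_{i}\in F_{\eta_{i}}$ for some normalizers $\eta_{i}$ and $t_{1}+\cdots+t_{k}\leq t$, and I would prove by induction on $k$ that any such point lies in $\overline{A_{F}(z_{0},\leq t_{1}+\cdots+t_{k})}$. The inductive step rests on two elementary facts: (i) if $q\in A_{F}(z_{0},\leq s)$ then $A_{F}(q,\leq r)\subseteq A_{F}(z_{0},\leq s+r)$, hence $\overline{A_{F}(q,\leq r)}\subseteq\overline{A_{F}(z_{0},\leq s+r)}$; and (ii) a composition of flows is continuous in its initial point on the open set where it is defined, so a point $p$ lying only in $\overline{A_{F}(z_{0},\leq s)}$ may be approximated by genuinely reachable points $p'$, after which $\exp(t_{k}Z_{k})(p')\in\overline{A_{F}(p',\leq t_{k})}$ by the previous paragraph, and continuity together with (i) places $\exp(t_{k}Z_{k})(p)$ in $\overline{A_{F}(z_{0},\leq t_{1}+\cdots+t_{k})}$. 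Taking the union over all such endpoints gives $A_{F'}(z_{0},\leq t)\subseteq\overline{A_{F}(z_{0},\leq t)}$, and a final closure completes the argument; the same reasoning incidentally shows that any finite sub-union $F_{\eta_{1}}\cup\cdots\cup F_{\eta_{m}}$ is equivalent to $F$, which is all one ever needs since a trajectory involves only finitely many vector fields.

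I do not anticipate a conceptual obstacle; the one point requiring genuine care is the incompleteness of the vector fields in $F$ (whose integral curves may escape to infinity in finite time), so that the conjugation identity and the concatenation estimates must be stated on the appropriate maximal domains, precisely as in the framework of \cite{JK,JK1}. With that bookkeeping in place, every step above is routine.
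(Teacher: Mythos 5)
The paper does not prove this lemma; it is taken over from Jurdjevic--Kupka (\cite{JK,JK1}), so there is no in-text argument to compare against. Your proof is correct, and it is the standard one: the conjugation identity $\exp(s\,\eta_{*}V)=\eta\circ\exp(sV)\circ\eta^{-1}$ yields $A_{F_{\eta}}(z_0,\leq t)=\eta\bigl(A_F(\eta^{-1}(z_0),\leq t)\bigr)$, the homeomorphism $\eta$ commutes with taking closure, and the normalizer property then gives $\overline{A_{F_{\eta}}(z_0,\leq t)}\subseteq\overline{A_F(z_0,\leq t)}$; the induction over flow segments with the flow-continuity and concatenation steps is exactly what is needed to pass from a single $F_\eta$ to the full union. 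The one place where care is essential---and you handle it correctly---is that a boundary point $p\in\overline{A_F(z_0,\leq s)}$ must be approximated by \emph{genuinely reachable} $p'\in A_F(z_0,\leq s)$ before the concatenation inclusion $A_F(p',\leq r)\subseteq A_F(z_0,\leq s+r)$ can be invoked, and these approximants must land in the open domain of $\exp(t_kZ_k)$; both requirements hold for $p'$ sufficiently close to $p$. Your closing remark that only finite sub-unions $F_{\eta_1}\cup\cdots\cup F_{\eta_m}$ are ever actually used is accurate, since each point of $A_{F'}(z_0,\leq t)$ is witnessed by finitely many vector fields, and it sidesteps any worry about the size of $\text{Norm}(F)$.
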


\begin{lemma}\label{conelem}
If $F$ is a smooth polysystem, then $F$ is equivalent to the closed convex hull of $\{\lambda V\, : \, 0\leq \lambda \leq 1,\, V\in F\}$.  Here the closure is taken in the topology of uniform convergence with all derivatives on compact subsets of $\mathbb{R}^{2}$. 
\end{lemma}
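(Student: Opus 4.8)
The plan is to verify the two inclusions implied by the equivalence $F\sim \bar{\mathrm{co}}\,\{\lambda V:0\le\lambda\le 1,\ V\in F\}$. Write $\mathcal{C}$ for this closed convex hull. The inclusion $\overline{A_F(z_0,\le t)}\subset \overline{A_{\mathcal{C}}(z_0,\le t)}$ is immediate since $F\subset\mathcal{C}$ and the accessibility set is monotone in the polysystem. The content is the reverse inclusion $\overline{A_{\mathcal{C}}(z_0,\le t)}\subset\overline{A_F(z_0,\le t)}$, which I would establish in three incremental steps, each preserving the closure of the time-bounded accessibility sets.

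First, I would show that $F$ is equivalent to $\{\lambda V:0\le\lambda\le 1,\ V\in F\}$, i.e. scaling vector fields down does not enlarge accessibility. The point is that an integral curve of $\lambda V$ run for time $t$ is, after time reparametrization, exactly the integral curve of $V$ run for time $\lambda t\le t$; so any point reachable using the scaled fields in total time $\le t$ is reachable using $F$ itself in total time $\le t$. Hence $A_{\{\lambda V\}}(z_0,\le t)=A_F(z_0,\le t)$ on the nose (the case $\lambda=0$ contributes only the trivial flow), so in particular the closures agree. Second, I would show that finite convex combinations $\sum_i \mu_i V_i$, $\mu_i\ge 0$, $\sum_i\mu_i\le 1$, $V_i\in F$, can be added to the polysystem without changing $\overline{A_F(z_0,\le t)}$. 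The mechanism is the standard concatenation/Lie-splitting approximation: the time-$s$ flow of $\sum_i\mu_i V_i$ is approximated, uniformly on compacts and for $s$ in a bounded interval, by iterating short flows $\exp((\mu_i s/n)V_i)$ in cyclic order and letting $n\to\infty$; the total time used, $s\sum_i\mu_i\le s$, stays $\le t$, so the limiting point lies in $\overline{A_F(z_0,\le t)}$. Combining with step one (to absorb the $\sum\mu_i\le 1$ constraint cleanly) gives that the convex hull of $\{\lambda V:0\le\lambda\le1,\ V\in F\}$ is equivalent to $F$. Third, I would pass to the closure: if $W$ is a limit, uniformly with all derivatives on compact sets, of vector fields $W_k$ already in an equivalent polysystem, then for fixed $z_0$ and fixed time $s\le t$ the flows $\exp(sW_k)(z_0)$ converge to $\exp(sW)(z_0)$ by continuous dependence of ODE solutions on the vector field (here one uses that on the relevant compact set the convergence $W_k\to W$ is $C^1$, which is supplied by the chosen topology). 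Hence every point reachable by $W$ in time $\le t$ is a limit of points in $\overline{A_F(z_0,\le t)}$, so lies in that closed set; concatenating finitely many such flows handles general reachable points. Invoking the remark before the lemma that a union of polysystems each equivalent to $F$ is again equivalent to $F$, we conclude $\mathcal{C}\sim F$.

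The main obstacle is step two, the convex-combination/Lie-splitting argument: one must control the approximation of $\exp(s\sum_i\mu_i V_i)$ by products of the individual short-time flows \emph{uniformly in the relevant parameters} and, crucially, keep careful track of the total elapsed time so that the $\le t$ budget is never exceeded. This is where the time-reparametrization of step one is doing real work — it lets one trade "flowing $V$ for a short time" against "flowing $\lambda V$", which is exactly what is needed to make the bookkeeping of times close. Everything else (monotonicity of accessibility in the polysystem, continuous dependence of ODE flows on the field, the union-of-equivalents remark) is either elementary or already granted in the text, so I would present those steps briskly and concentrate the writing on the splitting estimate.
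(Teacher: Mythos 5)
Your proposal is correct, but note that the paper does not actually prove this lemma: it is quoted from Jurdjevic--Kupka \cite{JK, JK1}, and your three-step argument (time reparametrization for the scalings $\lambda V$, a chattering/Trotter product approximation for convex combinations with the total-time bookkeeping $s\sum_i\mu_i\le s$, and continuous dependence of flows on the vector field for the closure) is essentially the standard proof given in those references. The only points to handle with care in a full write-up are the ones you already flag implicitly: the splitting approximation should be localized to a compact neighborhood of the limiting trajectory (the fields need not be complete), and when concatenating approximate flows one also needs continuity in the initial condition, both of which are routine.
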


With these ideas in place, we now prove Lemma \ref{lem2} \textbf{(2)}.  
\begin{proof}[Proof of Lemma \ref{lem2} \textbf{(2)}]
Set $F= \{ Z+ uW_{1}+ vW_{2} \, : \, u,v \in \mathcal{U}\}$.  By Lemma \ref{conelem}, for all constants $\lambda \in \mathbb{R}$:
\begin{eqnarray*}
 \lambda W_{1}&= &\lim_{n\rightarrow \infty} \frac{1}{n}\left(Z+ n \lambda W_{1} \right)\in \text{Sat}(F)\\
\lambda W_{2}&=& \lim_{n\rightarrow \infty} \frac{1}{n}\left(Z+ n \lambda W_{2} \right)\in \text{Sat}(F).
\end{eqnarray*}
It is easy to see that $z\mapsto \exp(\lambda W_{i})(z) \in \text{Norm}(F)$ for $\lambda \in \mathbb{R}$, $i=1,2$.  Since $\kappa_{2}>0$ Lemma \ref{normlem} implies:
\begin{equation*}
\exp(\lambda W_{2})_{*}(Z)= Z + \lambda \left[W_{2}, Z \right] + \frac{\lambda^{2}}{2}\left[ W_{2}\left[W_{2}, Z \right]\right]\in \text{Sat}(F).  
\end{equation*}  
for all $\lambda \in \mathbb{R}$.  Applying Lemma \ref{conelem} with $\mu \in \mathbb{R}$, we see that:
\begin{equation*}
\lim_{\lambda \rightarrow \infty}\frac{1}{\lambda^{2}}(\exp(\lambda |2\mu|^{1/2} W_{2})_{*}(Z))= |\mu| \left[ W_{2}\left[W_{2}, Z \right]\right] \in \text{Sat}(F).  
\end{equation*}
Moreover, $|\mu| \left[ W_{2}\left[W_{2}, Z \right]\right](z) =( |\mu| \kappa_{2}^{2},0).$  Using this vector field with $W_{2}$, we see that for $z_{0} \in \mathbb{R}^{2}$ and $t>0$:  
\begin{equation*}
\overline{A_{F}(z_{0}, \leq t)}\supset H(z_{0}),
\end{equation*}
where for $z=(x,y)$, $H(z) = \{w=(u,v)\in \mathbb{R}^{2}\, : \, u \geq x\}.$  This finishes the proof of Lemma \ref{quadcont} \textbf{(2)} since for all $z_0,\, w_0\in \mathbb{R}^2$
\begin{equation*}
\overline{A(z_0)}\cap \overline{A(w_0)}\supset U(z_0, w_0) \neq \emptyset,
\end{equation*}
for some $U(z_0, w_0)$ open.  

\end{proof}

 
 

\section{Instabilities}\label{explosion}

We assume now that $\alpha_{1}>\alpha_{2}$ and prove Theorem \ref{theorem2}.  Similar to Section \ref{lyapunov}, this will be established by constructing an appropriate test function $\Psi$, as in the hypotheses of Lemma \ref{psiexp}.  Before we proceed on to the construction of $\Psi$, we first discuss the deterministic dynamics under the assumption $\alpha_{1}>\alpha_{2}$.     

\subsection{A Robust Explosive Region}

It is not hard to see that when $\alpha_{2}>\alpha_{1}$ as in Section \ref{lyapunov}, the deterministic dynamics
\begin{eqnarray}\label{detdyn}
\dot{x}(t)&=& a_{1}x(t)-\alpha_{1} x(t)^{2}+y(t)^{2}\\
\dot{y}(t)&=& a_{2}y(t)-\alpha_{2}x(t)y(t)\nonumber 
\end{eqnarray}
has a single unstable trajectory along the negative $x$-axis.  When $\alpha_{1}>\alpha_{2}$, we prove this instability in equation \eqref{detdyn} is more robust.  Remaining consistent with the earlier notation, we let $z(t)=(x(t), y(t))$ be the solution to equation \eqref{detdyn}, defined on the maximal interval of times $t>0$ on which it exists.  The initial condition will always be made clear in the following arguments.  We first need a definition.
\begin{definition}
We call a set $U\subset \mathbb{R}^{2}$ \emph{invariant} if for all $z(0)=(x_{0},y_{0})\in U$, $z(t)\in U$ for all times $t\geq 0$ for which the solution $z(t)$ is defined.    
\end{definition}          
Using Lyapunov-type criteria, we will determine an invariant set $A^{0}\subset \mathbb{R}^{2}$ for which solutions starting at $z_{0}\in A^{0}$ escape to infinity in finite time.  To this end, we recall that the infinitesimal operator for \eqref{detdyn} has the form:
\begin{equation*}
L^{0}=( a_{1}x-\alpha_{1} x^{2}+y^{2}) \frac{\partial}{\partial x}+ ( a_{2}y -\alpha_{2} xy) \frac{\partial}{\partial y}.  
\end{equation*}
For $\xi, M>0$, define functions $\psi_{1}, \psi_{2}, \psi_{3}: \mathbb{R}^{2}\rightarrow \mathbb{R}$ by:
\begin{eqnarray*}
\psi_{1}(x,y)&=& -x\\
\psi_{2}(x,y)&=& \xi y-x\\
\psi_{3}(x,y)&=& -\xi y-x, 
\end{eqnarray*}
and let 
\begin{equation*}
U_{\xi, M}= \psi_{1}^{-1}((M, \infty)) \cap \psi_{2}^{-1}((0, \infty))\cap \psi_{3}^{-1}((0, \infty)). 
\end{equation*}

\begin{proposition}
For all $\xi>0$ such that $\xi^{-2}< \alpha_{1}-\alpha_{2}$, there exists $M>0$ such that $U_{\xi, M}$ is invariant.     
\end{proposition}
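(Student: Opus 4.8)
The plan is to show $U_{\xi,M}$ is (positively) invariant by a Nagumo‑type boundary argument: for $M$ large enough the vector field of \eqref{detdyn} points strictly into $U_{\xi,M}$ along each of the three smooth pieces of $\partial U_{\xi,M}$, so no integral curve starting inside can ever touch the boundary. Since $U_{\xi,M}=\psi_1^{-1}((M,\infty))\cap\psi_2^{-1}((0,\infty))\cap\psi_3^{-1}((0,\infty))$, its boundary decomposes (up to corner points) into the faces $\{\psi_1=M\}$, $\{\psi_2=0\}$, $\{\psi_3=0\}$, each intersected with $\overline{U_{\xi,M}}$, and it suffices to prove $L^0\psi_1>0$ on $\{\psi_1=M\}\cap\overline{U_{\xi,M}}$ and $L^0\psi_i>0$ on $\{\psi_i=0\}\cap\overline{U_{\xi,M}}$ for $i=2,3$. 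I will first record that $\overline{U_{\xi,M}}=\{\,x\le -M,\ |y|\le |x|/\xi\,\}$.

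On $\overline{U_{\xi,M}}$ we have $x<0$, so
\[
L^0\psi_1(x,y)=-a_1 x+\alpha_1 x^2-y^2\ \ge\ -a_1 x+\bigl(\alpha_1-\xi^{-2}\bigr)x^2 ,
\]
and since $\alpha_1-\xi^{-2}>\alpha_1-\alpha_2-\xi^{-2}+\alpha_2>0$, the right‑hand side is a quadratic in $|x|$ with positive leading coefficient, hence strictly positive for $|x|$ past a threshold depending only on $a_1,\alpha_1,\xi$. The key computation is on the slanted faces: on $\{\psi_2=0\}$ one has $y=x/\xi$, and substituting into $L^0\psi_2=\xi(a_2y-\alpha_2 xy)-(a_1 x-\alpha_1 x^2+y^2)$ and cancelling gives
\[
L^0\psi_2\bigl(x,x/\xi\bigr)=(a_2-a_1)x+\bigl(\alpha_1-\alpha_2-\xi^{-2}\bigr)x^2 ,
\]
and the symmetry $y\mapsto -y$ of \eqref{detdyn} yields the identical expression for $L^0\psi_3$ on $\{\psi_3=0\}$. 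This is exactly where the hypothesis $\xi^{-2}<\alpha_1-\alpha_2$ is used: it forces the coefficient of $x^2$ to be strictly positive, so these expressions are again positive once $|x|$ exceeds a threshold depending only on $a_1,a_2,\alpha_1,\alpha_2,\xi$. Choosing $M$ strictly larger than the maximum of these finitely many thresholds makes all three inward‑pointing inequalities hold on $\partial U_{\xi,M}$.

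To conclude invariance, fix $z(0)\in U_{\xi,M}$, let $[0,T)$ be the maximal forward existence interval, and set $t^*=\sup\{t\in[0,T):z(s)\in U_{\xi,M}\text{ for all }s\in[0,t]\}$, which is positive since $U_{\xi,M}$ is open. Suppose toward a contradiction that $t^*<T$. Then $z(s)\in U_{\xi,M}$ for $s<t^*$, while $z(t^*)\in\overline{U_{\xi,M}}$ by continuity and $z(t^*)\notin U_{\xi,M}$, so $z(t^*)\in\partial U_{\xi,M}$; hence at least one of $\psi_1(z(t^*))=M$, $\psi_2(z(t^*))=0$, $\psi_3(z(t^*))=0$ holds. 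If, say, $\psi_1(z(t^*))=M$, then $\tfrac{d}{dt}\psi_1(z(t))\big|_{t=t^*}=L^0\psi_1(z(t^*))>0$ by the estimates above, so $\psi_1(z(t))<M$ for $t$ slightly below $t^*$, contradicting $z(t)\in U_{\xi,M}$ there; the other two cases are identical. Since each $\psi_i$ is treated separately, corner points cause no difficulty. Therefore $t^*=T$, i.e. $z(t)\in U_{\xi,M}$ for every $t$ at which it is defined, which is the claimed invariance; note the argument neither asserts nor needs global existence, consistent with the fact that trajectories in this region are the ones that will explode in finite time.

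The only mildly delicate points are the bookkeeping of the several $M$‑thresholds and the treatment of the first‑exit time at the corners of $\partial U_{\xi,M}$, both routine. The genuine content is the algebraic observation that the cone‑angle condition $\xi^{-2}<\alpha_1-\alpha_2$ is exactly the sign condition making the quadratic leading term of $L^0\psi_2$ (hence of $L^0\psi_3$) push trajectories away from the slanted walls; I anticipate no serious obstacle beyond isolating that.
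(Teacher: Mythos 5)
Your proof is correct and follows essentially the same approach as the paper: show that $L^0\psi_j>0$ on the relevant region for $M$ large (the paper verifies this on all of $U_{\xi,M}$; you verify it only on the boundary faces, which is also sufficient), then invoke a monotonicity/Nagumo argument to conclude trajectories cannot cross $\partial U_{\xi,M}$. The only cosmetic difference is that the paper integrates $L^0\psi_j$ along the trajectory to get strict monotonicity of each $\psi_j\circ z$ on $[0,S)$, whereas you argue locally at the hypothetical first exit time $t^*$; both are standard and equivalent here.
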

\begin{proof}
Since $0<\xi^{-2}< \alpha_{1}-\alpha_{2}$, we may choose $M>0$ such that for all $j=1,2,3$:
\begin{equation*}
L^{0} \psi_{j}(x,y) > 0,
\end{equation*} 
for all $(x,y) \in U_{\xi, M}$.  Fix $z(0)=(x_{0}, y_{0})\in U_{\xi, M}$ and let
\begin{equation*}
S=\inf_{t>0}\{z(t) \in \mathbb{R}^{2}\setminus U_{\xi, M} \}.
\end{equation*}
We have for $t_{1} < t_{2}$, $t_{1}, t_{2} \in [0, S)$:
\begin{equation*}
-x(t_{2})+x(t_{1}) =\psi_{1}(z(t_{2}))-\psi_{1}(z(t_{1}))=\int_{t_{1}}^{t_{2}} L^{0} \psi_{1}(z(u)) \, du >0.
\end{equation*}
Thus $x(t)$ is strictly decreasing on $[0, S)$.  By continuity, $z(t)$ cannot exit $U_{\xi, M}$ through the vertical line $x=M$.  Using similar reasoning, we see that $\psi_{2}(z(t))$ and $\psi_{3}(z(t))$ are strictly increasing on $[0, S)$.  This implies that $z(t)$ cannot exit $U_{\xi, M}$ through the lines $|x|=\xi |y|$.           
\end{proof}

\begin{figure}
\centering
\caption{The region $U_{1, 15}$ and approximate dynamics when $\alpha_{1}>\alpha_{2}$.}\includegraphics[width=4in]{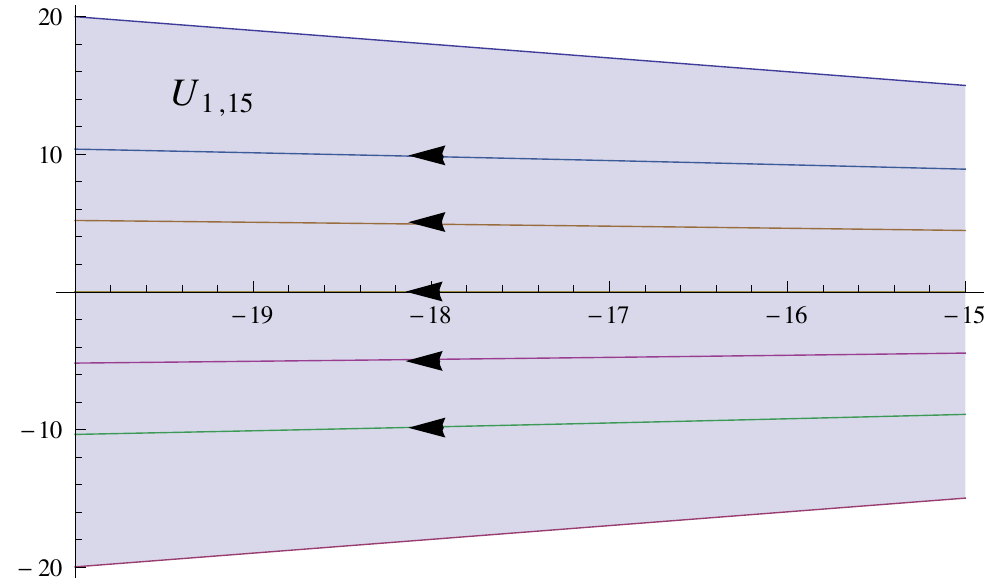}
\end{figure}

For $\xi>0$ such that $\xi^{-2}<\alpha_{1}-\alpha_{2}$, choose $M>0$ so that the conclusion of the preceding proposition is satisfied and 
\begin{equation}\label{ldetbound}
L^{0} \psi_{1}(x,y) \geq C [\psi_{1}(x,y)]^{2},
\end{equation}
for all $(x,y) \in U_{\xi, M}$ for some $C>0$.  Let $A^{0}=U_{\xi, M}$.  We now have the following proposition, illustrated by Figure 5.6:   
\begin{proposition}
For all $z(0)=(x_{0}, y_{0}) \in A^{0}$, $z(t)$ reaches infinity in finite time $T>0$ where 
\begin{equation*}
T \leq \frac{1}{C|x_{0}|}.
\end{equation*}  
\end{proposition}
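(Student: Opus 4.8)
The plan is to reduce the statement to a scalar differential inequality for the function $\psi_1$ evaluated along the trajectory, and then integrate it. Fix $z(0)=(x_0,y_0)\in A^0=U_{\xi,M}$ and let $[0,T_{\max})$ be the maximal interval of existence of $z(t)$. Since $A^0$ is invariant (by the preceding proposition), $z(t)\in A^0$ for every $t\in[0,T_{\max})$; in particular $u(t):=\psi_1(z(t))=-x(t)>M>0$ on this whole interval. Differentiating along the flow of \eqref{detdyn} and using the bound \eqref{ldetbound}, which is valid precisely because the trajectory never leaves $A^0$, we get
\[
\dot u(t)=L^0\psi_1(z(t))\ \geq\ C\,[\psi_1(z(t))]^2\ =\ C\,u(t)^2,\qquad t\in[0,T_{\max}).
\]

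Next I would integrate this Riccati-type inequality. Since $u(t)>0$, dividing by $u(t)^2$ gives $\tfrac{d}{dt}\bigl(u(t)^{-1}\bigr)=-\dot u(t)/u(t)^2\leq -C$, and integrating from $0$ to $t$ yields $u(t)^{-1}\leq u(0)^{-1}-Ct=|x_0|^{-1}-Ct$ for all $t\in[0,T_{\max})$; equivalently $u(t)\geq |x_0|\bigl(1-C|x_0|t\bigr)^{-1}$ as long as the right-hand side is positive. If $T_{\max}$ were strictly larger than $\tfrac{1}{C|x_0|}$, then letting $t\uparrow \tfrac{1}{C|x_0|}$ would force $u(t)\to\infty$, and in particular $|z(t)|\geq |x(t)|=u(t)\to\infty$, so the solution could not be defined at $t=\tfrac{1}{C|x_0|}$. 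Hence $T_{\max}\leq \tfrac{1}{C|x_0|}$.

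Finally, because the right-hand side of \eqref{detdyn} is a polynomial vector field, hence smooth on all of $\mathbb{R}^2$, a finite maximal existence time $T_{\max}<\infty$ forces $|z(t)|\to\infty$ as $t\uparrow T_{\max}$ (the solution cannot remain in any compact set). Therefore $z(t)$ reaches infinity at a finite time $T=T_{\max}\leq \tfrac{1}{C|x_0|}$, which is the claim.

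I do not expect a serious obstacle here: the argument is the elementary integration of $\dot u\geq Cu^2$. The only two points needing care are (i) confirming that \eqref{ldetbound} is genuinely available along the entire trajectory — supplied exactly by the invariance of $A^0$ established in the previous proposition — and (ii) invoking the standard fact that a smooth planar ODE with finite blow-up time must have $|z(t)|\to\infty$ there.
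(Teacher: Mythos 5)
Your proof is correct and follows essentially the same route as the paper: both arguments use the invariance of $A^{0}$ together with the bound \eqref{ldetbound} to obtain the scalar inequality $-\dot{x}\geq C x^{2}$ along the trajectory and then integrate it to get $T\leq \frac{1}{C|x_{0}|}$. The only cosmetic difference is that the paper integrates via the change of variables $t=x^{-1}(x(t))-x^{-1}(x(0))$ (and phrases blow-up through the exit times $T_{n}$ from balls $B_{n}$), whereas you integrate $\frac{d}{dt}\bigl(u(t)^{-1}\bigr)\leq -C$ directly and invoke the standard escape-to-infinity lemma for the maximal solution.
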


\begin{proof}
Fix $z(0)=(x_{0}, y_{0})\in A^{0}$ and let $T_{n}=\inf_{t>0} \{z(t) \in B_{n}^{c}\}$ where $B_{n}$ is the open ball of radius $n$ centered at the origin.  Let $T$ be the finite or infinite limit of $T_{n}$ as $n\rightarrow \infty$.  By the previous proposition, $z(t) \in A^{0}=U_{\xi, M}$ for all $t\in [0, T)$.  Fix $t\in [0, T)$.  By the proof of the previous proposition, the map $s\mapsto x(s)$ is strictly decreasing and smooth.  As such, it is a diffeomorphism onto its image.  Using $x^{-1}$ to denote its smooth inverse, we have:
\begin{eqnarray}\label{ineqdet}
t&=& x^{-1}( x(t)) - x^{-1}(x(0))\nonumber\\
&=& \int_{x(0)}^{x(t)}\frac{d}{d y} x^{-1}(y) \, dy\nonumber \\
&=& \int_{x(0)}^{x(t)}\frac{1}{ \dot{x}(x^{-1}(y))}\, dy \nonumber\\
&=&\int_{x(t)}^{x(0)} \frac{1}{-\dot{x}(x^{-1}(y))}\, dy \nonumber\\
&< & \int_{x(t)}^{x(0)} \frac{1}{C (x(x^{-1}(y))^{2}}\, dy  \\
&=& \frac{1}{C}\left[ \frac{1}{|x_{0}|}-\frac{1}{|x(t)|}\right]\nonumber\\
&\leq &\frac{1}{C|x_{0}|}\nonumber,
\end{eqnarray}
where \eqref{ineqdet} follows from \eqref{ldetbound}.  Since $t\in [0, T)$ was arbitrary, we see that 
\begin{equation*}
T \leq \frac{1}{C|x_{0}|}.  
\end{equation*}

\end{proof}

\subsection{The Random Dynamics}  The robustness of the explosive region $U_{\xi, M}$ suggests that noise will not be sufficient to stabilize the system, a fact which we will now prove.  Let $q:(-\infty,0)\times\mathbb{R}\rightarrow\mathbb{R}$, \(f:\mathbb{R}\rightarrow \mathbb{R}\), \(\rho:(-\infty,0)\times\mathbb{R}\rightarrow\mathbb{R}\), and \(h:\mathbb{R}^2\rightarrow \mathbb{R}\) by
\begin{displaymath}  
q(x,y)=\frac{\xi y}{x},\hspace{3mm} f(t) = \left\{     
\begin{array}{lr}      
 \exp(\frac{-1}{1-t^2}) & \text{if } t \in [-1,1]\\      
 0 &\text{if } t \notin [-1,1]   
  \end{array}   
\right. ,\end{displaymath}  
\begin{displaymath}  
\rho=f\circ q, \text{ and } h(x,y) = \left\{     
\begin{array}{lr}      
 \exp(\frac{1}{x+M}) & \text{if } x<-M\\      
 0 &\text{if } x>-M   
  \end{array}   
\right. .\end{displaymath}  
Finally, let \(g=h\rho\) (the product of $h$ and $\rho$) and note that \(g\) is smooth on \(\mathbb{R}^2\), non-negative, bounded, and is supported in \(\overline{U_{\xi, M}}\).  Moreover, $g$ is strictly positive on $U_{\xi, M}$.  We will use \(g\) to show that the system in not regularized by noise in the specified parameter range.

\begin{proposition}
For all \(\xi>0\) such that \((\alpha_1-\alpha_2)/2<\xi^{-2}<\alpha_1-\alpha_2\) there exist constants \(M,C>0\) such that \(Lg\geq Cg\) on \(\mathbb{R}^2\) and $g$ is strictly positive on $U_{\xi, M}$.  If \(a_1\geq a_2\) then \(M\) can be chosen independent of \(\xi\).
\end{proposition}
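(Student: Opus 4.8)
The plan is to reduce $Lg\geq Cg$ to a pointwise estimate on the open set $U_{\xi,M}$. Outside $\overline{U_{\xi,M}}$ both $g$ and $Lg$ vanish identically (the excerpt already notes that $g\in C^{\infty}(\mathbb{R}^{2})$ and is supported in $\overline{U_{\xi,M}}$), and strict positivity of $g$ on $U_{\xi,M}$ is immediate since $h>0$ for $x<-M$ and $\rho=f\circ q>0$ for $|q|<1$, while $U_{\xi,M}=\{x<-M\}\cap\{|q|<1\}$ with $q(x,y)=\xi y/x$. On $U_{\xi,M}$ I would write $g=h\rho$ and use that the first-order part of $L$ is a derivation, that the second-order part produces a cross term, and that $h$ does not depend on $y$, to get
\[
Lg=\rho\,Lh+h\,L\rho+2\kappa_{1}\,\partial_{x}h\,\partial_{x}\rho .
\]
Dividing by $g=h\rho>0$, it then suffices to show
\[
\frac{Lh}{h}+\frac{L\rho}{\rho}+2\kappa_{1}\,\frac{\partial_{x}h}{h}\,\frac{\partial_{x}\rho}{\rho}\;\geq\;C>0 \qquad \text{on } U_{\xi,M}.
\]

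For the first term, a direct computation gives
\[
\frac{Lh}{h}=\frac{\alpha_{1}x^{2}-y^{2}-a_{1}x}{(x+M)^{2}}+\kappa_{1}\Big(\tfrac{1}{(x+M)^{4}}+\tfrac{2}{(x+M)^{3}}\Big).
\]
On $U_{\xi,M}$ one has $y^{2}<x^{2}/\xi^{2}$ and, by hypothesis, $\xi^{-2}<\alpha_{1}-\alpha_{2}<\alpha_{1}$, so the quotient is at least $\big((\alpha_{1}-\xi^{-2})x^{2}-a_{1}x\big)/(x+M)^{2}$, which for $M$ large is bounded below by a positive constant $c_{0}$ (with $c_{0}>\alpha_{2}$, uniformly in $\xi$) on all of $\{x<-M\}$, and blows up to $+\infty$ as $x\to-M^{-}$ (from this quotient when $\kappa_{1}=0$, or from $\kappa_{1}(x+M)^{-4}$ when $\kappa_{1}>0$). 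Hence $Lh/h\geq c_{0}>0$ on $U_{\xi,M}$.

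The main work is the $\rho$-term, which I would organize around $q=\xi y/x$. One checks that the first-order part of $L$ applied to $q$ equals $q(a_{2}-a_{1})+qx\big((\alpha_{1}-\alpha_{2})-\xi^{-2}q^{2}\big)$; since $x<0$ and, because $|q|<1$ and $\xi^{-2}<\alpha_{1}-\alpha_{2}$, the bracket is positive, this is a restoring drift pushing $q$ toward $0$, i.e.\ toward the explosive negative $x$-axis. With $f'(q)/\rho=-2q/(1-q^{2})^{2}$ and $f''(q)/\rho=(6q^{4}-2)/(1-q^{2})^{4}$ this yields
\[
\frac{L\rho}{\rho}=-\frac{2q^{2}}{(1-q^{2})^{2}}\Big((a_{2}-a_{1})+x\big((\alpha_{1}-\alpha_{2})-\xi^{-2}q^{2}\big)\Big)+\frac{1}{x^{2}}\,Q(q),
\]
where $Q$ gathers the second-order-in-$q$ contributions (weighted by $\kappa_{1}$ and by $\kappa_{2}\xi^{2}$), bounded for $|q|$ away from $1$ and $\to+\infty$ as $|q|\to1$. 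The first summand is $\geq0$ exactly when $(a_{2}-a_{1})+x\big((\alpha_{1}-\alpha_{2})-\xi^{-2}q^{2}\big)\leq0$: when $a_{1}\geq a_{2}$ this holds for every $x<0$ and every $|q|<1$ (this is the point at which $M$ need not depend on $\xi$), and otherwise it holds once $M>(a_{2}-a_{1})/\big((\alpha_{1}-\alpha_{2})-\xi^{-2}\big)$, the lone place where $M$ must grow with $\xi$; moreover the first summand $\to+\infty$ as $|q|\to1$, as does the leading part of $x^{-2}Q(q)$. Finally the cross term is harmless, since $2\kappa_{1}\frac{\partial_{x}h}{h}\frac{\partial_{x}\rho}{\rho}=-\frac{4\kappa_{1}q^{2}}{x(x+M)^{2}(1-q^{2})^{2}}\geq0$ because $x<0$.

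Combining, on the region where $|q|$ is bounded away from $1$ we get $Lg/g\geq c_{0}-K/M^{2}$, with $K$ depending on $\kappa_{1},\kappa_{2}$ and on $\xi$ only through the bounded quantity $\kappa_{2}\xi^{2}$ (bounded because the hypothesis also forces $\xi^{2}<2/(\alpha_{1}-\alpha_{2})$); taking $M$ large gives $Lg/g\geq c_{0}/2=:C$, and near $|q|=1$ the bound holds automatically since $Lg/g\to+\infty$. I expect the main obstacle to be precisely this uniformity near the two pieces of $\partial U_{\xi,M}$ — the line $x=-M$ and the rays $|q|=1$ — where $g$ and all its derivatives vanish to infinite order, so that $Lg/g$ is a quotient of two vanishing quantities; one must verify that the divergences all point in the favorable direction and dominate the bounded but sign-indefinite pieces ($6q^{4}-2<0$ for $|q|<3^{-1/4}$, so $Q$ changes sign). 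It is in making these estimates uniform over the allowed window $(\alpha_{1}-\alpha_{2})/2<\xi^{-2}<\alpha_{1}-\alpha_{2}$ — strictly narrower than the window $\xi^{-2}<\alpha_{1}-\alpha_{2}$ that sufficed for the invariance of $U_{\xi,M}$ — that the extra lower bound on $\xi^{-2}$ enters, notably through the boundedness of $\xi^{2}$ it provides. The resulting $g$ and $C$ are then exactly the data required by Lemma \ref{psiexp}.
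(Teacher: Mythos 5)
Your proposal is correct and follows essentially the same route as the paper: the same function $g=h\rho$, the same computation of $Lh$, $\partial_x\rho$, $\partial_{xx}\rho$, $\partial_{yy}\rho$, the same use of $\xi^{-2}<\alpha_1-\alpha_2$ (via the restoring drift for $q$) to make the first-order $\rho$-terms favorable, and the same use of $\xi^{-2}>(\alpha_1-\alpha_2)/2$ to bound $\kappa_2\xi^2$ and hence the $\partial_{yy}\rho$ term uniformly in $\xi$. The only differences are organizational: you divide by $g$ first and work with $Lg/g=Lh/h+L\rho/\rho+2\kappa_1(\partial_xh/h)(\partial_x\rho/\rho)$, whereas the paper expands $Lg$ directly and disposes of the sign-indefinite second-order $\rho$-contributions with a single pair of global constants $D_1,D_2$ bounding $-2q^2(q^4+4q^2-3)/(1-q^2)^4$ and $-2(3q^4-1)/(1-q^2)^4$ on $(-1,1)$, which replaces and simplifies your two-region split near and away from $|q|=1$ (and also makes your parenthetical claim $c_0>\alpha_2$ unnecessary; the argument only needs $c_0$ bounded away from $0$ uniformly in $\xi$).
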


\begin{proof}
First note that the function $h$ only depends on $x$; hence, applying $L$ to $g$ we obtain:  
\begin{eqnarray*}
Lg&=&\rho Lh+(a_1x-\alpha_1x^2+y^2)h\partial_x\rho+(a_2y-\alpha_2xy)h\partial_y\rho+\kappa_1h\partial_{xx}\rho\\
&\,&\,+\, \kappa_2h\partial_{yy}\rho+2\kappa_1\partial_x\rho\partial_xh.
\end{eqnarray*}
To estimate $Lg$, we find it convenient to use the following expressions for the derivatives above:
\begin{align*}
&Lh=\left(\frac{\alpha_1x^2-a_1x-y^2}{(x+M)^2}+\kappa_1\frac{2(x+M)+1}{(x+M)^4}\right)h,\\
&\partial_xh=\frac{-1}{(x+M)^2}h, \hspace{5mm}\partial_x\rho=\frac{2q^2}{x(1-q^2)^2}\rho,\hspace{6mm} \partial_y\rho=\frac{-2q^2}{y(1-q^2)^2}\rho, \notag\\
&\partial_{xx}\rho=\frac{2q^2(q^4+4q^2-3)}{x^2(1-q^2)^4}\rho,\hspace{5mm} \partial_{yy}\rho=\frac{2\xi^2(3q^4-1)}{x^2(1-q^2)^4}\rho,
\end{align*}
all of which follow from the definitions of $h$ and $q$.  Let \(D_1>0\) and \(D_2>0\) be upper bounds for 
\[-\frac{2t^2(t^4+4t^2-3)}{(1-t^2)^4}\text{ and } -\frac{2(3t^4-1)}{(1-t^2)^4}\]
 respectively for \(t\in (-1,1)\).  Therefore, since \(\partial_x\rho\partial_xh\geq0\) and $\xi^{-2} >(\alpha_{1}-\alpha_{2})/2$, we have the following estimate for the second-order terms
 \begin{eqnarray*}
 \kappa_1h\partial_{xx}\rho+\kappa_2h\partial_{yy}\rho+ 2\kappa_{1} \partial_{x} \rho \partial_{x} h&\geq&-(\kappa_1D_1+\kappa_2D_2\xi^2)x^{-2}g \\
 &\geq&-\left(\kappa_1D_1+\frac{2\kappa_2D_2}{\alpha_1-\alpha_2}\right)x^{-2}g,
 \end{eqnarray*}
 on $U_{\xi, M}$.  
To handle the remaining terms we first recall that \(y^2x^{-2}\leq\beta^{-2}\) on \(U_{\xi,M}\).  Hence,
\begin{eqnarray*}
(a_{1}x-\alpha_1x^2+y^2)h\partial_x\rho+ (a_{2}y-\alpha_2xy)h\partial_y\rho &\geq& g\frac{2q^{2}|x|}{(1-q^{2})^{2}}\bigg(\alpha_1-\alpha_2\\
&\, & \,-\, \xi^{-2}+|x|^{-1}(a_{1}-a_{2})\bigg),
\end{eqnarray*}
and
\begin{equation*}
\rho Lh\geq \left(\frac{(\alpha_1-\xi^{-2})x^2-a_1x}{(x+M)^2}+\frac{2\kappa_1(x+M)+1}{(x+M)^4}\right)g.
\end{equation*}
Let \(D_3>0\) be an upper bound for 
\begin{equation*}
-\frac{2\kappa_1(x+M)+1}{(x+M)^2}
\end{equation*}
on \((-\infty,-M)\).  Therefore
\begin{equation*}
\rho Lh\geq\left(\alpha_2-|a_1|M^{-1}-D_3M^{-2}\right)\frac{x^2}{(x+M)^2}g.
\end{equation*}
Together these estimates imply:
\begin{align*}
Lg\geq&\left[\left(\alpha_2-|a_1|M^{-1}-D_3M^{-2}-\left(\kappa_1D_1+\frac{2\kappa_2D_2}{\alpha_1-\alpha_2}\right)M^{-2}(1+M/x)^2\right)\frac{x^2}{(x+M)^2} \right.\\
&\left. +\frac{2|x|q^2(\alpha_1-\alpha_2-\xi^{-2}+|x|^{-1}(a_1-a_2))}{(1-q^2)^2}\right]g.
\end{align*}
For all \(\eta>1\) and $M$ sufficiently large (the choice of which does not depend on $\xi$):
\begin{align*}
Lg\geq&\left(\frac{\alpha_2}{\eta (1+M/x)^2}+\frac{2|x|q^2(\alpha_1-\alpha_2-\xi^{-2}+(a_1-a_2)|x|^{-1})}{(1-q^2)^2}\right)g
\end{align*}
for all \(\xi\) satisfying \(\alpha_1-\alpha_2>\xi^{-2}>(\alpha_1-\alpha_2)/2\).  If $a_{1}\geq a_{2}$ the second term on the right is nonnegative; thus we have proved the proposition when $a_{1}\geq a_{2}$.  If $a_{2}>a_{1}$, we have
\begin{align*}
Lg\geq&\left(C_M+\frac{2y^2\xi^2(\alpha_1-\alpha_2-\xi^{-2}-|a_2-a_1|M^{-1})}{|x|(1-q^2)^{2}}\right)g,
\end{align*}
for some constant $C_{M}>0$.  For a given \(\xi\) satisfying \(\xi^{-2}<\alpha_1-\alpha_2\) the second term is positive for sufficiently large \(M\), thereby proving the proposition in this case as well.
\end{proof}

\section{Conclusions}

We saw that in equation \eqref{initial_system}, $\alpha_{1}=\alpha_{2}$ represents a critical barrier for ergodicity.  In particular, it was shown that if $\alpha_{2}>\alpha_{1}$, there exists a unique invariant probability measure.  This result was essentially proven by the existence of a globally-defined Lyapunov function $\Phi$ such that $L\Phi$ satisfies the strong bound \eqref{boundL}.  The function $\Phi$ was constructed by first exhibiting a strong Lyapunov covering $\{ (\phi_{i}, V_{i})\}_{i=1}^{5}$, the existence of which, at the very least, guarantees that $Z_{t}$ cannot exit $\mathbb{R}^{2}$ directly through any $V_{j}$.  To eliminate the (highly unlikely) possibility of other explosion scenarios, the covering was patched together to yield $\Phi$.          

In the case when $\alpha_{2}>\alpha_{1}$ and $\kappa_{1}>0$, we were able to quantify the rate of convergence to the steady state.  As we saw, convergence is exponentially fast in a norm stronger than the total variation norm on $\mathcal{B}$-measures.  If $\kappa_{1}=0$, ergodicity still remains true in this case, but it is much harder to quantify the rate of convergence.  This should be expected, since there is less noise in the system.  Methods developed in \cite{AK, HOR, JK, JK1,SV} proved useful in showing ergodicity in this situation.

In the case when $\alpha_{1}>\alpha_{2}$, we saw that ergodicity cannot even be discussed, since global stability is not satisfied.  This is because solutions starting in a wedge-like region containing the negative $X$-axis explode in finite time with positive probability.  We believe that this is primarily due to the presence of a more robust explosive region in the deterministic dynamics.       

\section{Acknowledgements}

We were introduced to the general circle of problems this paper addresses by K. Gaw\c edzki.  We thank M. Hairer for suggesting \cite{JK1} and J. Zabczyk for a reference to \cite{SM}.  J. W. was partially supported by the NSF grant DMS 1009508.  D. H. gratefully acknowledges support from NSF VIGRE grant through the
Mathematics Graduate Program at the University of Arizona.
\newpage

\section{Appendix}

In this section, we list the parameters introduced in Stage 1 of Section \ref{lyapunov} along with their chosen value.  Recall that $\alpha_{2}>\alpha_{1}>0$ are fixed constants.      
\begin{eqnarray*}
\sigma&=& \frac{\alpha_{1}+\alpha_{2}}{2\alpha_{2}}\in ( \alpha_{1}/\alpha_{2}, 1)\\
\delta& =& \frac{\kappa_{2}}{8\alpha_{1}}\\
\beta &=& (2+\sigma) \delta=\frac{(\alpha_{1}+3\alpha_{2})}{16 \alpha_{1} \alpha_{2}}\kappa_{2}\\
\gamma&=&(1-\sigma) \delta =  \frac{(\alpha_{2}-\alpha_{1})}{16 \alpha_{1} \alpha_{2}}\kappa_{2}\\
\eta&=&\frac{\alpha_2}{2}\\
D&=& 1 + \frac{(\alpha_{2} + 2|a_{2}|)(\alpha_{2}-\alpha_{1})}{\alpha_{2}}> \max(1, (\alpha_{2}+2|a_{2}|)(1-\sigma))\\
N&=& 1+ \frac{4|a_{2}|}{\alpha_{2}}> \max(1, 4|a_{2}|/\alpha_{2})\\
C_{1} &=& 2\\
C_{2}&=& 1\\ 
C_{3}&=& \frac{1}{2D^{\delta}}\\
C_{4}&=&\frac{1}{3ND^{\delta}}\\
C_{5}&=& \frac{1}{4D^{\delta}}.
\end{eqnarray*}
With these choices, choose $\kappa_{2}=\kappa_{2}(\boldsymbol{\alpha})>0$ such that:
\begin{eqnarray*}
&\,&\delta,\, \gamma \in(0,1/2),\\
&\,&(\alpha_{1}-\sigma \alpha_{2}) + \kappa_{2}\sigma( 2 \sigma \delta +1) <0,\\
&\,&\frac{E_{3} \kappa_{2}}{h(q(x,y))}\leq E_{2}/2 \text{ for all } |x|^{1/2}|y| \in [1+\epsilon, 2-\epsilon],
\end{eqnarray*}
where $\epsilon, E_{2}, E_{3}>0$ and $h(q(x,y))$ were introduced in Patch 1 of Stage 2 and are all independent of $\boldsymbol{\kappa}$ and $\boldsymbol{a}$.  It is easy to see that $\kappa_{2}$ only depends on $\boldsymbol{\alpha}$ which is permissible by Proposition \ref{vary_kappa_lemma}.

\end{document}